\newcommand{\C}{\mathbb{C}}
\newcommand{\ZZ}{\mathbb{Z}}
\newcommand{\QQ}{\mathbb{Q}}
\newcommand{\NN}{\mathbb{N}}
\newcommand{\PP}{\mathbb{P}}
\newcommand{\Sy}{\mathfrak S}
\newcommand{\CH}{\operatorname{CH}}
\newcommand{\h}{\mathfrak{h}}
\newcommand{\MM}{\mathcal M}
\DeclareMathOperator{\ide}{id}
\DeclareMathOperator{\ima}{Im}
\newtheorem{theorem}{Theorem}[section]
\newtheorem{claim}[theorem]{Claim}
\newtheorem{lemma}[theorem]{Lemma}
\newtheorem{corollary}[theorem]{Corollary}
\newtheorem{proposition}[theorem]{Proposition}
\newtheorem{conjecture}[theorem]{Conjecture}
\newtheorem{nonumbering}{Theorem}
\newtheorem{nonumberingc}{Corollary}
\newtheorem{nonumberingp}{Proposition}
\theoremstyle{definition}
\newtheorem{remark}[theorem]{Remark}
\newtheorem{definition}[theorem]{Definition}
\begin{document}
	
	\author[Robert Laterveer]
	{Robert Laterveer}

	\address{Institut de Recherche Math\'ematique Avanc\'ee,
		CNRS -- Universit\'e 
		de Strasbourg,\
		7 Rue Ren\'e Des\-car\-tes, 67084 Strasbourg CEDEX,\
		FRANCE}
	\email{robert.laterveer@math.unistra.fr}

	\author[Charles Vial]
	{Charles Vial}
	
	\address{Universit\"at Bielefeld, Fakult\"at f\"ur Mathematik,\
		Postfach 10031,\
		33501 Bielefeld,\
		GERMANY}
	\email{vial@math.uni-bielefeld.de}
	
%	\date{\today}

	\title[On the Chow ring of Cynk--Hulek varieties and Schreieder
	varieties]{On the Chow ring of Cynk--Hulek Calabi--Yau varieties and Schreieder
		varieties}
	
	\begin{abstract} This note is about certain locally complete families of
		Calabi--Yau
		varieties constructed by Cynk and Hulek, and certain varieties
		constructed by Schreieder. We prove that the cycle class
		map on the Chow ring of powers of these varieties admits a section, and that
		these
		varieties admit a multiplicative self-dual Chow--K\"unneth decomposition. As a
		consequence of both results, we prove that the subring of the Chow ring
		generated by divisors, Chern classes, and intersections of two cycles of
		positive codimension injects into cohomology, via the cycle class map.
		We also prove that the small diagonal of Schreieder surfaces admits a
		decomposition similar to that of K3 surfaces.
		As a by-product of our main result, we verify a conjecture of Voisin
		concerning zero-cycles on
		the
		self-product of Cynk--Hulek Calabi--Yau varieties, and in the odd-dimensional
		case we verify a
		conjecture of Voevodsky concerning smash-equivalence. Finally, in positive
		characteristic, we show that the supersingular Cynk--Hulek
		Calabi--Yau
		varieties provide examples of Calabi--Yau varieties with ``degenerate''
		motive.
	\end{abstract}

	\keywords{Algebraic cycles, Chow ring, motives, Bloch--Beilinson filtration,
		multiplicative Chow--K\"unneth decomposition, Calabi--Yau varieties, 
		supersingular varieties}
	\subjclass[2010]{14C15, 14C25, 14C30, 14J32, 14G17.}
	
	\maketitle

	\section*{Introduction}

	In the course of a quest for Calabi--Yau varieties that are modular, Cynk and
	Hulek \cite{CH} constructed certain Calabi--Yau varieties $X$ of arbitrary
	dimension $n$ over $\C$. Their construction starts from a product of $n$
	complex
	elliptic curves $E_1,\ldots, E_n$. The Calabi--Yau variety $X$ is obtained by
	considering
	\[\xymatrix{
		& E_1\times\cdots\times E_n \ar[d]^p \\
		X\ \ \ \ar[r]^{f\hspace{2cm} } &  \ \ \ (E_1\times\cdots\times E_n)/G :=
		\bar X   
	}\]
	where $G$ is a certain group of automorphisms (specifically $G\cong
	\ZZ_2^{n-1}$, or $G\cong \ZZ_3^{n-1}$ and $E_1=\cdots=E_n$ is an elliptic curve
	with an order-3 automorphism), and $f$ is a crepant resolution of
	singularities. We refer to Theorems \ref{ch} and \ref{ch3}
	below for explicit definitions, and to Propositions \ref{p:inductionZ2} and
	\ref{p:inductionZ3} together with the proof of Claim \ref{key} for an explicit
	construction.\medskip
	
	The difference between the two types of Cynk--Hulek varieties ($G=
	\ZZ_2^{n-1}$, resp. $G= \ZZ_3^{n-1}$) is illustrated by their Hodge diamond.
	In the first case (\emph{i.e.} $G=
	\ZZ_2^{n-1}$), the Hodge diamond looks like
	\[ \begin{array}[c]{ccccccc}
	&&&1&&&\\
	&&  &\ast&  &&\\
	&&&\vdots&&&\\
	1&\ast&\dots&\dots&\dots&\ast&1\\
	&&&\vdots&&&\\
	&&  &\ast&  &&\\
	&&&1&&&\\
	\end{array}\]	
	(where $\ast$ means some unspecified number, and all empty entries are $0$),
	whereas for the second case (\emph{i.e.} $G= \ZZ_3^{n-1}$), the Hodge diamond
	is
	\[ \begin{array}[c]{ccccccccc}
	&&&&1&&&&\\
	&&&  &\ast&  &&&\\
	&&&&\vdots&&&&\\
	1&0&\dots&0&\ast&0&\dots&0&1\\
	&&&&\vdots&&&&\\
	&&&  &\ast&  &&&\\
	&&&&1&&&&\\
	\end{array}\]

	Recently, Stefan Schreieder \cite{Schreieder} generalized the construction of
	Cynk--Hulek in order  to solve some construction problems for Hodge numbers.
	His
	construction starts with the hyperelliptic curve $C$ which is the smooth
	projectivization of the affine curve $\{y^2 = x^{3^c} - 1\}$ equipped with the
	action of a primitive $3^c$-th root of unity $\zeta$ acting as $(x,y) \mapsto
	(\zeta\cdot x,y)$. The variety $X$ is an explicit smooth projective birational
	model of $C^n/G$, where $G$ is a certain subgroup of $(\ZZ_{3^c})^n$
	isomorphic to  $(\ZZ_{3^c})^{n-1}$ (see Proposition \ref{prop:schreieder} and
	the proof of Claim \ref{key}). 
	The Hodge diamond of a Schreieder variety looks like
	\[ \begin{array}[c]{ccccccccccccccc}
	&&&&&&&1&&&&&&&\\
	&&&&&&  &\ast&  &&&&&&\\
	&&&&&&&\vdots&&&&&&&\\
	&&&&&&&\vdots&&&&&&&\\
	0&\dots&	0&g&0&\dots&0&\ast&0&\dots&0&g&0&\dots&0\\
	&&&&&&&\vdots&&&&&&&\\
	&&&&&&&\vdots&&&&&&&\\
	&&&&&&  &\ast&  &&&&&&\\
	&&&&&&&1&&&&&&&\\
	\end{array}\]	
	where $g=(3^c-1)/2$ can occur at any desired place $h^{a,b}$ with $a+b=n$.	
	
	From an arithmetic perspective, the
	construction of Schreieder has been used by Flapan and Lang \cite{FL} to 
	construct motives associated to certain algebraic Hecke characters, thereby
	generalizing the modularity result of Cynk and Hulek \cite{CH}. \medskip
	
	The varieties of Cynk--Hulek and of Schreieder are thus both very special from
	a Hodge-theoretic point of view and from an arithmetic point of view.
	The aim of this note is to confirm that these varieties are also very
	special from a cycle-theoretic point of view.\medskip
	
	Let $\CH^i(X)$ denote the Chow groups with
	rational coefficients, let $\CH^i_{num}(X)$ denote the subgroup of
	numerically
	trivial cycles, and let $\overline{\CH}^i(X)$ denote the quotient. 
	Our main result concerns the multiplicative structure of the Chow ring
	of~$X$\,:

	\begin{nonumbering}[Theorem \ref{t:bv}] Let $X$ be either a Cynk--Hulek
		Calabi--Yau variety as in Theorem~\ref{ch} or \ref{ch3}, or a Schreieder
		variety as in Theorem \ref{schreieder}.  Then the
		$\QQ$-algebra epimorphism $\CH^*(X^m) \to \overline{\CH}^*(X^m)$ admits a
		section whose image contains the Chern classes of $X^m$, for all positive
		integers $m$.
		
		Moreover, assuming $n:=\dim X \geq 2$,  the graded subalgebra $R^*(X)
		\subseteq
		\CH^*(X)$ generated by
		divisors, Chern classes and by cycles that are the intersection of two cycles
		in
		$X$ of positive codimension injects into $\overline{\CH}^*(X)$. In particular,
		for any $k$
		the image of the  intersection map 
		$$\CH^i(X) \otimes \CH^{k-i}(X) \to
		\CH^{k}(X)\ \ \ \ (0<i<k)\ $$ 
		injects into cohomology.
	\end{nonumbering}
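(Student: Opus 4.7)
The plan is to exploit the explicit inductive construction of Cynk--Hulek and Schreieder varieties recalled in Propositions \ref{p:inductionZ2}, \ref{p:inductionZ3} and \ref{prop:schreieder}, whereby an $n$-dimensional variety $X=X_n$ of either type arises as a crepant resolution of the quotient of $X_{n-1}\times C$ by a cyclic group action, with $C$ an elliptic curve (Cynk--Hulek) or a hyperelliptic curve (Schreieder). I would prove the first assertion by induction on $n$. The base case $X_1=C$ is handled as follows: an elliptic curve carries a multiplicative self-dual Chow--K\"unneth (MCK) decomposition by the Beauville--Deninger--Murre theory of abelian varieties, and the hyperelliptic curve underlying a Schreieder variety inherits one from its Jacobian. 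In either case the induced weight-zero piece is a ring section of $\CH^*\to\overline{\CH}^*$ containing the Chern classes (trivially, in dimension one).

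For the inductive step, I need to show that the existence of an MCK together with a Chern-class-containing section is preserved under three operations: (a) products with a curve of the same type, which is automatic since MCK decompositions and their sections behave well under tensor products; (b) passage to invariants under a finite abelian group acting equivariantly with respect to the MCK, for which one uses $\CH^*(Y/G)=\CH^*(Y)^G$; and (c) crepant resolution of $(X_{n-1}\times C)/G$, realized as an iterated blow-up along smooth centers. The centers are themselves products of lower-dimensional Cynk--Hulek or Schreieder varieties with components of the $G$-fixed locus, hence carry compatible MCK structures by the induction hypothesis, and the blow-up formula for Chow motives lifts the section across each blow-up. Running the same argument through $m$ copies of the construction in parallel yields the statement for $X^m$.

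For the second part, the MCK decomposition endows $\CH^*(X)$ with a bigrading $\CH^i_{(s)}(X)$ satisfying $\CH^i_{(s)}\cdot\CH^j_{(t)}\subseteq\CH^{i+j}_{(s+t)}$, and the image of the section is identified with $\bigoplus_i\CH^i_{(0)}(X)$. It therefore suffices to verify $R^*(X)\subseteq\bigoplus_i\CH^i_{(0)}(X)$. Divisors lie there because the Hodge diamond forces $h^{1,0}(X)=0$, hence $\Pic^0(X)=0$ and $\CH^1(X)\hookrightarrow H^2(X,\QQ)$. Chern classes lie there by the first part. The remaining point --- that a product $a\cdot b$ with $a\in\CH^i(X)$, $b\in\CH^j(X)$, $i,j>0$, lies in $\bigoplus_k\CH^k_{(0)}(X)$ --- reduces to the vanishing $\CH^i_{(s)}(X)=0$ for $0<i<n$ and $s>0$, which is forced by the extreme concentration of the Hodge diamond (no intermediate piece off codimension $n$ on which a weight-positive Chow class could live). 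The conclusion then follows from multiplicativity of the section: the constraints $i,j>0$ and $i+j\le n$ together force $i,j<n$, so both factors are already in weight zero.

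The main obstacle is step (c) of the inductive argument: verifying that the MCK decomposition and its section lift across each crepant blow-up in a \emph{multiplicative} way. Concretely, one must check that the embedding of each smooth blow-up center is a morphism of Chow motives compatible with the MCK on both center and ambient variety, and that the projectors produced by the blow-up formula remain multiplicative --- a property that is nontrivial even when center and ambient variety individually carry MCK decompositions. The explicit combinatorial nature of the Cynk--Hulek and Schreieder construction, which presents the centers as products of lower-dimensional varieties of the same type with components of the $G$-fixed locus, is what makes this multiplicativity check tractable, reducing it ultimately to K\"unneth-type computations anchored in the base-case MCK on the original elliptic or hyperelliptic curve.
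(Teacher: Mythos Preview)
Your overall inductive strategy mirrors the paper's, but there is a genuine gap in the first part that undermines the whole argument. You propose to take the weight-zero piece $\bigoplus_i \CH^i_{(0)}(X)$ of the MCK bigrading as the desired section of $\CH^*(X)\twoheadrightarrow\overline{\CH}^*(X)$. This works for a curve, but already for a product of two elliptic curves it is \emph{not known} that $\CH^*_{(0)}$ injects into $\overline{\CH}^*$: that injectivity is precisely Murre's conjecture~D (Remark~\ref{R:Murre}), equivalently Beauville's conjecture on the weight-zero part of the Beauville decomposition, and it is open. So your inductive step~(a) already fails, and nothing downstream recovers it. The paper avoids this by working instead with the condition~$(\star)$ and O'Sullivan's symmetrically distinguished cycles: the subalgebra $\operatorname{DCH}^*_\phi(X)\subseteq\CH^*_{(0)}(X)$ is \emph{known} to map isomorphically to $\overline{\CH}^*(X)$ (Theorem~\ref{thm:SD}, Proposition~\ref{prop:Disting}), and it is $(\star)$, not merely the MCK, that is propagated through the inductive construction. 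The MCK is then a consequence of $(\star)$ via Proposition~\ref{prop:multmarking}, not an input.

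There is a second, smaller error in your argument for the ``moreover'' part. You claim $\CH^i_{(s)}(X)=0$ for $0<i<n$ and $s>0$, but this is false: the decomposition of the motive (Claim~\ref{key2}) only gives $\CH^i(X)=\CH^i_{(0)}(X)\oplus\CH^i_{(2i-n)}(X)$ (equation~\eqref{eq:multX}), and for $n/2<i<n$ the piece $\CH^i_{(2i-n)}$ sits in positive weight and need not vanish. The correct argument, as in Proposition~\ref{prop:crucial}, is that for $0<i,j<n$ the only available weights on each factor are $0$ and $2i-n$ (resp.\ $2j-n$), and multiplicativity forces the product into a weight which, by the same two-term decomposition in codimension $i+j$, can only be~$0$. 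One then still needs $\CH^{i+j}_{(0)}$ to inject into $\overline{\CH}^{i+j}$, which the paper gets for $i+j\neq n/2$ from the Lefschetz-motive description of $\pi_X^{2(i+j)}$, and in general by comparison with $\operatorname{DCH}^*_\phi$---again requiring the $(\star)$ machinery you have bypassed.
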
         
	
	Theorem \ref{t:bv} is similar to results in the Chow ring of K3 surfaces
	\cite{BV}, and is closely related to the conjectural ``splitting property'' of
	Beauville \cite{Beau3}. Presumably, the fact that $R^{n}(X) = \QQ
	c_{n}(X)$ is true for {\em any\/} Calabi--Yau variety\footnote{This expectation
		is perhaps overly optimistic. 
		Voisin \cite[p. 101]{Vo} writes more prudently\,: ``It would be very
		interesting to understand the class of Calabi--Yau varieties satisfying
		conclusions analogous to'' Calabi--Yau complete intersections. Bazhov
		\cite{Baz} states (and
		proves in certain cases) a weaker version of this expectation, only
		considering $0$-cycles that are intersections of divisors.}\,; for instance,
	this
	was
	established for Calabi--Yau complete intersections \cite{V13}, \cite{LFu}. On
	the other hand, the full statement of Theorem \ref{t:bv} is certainly {\em
		not\/} true for all Calabi--Yau varieties \cite[Example 2.1.5]{Beau3}\,; this
	behavior is peculiar to the Cynk--Hulek Calabi--Yau varieties.
	
	Somewhat surprisingly, the Schreieder varieties give examples in any dimension,
	and with arbitrarily large geometric genus, for which the
	intersection product in the Chow ring is ``as degenerate as possible''. (This
	should be contrasted with the behavior of the surfaces $S\subset\PP^3$
	exhibited in \cite{OG}, for which the rank of $\ima\bigl( \CH^1(S)\otimes
	\CH^1(S)\to \CH^2(S)\bigr)$ gets arbitrarily large when the degree of $S$
	grows.)
	Schreieder surfaces of genus $1$ are K3 surfaces while Schreieder surfaces of
	higher genus are modular elliptic of Kodaira dimension 1 (see \cite{Fla} and
	Remark~\ref{rmk:schsur}). For those, we obtain as a corollary the existence of a
	decomposition of the small diagonal similar to that of K3 surfaces proved by
	Beauville and Voisin \cite{BV}\,:
	
	\begin{nonumberingc}\label{cor:moddiag}
		Let $S$ be a Schreieder surface. Then there exists a point $p\in S$ such that
		$$(x,x,x) - (x,x,p) - (x,p,x) - (p,x,x) +(p,p,x) + (p,x,p) + (x,x,p) = 0 \quad
		\mbox{in}\ \CH^4(S\times S \times S).$$
		Here $(x,x,x), (x,x,p), (x,p,p)$  are the classes of the images of $S$ into
		$S\times S \times S$ by the maps $x \mapsto (x,x,x)$, $x \mapsto (x,x,p)$, $x
		\mapsto (x,p,p)$.
	\end{nonumberingc}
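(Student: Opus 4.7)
The plan is to apply Theorem \ref{t:bv} to show that both sides of the desired identity lie in a distinguished subring $D^*(S^3) \subseteq \CH^*(S^3)$ mapping isomorphically onto $\overline{\CH}^*(S^3)$, conclude from their cohomological agreement, and then identify the distinguished zero-cycle with the class of a geometric point.

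By Theorem \ref{t:bv}, the cycle class map $\CH^*(S^m) \twoheadrightarrow \overline{\CH}^*(S^m)$ admits, for each $m$, a $\QQ$-algebra section whose image—denoted $D^*(S^m)$—contains the Chern classes of $S^m$; combined with the self-dual multiplicative Chow--K\"unneth decomposition furnished by the theorem, the section is compatible with pullbacks across the family $\{S^m\}_m$, so that the Chow--K\"unneth projectors of $S$ lie in $D^2(S^2)$ and their pullbacks lie in $D^*(S^m)$. Let $o \in D^2(S)$ be the distinguished degree-one zero-cycle. Then $o_i := p_i^*(o)$, $\Delta_{ij} := p_{ij}^*(\Delta_S)$, and the products $o_i o_j$ and $\Delta_{ij} \cdot o_k$ all lie in $D^*(S^3)$; here $\Delta_S = \pi_0 + \pi_2 + \pi_4 \in D^2(S^2)$ because $h^{1,0}(S) = 0$ (from the Hodge diamond with $n=2$) forces the odd Chow--K\"unneth projectors to vanish. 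Moreover $\delta_*[S] = \Delta_{12} \cdot \Delta_{13}$ is a proper intersection of two partial diagonals, hence also lies in $D^*(S^3)$. A direct Künneth computation then shows
\[
[\delta_*[S]] = [\Delta_{12} \cdot o_3 + \Delta_{13} \cdot o_2 + \Delta_{23} \cdot o_1 - o_1 o_2 - o_1 o_3 - o_2 o_3] \quad \text{in } H^8(S^3),
\]
and since $D^*(S^3)$ injects into $\overline{\CH}^*(S^3)$ via the cycle class map, the identity lifts to $\CH^4(S^3)$.

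To finish, $o$ must be realized as the class of an actual point $p \in S$. Using the construction of $S$ as a (crepant) desingularization $f : S \to (C \times C)/G$, the exceptional locus contains configurations of rational curves arising from the resolved quotient singularities. Pick such a rational curve $E \subseteq S$ and any $p \in E$: since $E \cong \PP^1$, the class $[p] \in \CH^2(S)$ depends only on $E$, and $E \cdot E$ is a nonzero rational multiple of $[p]$; as $E$ is a divisor, $E \cdot E \in R^2(S)$, hence $[p] \in R^2(S)$. Likewise $o \in R^2(S)$: since $c_2(S), o \in D^2(S) \cong \QQ$ are proportional and a degree comparison yields $c_2(S) = \chi(S) \cdot o$, we find $o = c_2(S)/\chi(S) \in R^2(S)$. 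The injection $R^2(S) \hookrightarrow \overline{\CH}^2(S) = \QQ$ from Theorem \ref{t:bv} then forces the degree-one classes $o$ and $[p]$ to coincide in $\CH^2(S)$, completing the proof.

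The main technical obstacle is this final identification of $o$ with the class of a single point: the preceding steps follow formally from Theorem \ref{t:bv} plus a Künneth computation, but pinning $o$ down to a geometric point requires exploiting the specific rational-curve geometry of Schreieder's desingularization—delicate for the higher-genus modular elliptic Schreieder surfaces, and reducing to the classical Beauville--Voisin construction in the K3 case.
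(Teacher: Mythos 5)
Your proposal is correct in substance, and its first half coincides with the paper's argument: the paper also observes that the seven-term cycle is numerically trivial for any regular surface and that it therefore suffices to know that each summand is distinguished (the small diagonal by $(\star_{\mathrm{Mult}})$, the diagonal and fundamental class by general facts, the point by construction), so that injectivity of the distinguished subring into $\overline{\CH}^*$ kills the difference. One caveat there: the bare statement of Theorem \ref{t:bv} does not by itself give you that $\Delta_S$, $\delta_S$ and the pullbacks of $o$ lie in a \emph{common} multiplicative section for all powers $S^m$; what you are really using is the marking satisfying $(\star)$ from Claim \ref{key} (together with stability of $(\star)$ under products and Proposition \ref{prop:Disting}), or equivalently Theorem \ref{t:mck} combined with \cite[Proposition~8.14]{SVfourier} --- exactly the ingredients the paper invokes --- so you should cite those rather than Theorem \ref{t:bv} alone. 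Likewise your vanishing of the odd Chow--K\"unneth projectors uses finite-dimensionality (motive of abelian type); alternatively the diagonal is distinguished for any marking by \cite[Lemma~3.8]{FV}, as the paper notes.

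Where you genuinely diverge is the final identification of the distinguished degree-one class with an actual point. The paper produces the point by choosing a $G$-fixed point $p_0$ on $Y_0=C\times C$ (distinguished by Proposition \ref{prop:markinghyperelliptic}) and tracing it through the inductive construction of Proposition \ref{prop:schreieder}, keeping distinguishedness at each step. You instead identify $o=c_2(S)/\chi_{\mathrm{top}}(S)$ (valid, since $\operatorname{DCH}^2(S)\cong\QQ$ contains $c_2(S)$ and $\chi_{\mathrm{top}}(S)=2+b_2(S)>0$) and compare it, inside $R^2(S)$ and via the ``moreover'' part of Theorem \ref{t:bv}, with the class of a point on a rational curve of nonzero self-intersection. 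This is a Beauville--Voisin-style argument and buys a more explicit description of $p$ (any point on such a curve), at the price of the extra geometric input whose justification you leave vague and flag as ``delicate''. It is not delicate, but it does need a sentence: $S=Y_c$ admits a projective birational morphism onto the singular surface $(C\times C)/G$ (the zig-zag construction descends), the quotient has genuine cyclic quotient singularities, these are rational singularities so every exceptional curve of the resolution is rational, and the negativity lemma gives $E^2<0\neq 0$; this works for all $c$ and all genera, not just the K3 case. With those two points made precise, your proof is complete and is a legitimate alternative to the paper's tracing argument.
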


	In order to show that the $\QQ$-algebra epimorphism $\CH^*(X^m) \to
	\overline{\CH}^*(X^m)$ of Theorem \ref{t:bv} admits a section, we prove that
	$X$ satisfies a certain
	condition $(\star)$ (\emph{cf.} Definition \ref{def:Star}) which was introduced
	in \cite{FV}.
	The ``moreover'' part of Theorem \ref{t:bv} is not a formal consequence of
	the existence of a section, and is obtained, via Proposition
	\ref{prop:crucial}, by computing the motive of $X$
	and by establishing yet another result related to the splitting property (see
	\S
	\ref{sec:mck} for the notion of \emph{multiplicative Chow--K\"unneth
		decomposition})\,:
	
	\begin{nonumbering}[Theorem \ref{t:mck}]Let $X$ be either a Cynk--Hulek
		Calabi--Yau variety as in Theorem~\ref{ch} or \ref{ch3}, or a
		Schreieder variety as in Theorem \ref{schreieder}. 
		Then $X$ admits a multiplicative self-dual Chow--K\"unneth decomposition, in
		the sense of \cite{SV}.
	\end{nonumbering}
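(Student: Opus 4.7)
The plan is to argue by induction on $n = \dim X$, leveraging the inductive geometric constructions of Propositions \ref{p:inductionZ2}, \ref{p:inductionZ3}, and \ref{prop:schreieder}. In each of the three cases, $X$ is realized as a crepant resolution $f : X \to (X' \times C)/H$, where $X'$ is a lower-dimensional variety of the same type, $C$ is an elliptic or hyperelliptic curve, and $H$ is a cyclic group acting diagonally. Smooth projective curves admit a canonical self-dual MCK decomposition associated to any chosen point, and the tensor product of two self-dual MCK decompositions is again a self-dual MCK decomposition, so the inductive step reduces to transferring an MCK across a quotient by a finite group followed by a crepant resolution.

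The base case $n=1$ is immediate: the variety $X=C$ is a smooth projective curve carrying its canonical self-dual MCK. For the inductive step, assuming $X'$ admits a self-dual MCK, I would first equip $X' \times C$ with the product MCK, whose projectors are $H$-invariant provided the reference point on $C$ used to define its MCK is chosen to be a fixed point of the $H$-action. Averaging over $H$ then produces a self-dual MCK on the $H$-invariant submotive of $X' \times C$, which represents the Chow motive of $\bar X := (X'\times C)/H$ (with rational coefficients).

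The main obstacle is lifting this decomposition across the crepant resolution $f : X \to \bar X$. Using the blow-up formula for Chow motives one can write
\[
h(X) \cong h(\bar X) \oplus \bigoplus_i h(F_i)(-r_i),
\]
where the $F_i$ are the fixed loci of $H$ (or the closed subloci through which the resolution factors), with appropriate Tate twists. Each $F_i$ is itself a disjoint union of products of lower-dimensional Cynk--Hulek or Schreieder varieties with curve factors, hence carries a self-dual MCK by induction. Assembling these pieces yields a self-dual CK decomposition on $X$; the delicate point is to verify \emph{multiplicativity}, i.e.\ that $\pi_X^k \circ \delta_X \circ (\pi_X^i \otimes \pi_X^j) = 0$ whenever $i+j \neq k$, where $\delta_X$ denotes the small diagonal. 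This requires controlling how the classes of the exceptional divisors multiply with each other and with pull-backs from $\bar X$; the crepant condition is crucial here, as is the explicit combinatorics of the fixed loci in the inductive construction. As an alternative route, one may try to deduce multiplicativity from the condition $(\star)$ of \cite{FV} (Definition~\ref{def:Star}) established for these $X$ in the proof of Theorem~\ref{t:bv}, via the general machinery of \cite{FV} which produces an MCK once $(\star)$ holds in a sufficiently equivariant form. Self-duality is preserved throughout, since products, finite-group quotients, and blow-ups along smooth centers all respect duality when the input projectors are self-dual.
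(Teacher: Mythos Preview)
Your ``alternative route'' is in fact the paper's actual proof: the authors establish Claim~\ref{key} (that $X$ admits a marking satisfying $(\star)$, hence in particular $(\star_{\mathrm{Mult}})$) via the inductive constructions of Propositions~\ref{p:inductionZ2}, \ref{p:inductionZ3}, \ref{prop:schreieder}, and then simply invoke Proposition~\ref{prop:multmarking} (\cite[Prop.~6.1]{FV}) to conclude. No further work on multiplicativity of the Chow--K\"unneth projectors is needed; it comes for free from the general machinery of \cite{FV} once $(\star_{\mathrm{Mult}})$ is known. The paper does remark that a direct proof using the results of \cite{SV} in place of \cite{FV} is possible, but does not carry it out.

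Your primary, direct approach has a couple of concrete inaccuracies that would need repair. First, the fixed loci $F_i$ are \emph{not} products of lower-dimensional Cynk--Hulek or Schreieder varieties: in all three inductive propositions one of the standing hypotheses (property (iv)) is that the relevant fixed loci have \emph{trivial Chow groups}, i.e.\ their motives are sums of Lefschetz motives. This is in fact helpful rather than harmful (such varieties trivially carry an MCK), but it means your induction hypothesis is not what is being used. Second, the resolution $X \to \bar X$ is not a single blow-up to which one can apply the blow-up formula for Chow motives as you wrote it, particularly since $\bar X$ is singular; in the $\ZZ_2$ case $X$ is obtained by first blowing up $X_1\times X_2$ and then taking a quotient, while in the $\ZZ_3$ and Schreieder cases the construction involves iterated blow-ups, quotients, and even a blow-down. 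Tracking multiplicativity through these steps by hand is exactly the content of \cite{SV}, and the paper sidesteps this entirely by working with distinguished cycles instead.
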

	
	Other varieties admitting a multiplicative Chow--K\"unneth decomposition
	include abelian varieties, hyperelliptic curves, Hilbert schemes of points of
	K3 surfaces and of abelian surfaces \cite{V6}, and generalized Kummer
	varieties \cite{FTV}.
	Theorem \ref{t:mck} provides the first examples of Calabi--Yau varieties of
	dimension $>2$ with a multiplicative Chow--K\"unneth decomposition, while
	Theorem \ref{t:bv} provides the first examples of Calabi--Yau varieties of
	dimension $>2$  for which the subalgebra of the Chow ring generated by divisors
	and the Chern classes
	injects into cohomology via the cycle class map.
	\medskip

	Along the way,
	we compute (Corollary \ref{C:CH}) the Chow motive of
	certain finite quotients of products of (hyper)elliptic curves (including the
	quotients considered in Theorems \ref{ch}, \ref{ch3} and \ref{schreieder}), but
	also compute (Claim~\ref{key2}) the Chow motive of the Cynk--Hulek Calabi--Yau
	varieties and of the Schreieder varieties.
	In Section~\ref{sec:applications}, we offer three applications. 
	
	First, we use Corollary~\ref{C:CH} to establish\,:

	\begin{nonumbering}[Theorem \ref{conjV}] Let $X$ be a Cynk--Hulek Calabi--Yau
		variety of dimension $n$ as in Theorem \ref{ch} or \ref{ch3}. Then any
		$a,a^\prime\in \CH^n_{num}(X)$ satisfy
		\[ a\times a^\prime = (-1)^n \, a^\prime\times a\ \ \ \hbox{in}\
		\CH^{2n}(X\times X)\ .\]
	\end{nonumbering}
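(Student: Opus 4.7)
The plan is to reduce the statement to the analogous one for the abelian variety $A := E_1\times\cdots\times E_n$ via the motivic description from Corollary~\ref{C:CH}, and then to exploit the K\"unneth factorization of $\h(A)$ into elliptic factors. More precisely, Corollary~\ref{C:CH} expresses $\h(X)$, up to Tate-twist summands $\mathds{1}(-k)$ coming from the crepant resolution $f\colon X\to A/G$, as the invariant submotive $\h(A)^G\subseteq \h(A)$. Since each such Tate summand contributes to $\CH^n(X)$ only classes whose numerical class is nonzero, these summands contribute nothing to $\CH^n_{num}(X)$, and there is an induced isomorphism
\[
\CH^n_{num}(X)\;\xrightarrow{\sim}\;\CH^n_{num}(A)^G.
\]
Because this isomorphism is implemented by a $G$-equivariant correspondence built uniformly from $A$, the induced map on self-products intertwines the swap of $X\times X$ with the swap of $A\times A$, so the identity on $X$ will follow from its counterpart on $A$.

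Second, I would reduce the statement on $A$ to a computation on a single K\"unneth summand. From
\[
\h(A) \,=\, \bigotimes_{i=1}^n \h(E_i) \,=\, \bigotimes_{i=1}^n\bigl(\mathds{1}\oplus \h^1(E_i)\oplus \mathds{1}(-1)\bigr),
\]
every K\"unneth summand other than the fully transcendental part $T:=\bigotimes_i \h^1(E_i)$ contains a tensor factor $\mathds{1}$ or $\mathds{1}(-1)$, and hence contributes to $\CH^n(A)$ only algebraic (not numerically trivial) classes. Therefore $\CH^n_{num}(A)\subseteq \CH^n(T)$, and it suffices to verify that the swap on $A\times A$ acts as $(-1)^n$ on the submotive $T\otimes T\subset \h(A\times A)$.

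Third, the sign computation is straightforward: for each $i$, the Chow--K\"unneth projector $\pi^1_{E_i}$ is transpose-invariant, so the swap preserves $\h^1(E_i)\otimes \h^1(E_i)$, and its action there is $-1$ by the Koszul sign rule (equivalently, by the skew-symmetry of the cup product on $H^1(E_i)$). Taking the tensor product over $i=1,\ldots,n$ yields a total sign of $(-1)^{n^2}=(-1)^n$ on $T\otimes T$, which is the identity sought.

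The principal difficulty lies in the first step: making precise that the motivic identification is genuinely compatible with the two self-swaps. This should be formal from the $G$-equivariance of the correspondences appearing in Corollary~\ref{C:CH}, but it needs to be spelled out. A perhaps more transparent alternative is to lift $a,a'\in \CH^n_{num}(X)$ to $G$-invariant cycles $\tilde a,\tilde a'\in \CH^n_{num}(A)^G$ via an explicit $G$-equivariant correspondence, verify $\tilde a\times \tilde a' = (-1)^n\,\tilde a'\times \tilde a$ directly on $A\times A$ using the computation above, and then push the identity down to $X\times X$.
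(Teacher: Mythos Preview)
Your overall strategy matches the paper's: reduce to the abelian variety $A=E_1\times\cdots\times E_n$ and argue there. The paper does the first reduction slightly more economically, using only birational invariance of $\CH_0$ to pass from $X$ to $\bar X=A/G$ (rather than the full motivic decomposition, which incidentally is Claim~\ref{key2}, not Corollary~\ref{C:CH}), and then Corollary~\ref{C:CH} to land in $p_*\CH^n(A)_{(n)}$. Your steps~1 and~2 are fine in spirit and lead to the same place.

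The genuine gap is in your step~3. The assertion that the geometric swap acts as $-1$ on $\h^1(E_i)\otimes\h^1(E_i)$ is \emph{false} as a statement about Chow motives: on cohomology this summand carries $H^1(E_i)\otimes H^1(E_i)$, a $4$-dimensional space on which the transposition has a $3$-dimensional $(+1)$-eigenspace (symmetric part) and a $1$-dimensional $(-1)$-eigenspace (alternating part). The Koszul sign rule concerns the \emph{modified} commutativity constraint used to make the category of motives Tannakian; it does not describe the action of the honest geometric swap correspondence, which is what governs the identity $a\times a'=(-1)^n a'\times a$. Likewise, skew-symmetry of the cup product $H^1\otimes H^1\to H^2$ says nothing about the swap on $H^1\otimes H^1$ itself. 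So you cannot conclude that the swap equals $(-1)^n\cdot\mathrm{id}$ on the motive $T\otimes T$, and hence your tensor-product argument collapses.

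What you actually need is that the swap acts as $(-1)^n$ on the specific Chow group $\CH^{2n}(T\otimes T)=\CH^{2n}(A\times A)_{(2n)}$, equivalently that $a\times a'=(-1)^n\,a'\times a$ for $a,a'\in\CH^n(A)_{(n)}$. This is precisely Voisin's result for abelian varieties (Proposition~\ref{abvar}, \cite[Example~4.40]{Vo}), which the paper invokes as a black box; its proof uses the Pontryagin product / Beauville decomposition in an essential way and is not a formal sign computation. Even the $n=1$ case --- that $a\times a'+a'\times a=0$ in $\CH^2(E\times E)$ for $a,a'\in\CH^1_{\hom}(E)$ --- is already nontrivial, and granting it would still not suffice, since $\CH^{2n}\bigl(\bigotimes_i \h^1(E_i)^{\otimes 2}\bigr)$ is not the tensor product of the $\CH^2(\h^1(E_i)^{\otimes 2})$. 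You should replace your third step by a citation of Proposition~\ref{abvar}.
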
 
	According to an old conjecture of Voisin (\cite{V9}, \emph{cf.} also Section
	\ref{svois} below), the statement of  Theorem~\ref{conjV} should hold for {\em
		any\/} Calabi--Yau variety. As far as we are aware, Theorem~\ref{conjV} gives
	the first examples of Calabi--Yau varieties of arbitrary dimension verifying
	Voisin's conjecture.\medskip
	
	Second, a consequence of Claim~\ref{key2} concerns Voevodsky's conjecture on
	smash-equivalence\,;
	we refer to~\cite{Voe} and Section \ref{svoe} below for the definition and
	background of smash-equivalence.
	
	\begin{nonumberingp}[Proposition \ref{conjVoe}] Let $X$ be either a Cynk--Hulek
		Calabi--Yau variety as in Theorem \ref{ch} or \ref{ch3}, or a Schreieder
		variety as in Theorem \ref{schreieder}. Assume that $X$ is odd-dimensional.
		Then smash-equivalence and
		numerical equivalence coincide
		for all $\CH^i(X)$.
	\end{nonumberingp}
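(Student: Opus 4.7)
The plan is to deduce this from Claim~\ref{key2}, which identifies the Chow motive of $X$ with a direct summand of a finite direct sum of Tate-twisted motives of products of (hyper)elliptic curves. The key inputs are Kimura finite-dimensionality in the sense of Kimura--O'Sullivan and the fact that, for products of smooth projective curves, numerical equivalence coincides with homological equivalence.

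First, I would observe that any smooth projective curve $C$ has a Kimura finite-dimensional Chow motive $h(C)=\mathds{1}\oplus h^1(C)\oplus \mathds{1}(-1)$, with $h^1(C)$ oddly finite-dimensional (satisfying $\operatorname{Sym}^{2g+1} h^1(C) = 0$ for $g$ the genus of $C$). Since the class of Kimura finite-dimensional motives is stable under tensor products and direct summands, Claim~\ref{key2} implies that $h(X)$ is Kimura finite-dimensional. Next, numerical and homological equivalence agree on smooth projective curves for weight reasons, and the K\"unneth formula propagates this equality to arbitrary products of curves; the property descends to any direct summand, so Grothendieck's standard conjecture~$D$ holds for $X$ in every codimension. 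Finally, I would invoke the nilpotence theorem of Kimura (in the form applied to morphisms from the unit motive): for a Kimura finite-dimensional motive $M$ on which numerical and homological equivalence coincide, any morphism $\alpha\colon \mathds{1}\to M$ that is numerically trivial is $\otimes$-nilpotent. Applied with $M = h(X)(i)$ for each codimension $i$, this produces the desired smash-nilpotence of every numerically trivial cycle in $\CH^i(X)$.

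The odd-dimensionality assumption enters through the specific shape of the motive decomposition of $h(X)$ provided by Claim~\ref{key2}: in odd dimension the transcendental summands of $h(X)$ are direct summands of tensor products involving an odd number of the oddly finite-dimensional factors $h^1(C_i)$, which is precisely the configuration to which the $\operatorname{Sym}^n$-vanishing argument of Kimura applies cleanly and uniformly in every codimension. I anticipate the main technical obstacle to lie not in the nilpotence step -- which is formal once Kimura finite-dimensionality and $D$ are in place -- but in the bookkeeping required to transport smash-nilpotence through the direct-summand embedding of $h(X)$ and the accompanying Tate twists supplied by Claim~\ref{key2}. Once the relevant projectors are written down explicitly, this becomes routine.
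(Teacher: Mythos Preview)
Your second paragraph contains a genuine gap. The ``nilpotence theorem of Kimura'' you invoke --- that for a Kimura finite-dimensional motive $M$ satisfying conjecture~$D$, every numerically trivial morphism $\mathds{1}\to M$ is $\otimes$-nilpotent --- is not a known result. Kimura's nilpotence theorem concerns \emph{endomorphisms}: a numerically trivial endomorphism of a finite-dimensional motive is nilpotent for \emph{composition}, which is a very different statement from smash-nilpotence of cycles. Were your stated version true, Voevodsky's conjecture would follow immediately for every abelian variety (these are Kimura finite-dimensional and satisfy~$D$ by Lieberman), yet the conjecture remains open already for abelian fourfolds.

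The odd-dimensionality hypothesis is not a bookkeeping convenience but the heart of the argument, and your final paragraph almost says so without committing to it. The paper proceeds as follows. By Claim~\ref{key2} one has $\h(X)=T\oplus\bigoplus\mathds{1}(\ast)$ with $H^j(T)=0$ for $j\neq n$, so that $\CH^i_{num}(X)=\CH^i_{num}(T)$. By Kimura finite-dimensionality, $T$ is a direct summand of $(C_1\times\cdots\times C_n,\pi^n,0)$; the latter is a direct sum of tensor products $\h^{i_1}(C_1)\otimes\cdots\otimes\h^{i_n}(C_n)$ with $i_1+\cdots+i_n=n$ odd, and each such summand contains an odd number of the oddly finite-dimensional factors $\h^1(C_j)$, hence is itself \emph{oddly} finite-dimensional. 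Kimura \cite[Proposition~6.1]{Kim} then gives that \emph{every} class in $\CH^i(T)$ --- not merely the numerically trivial ones --- is smash-nilpotent. Thus the crucial input is odd finite-dimensionality of the transcendental summand $T$, not finite-dimensionality plus~$D$ for $\h(X)$; you should restructure the proof around this point.
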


	Finally, in a brief excursion to positive characteristic, we exhibit, as a
	consequence of Claim~\ref{key2}, examples of Calabi--Yau varieties in
	characteristic $\geq 5$ whose motive is ``degenerate''\,:
	
	\begin{nonumberingp}[Proposition \ref{ss}] Let $k$ be an algebraically closed
		field of characteristic
		$\ge 5$. Let $X$ be a Cynk--Hulek Calabi--Yau variety over $k$ as in Theorem
		\ref{ch} or \ref{ch3}, where the elliptic curves are assumed to be
		supersingular and $X$ is even-dimensional. Then the Chow motive of $X$ is
		isomorphic to a direct sum of Lefschetz motives. Consequently,  the cycle class
		map to $\ell$-adic cohomology induces
		isomorphisms
		\[  \CH^i(X)_{\QQ_\ell}\ \xrightarrow{\cong}\ H^{2i}(X, \QQ_\ell(i))\ \ \
		\forall i\ \]
		(where $\ell$ is a prime different from $\hbox{char}\, k$).
	\end{nonumberingp}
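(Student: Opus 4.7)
My plan is to leverage the motivic decomposition of $X$ established in Claim \ref{key2}, which expresses the Chow motive $h(X)$ as a direct sum of Tate-twisted summands of the motive of the product $A := E_1 \times \cdots \times E_n$ of the supersingular elliptic curves used in the construction, together with Lefschetz motives arising from the crepant resolution. Since $\mathrm{char}\,k \geq 5$ is coprime to $|G|\in \{2^{n-1}, 3^{n-1}\}$, the usual averaging projectors are well-defined and this decomposition is valid in $\QQ$-Chow motives.

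First, I observe that the even-dimensionality of $X$ forces its cohomology to lie in even degrees: in the Hodge diamonds shown in the introduction, all nonzero entries are either on the vertical axis (of type $(p,p)$) or on the middle row (type $(p,q)$ with $p+q=n$), and when $n$ is even both cases yield an even cohomological degree. Thus only the even K\"unneth summands of $h(A)$ contribute to $h(X)$. The core of the argument is then to show that, for the supersingular abelian variety $A$ over $\bar{\mathbb F}_p$, every even K\"unneth summand $h^{2i}(A)$ is a direct sum of Lefschetz motives.

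I would establish this in two steps. First, the model case $h^1(E_0)^{\otimes 2} \cong \mathds{1}(-1)^{\oplus 4}$ for a supersingular elliptic curve $E_0$: the abelian surface $E_0 \times E_0$ is supersingular with $b_2 = 6$, the Tate conjecture for abelian surfaces gives $\mathrm{NS}(E_0 \times E_0)_{\QQ_\ell} \cong H^2(E_0 \times E_0, \QQ_\ell(1))$, and $\mathrm{Pic}^0(E_0 \times E_0)(\bar{\mathbb F}_p)$ is torsion (every abelian variety over $\bar{\mathbb F}_p$ has torsion group of rational points), so $\CH^1(E_0 \times E_0)_\QQ = \mathrm{NS}(E_0 \times E_0)_\QQ$; combining these yields $h^2(E_0 \times E_0) \cong \mathds{1}(-1)^{\oplus 6}$ as a Chow motive, and K\"unneth extracts $h^1(E_0)^{\otimes 2} \cong \mathds{1}(-1)^{\oplus 4}$. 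Second, since $A$ is isogenous to some power $E_0^m$, I transfer this to $A$ and expand $h(E_0)^{\otimes m} = (\mathds{1} \oplus h^1(E_0) \oplus \mathds{1}(-1))^{\otimes m}$ via K\"unneth: only even tensor powers of $h^1(E_0)$ appear in even cohomological degree, so every $h^{2i}(A)$ is a sum of Lefschetz motives. This is essentially Fakhruddin's theorem for supersingular abelian varieties.

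Since taking $G$-invariants of a sum of Lefschetz motives preserves the property, and the Lefschetz contributions from the crepant resolution cause no issue, $h(X)$ is a direct sum of Lefschetz motives. The statement on $\ell$-adic cycle class maps is then immediate, as each summand $\mathds{1}(-i)$ has realization $\QQ_\ell(-i)$ in cohomological degree $2i$, with cycle class map tautologically an isomorphism. The main technical obstacle is verifying the model case $h^2(E_0 \times E_0) \cong \mathds{1}(-1)^{\oplus 6}$ as a \emph{Chow} motive (not just modulo numerical equivalence), which rests on the vanishing of the $\mathrm{Pic}^0$ contribution with $\QQ$-coefficients, a phenomenon specific to the base $\bar{\mathbb F}_p$.
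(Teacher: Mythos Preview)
Your strategy coincides with the paper's: invoke Claim \ref{key2} to reduce to the summand $T$ inside $\h(E_1\times\cdots\times E_n)$, pass to a single supersingular $E$ by isogeny, establish the key identity $\h^1(E)\otimes\h^1(E)\cong\mathds{1}(-1)^{\oplus 4}$, and then expand $\h^n(E^n)$ via K\"unneth, using that $n$ is even so that only even tensor powers of $\h^1(E)$ occur. The paper's derivation of the key identity is more direct: since $\mathrm{End}(\h^1(E))=\mathrm{End}(E)\otimes\QQ$ is $4$-dimensional for supersingular $E$, duality gives $\dim\mathrm{Hom}(\mathds{1}(-1),\h^1(E)^{\otimes 2})=4$, and one concludes by Kimura finite-dimensionality. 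This argument is insensitive to the base field.

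Your route to the same identity has two small defects worth correcting. First, the $\mathrm{Pic}^0$ step is a red herring: the contribution of $\mathrm{Pic}^0$ to $\CH^1$ lies in the $\pi^1$-part of the motive, not in $\h^2$, so its vanishing plays no role in identifying $\h^2(E\times E)$ with $\mathds{1}(-1)^{\oplus 6}$. What you actually use is only that $\mathrm{NS}(E\times E)_\QQ$ has rank $6=b_2$ (equivalently, $\dim\mathrm{End}(E)_\QQ=4$), which already furnishes six maps $\mathds{1}(-1)\to\h^2$ and six maps back with invertible composition matrix; the remaining summand then vanishes by Kimura. Second, the detour through $\bar{\mathbb F}_p$ is unnecessary for this reason, and if you do take it you should say explicitly that supersingular elliptic curves descend to $\bar{\mathbb F}_p$, so that the motivic identity base-changes to arbitrary algebraically closed $k$.
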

	
	\noindent \textbf{Acknowledgments.} Thanks to Julius Ross, for asking whether
	there exist Calabi--Yau varieties of arbitrarily high dimension for which
	Voisin's conjecture is known. His question sparked this project.

	\section{The varieties of Cynk--Hulek and Schreieder}
	
	We denote $\ZZ_n$ the cyclic group of order $n$. Given a smooth projective
	variety, we denote $H_{tr}(X)$ its transcendental cohomology\,; it is the
	orthogonal complement (with respect to the choice of a polarization) of the
	subspace spanned by algebraic classes.

	\subsection{The Cynk--Hulek construction}

	\begin{theorem}[Cynk--Hulek \cite{CH}]\label{ch} 
		Let $E_1,\ldots,E_n$ be
		elliptic curves. For any $n\in\NN$, let 
		\[ G = \{  (m_1,\ldots,m_n)\ \in \ZZ_2^n :  m_1+\cdots+m_n=0 \}\cong
		\ZZ_2^{n-1} \]
		act on $E_1\times\cdots\times E_n$, where the generator of $\ZZ_2$  acts on
		$E_i$ by the $[-1]$-involution. Then there exists a crepant resolution
		\[ f\colon\  X\ \to\ \bar{X}:=(E_1\times\cdots\times E_n)/G\ ,\]
		and so $X$ is a Calabi--Yau variety. Moreover, such Calabi--Yau varieties
		form a locally complete family.
	\end{theorem}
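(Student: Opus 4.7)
The proof falls naturally into three parts: a local analysis of the singularities of $\bar X$, the explicit construction of a crepant resolution by induction on $n$, and the verification of the Calabi--Yau and locally complete properties via Hodge theory.

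First I would analyze the singularities of $\bar X$. The non-free locus of the $G$-action decomposes as the union, over non-trivial $g = (m_1, \ldots, m_n) \in G$, of fixed loci, each of which is a product of $2$-torsion points in the factors $E_i$ with $m_i = 1$ crossed with the remaining $E_j$'s. The defining condition $\sum m_i \equiv 0 \pmod 2$ is exactly the condition that every $g$ has an \emph{even} number of $(-1)$-eigenvalues on every tangent space at a fixed point, so $G$ acts through $SL$. By a theorem of Watanabe (or a direct discrepancy computation), $\bar X$ is then Gorenstein with trivial dualizing sheaf, and any crepant resolution $X$ will automatically satisfy $K_X = 0$.

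Next I would construct the crepant resolution by induction on $n$, in the spirit of a Borcea--Voisin construction. For $n=1$, $X = E_1$. For the induction step, write
$$
\bar X \;\cong\; \bigl( \bar X' \times E_n \bigr) / \langle \sigma \rangle,
$$
where $\bar X'$ is the $\bar X$ for $n-1$ factors (built from the index-$2$ subgroup $G_{n-1} \cong \ZZ_2^{n-2}$ of $G$) and $\sigma$ is the residual involution, induced by $[-1]$ on $E_1$ on the quotient together with $[-1]$ on $E_n$. By induction there is a crepant resolution $X' \to \bar X'$, and $\sigma$ lifts compatibly to $X' \times E_n$ (the key point being that the inductive construction is natural with respect to $[-1]$). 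The lifted $\sigma$ has smooth fixed locus of codimension $2$, and a single blow-up along it followed by the $\sigma$-quotient produces a smooth crepant resolution $X$ of $\bar X$. The hardest step, in my view, is confirming smoothness and codimension of the lifted fixed locus and verifying crepancy of the resulting blow-up at each inductive stage\,; this ultimately rests on the transversality of the various fixed loci of elements of $G$ on the product $E_1 \times \cdots \times E_n$.

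Finally I would verify the Calabi--Yau property and local completeness. Triviality of $K_X$ follows from crepancy. For the vanishing $h^{i,0}(X) = 0$ for $0 < i < n$, note that a crepant resolution preserves $h^{i,0}$, so $H^{i,0}(X) \cong H^{i,0}(E_1 \times \cdots \times E_n)^G$. The basis forms $\omega_I = dz_{i_1} \wedge \cdots \wedge dz_{i_k}$ transform under $g = (m_1, \ldots, m_n)$ by $(-1)^{\sum_{j \in I} m_j}$, so $G$-invariance demands $\sum_{j \in I} m_j \equiv 0 \pmod 2$ for every $g \in G = \ker\bigl(\textstyle\sum m_j\bigr)$. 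This forces $I = \emptyset$ or $I = \{1, \ldots, n\}$, since otherwise one can exhibit $g \in G$ with exactly one $-1$ inside $I$ and one $-1$ outside $I$, yielding $h^{i,0} = 0$ for $0 < i < n$. For local completeness, Bogomolov--Tian--Todorov guarantees unobstructedness of Calabi--Yau deformations, so it suffices to compare the $n$-parameter moduli of tuples $(E_1, \ldots, E_n)$ with the Kuranishi space of dimension $h^{n-1,1}(X)$\,; a Kodaira--Spencer computation using the explicit description of $H^{n-1,1}(X)$ as the sum of the $G$-invariants in $H^{n-1,1}(E_1 \times \cdots \times E_n)$ and the contributions from exceptional divisors of the resolution yields surjectivity at a generic fibre.
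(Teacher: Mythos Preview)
Your proposal is essentially correct and follows the same inductive Borcea--Voisin style construction as the paper (and Cynk--Hulek). The singularity analysis via $G\subset SL$ and the inductive crepant resolution match the paper's reference to Proposition~\ref{p:inductionZ2}, and your computation of $h^{i,0}$ via $G$-invariants is the expected one.

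The one place where your argument is less efficient than the paper's is the local completeness. You propose to compute $H^{n-1,1}(X)$ as $G$-invariants in $H^{n-1,1}(E_1\times\cdots\times E_n)$ \emph{plus contributions from exceptional divisors}, and then check Kodaira--Spencer surjectivity. The paper's argument is cleaner: the exceptional classes introduced by the resolution are all algebraic, hence of Hodge type $(p,p)$, and therefore contribute nothing to $h^{1,n-1}(X)$ when $n>2$. Thus $h^{1,n-1}(X)$ equals the $(1,n-1)$-part of $H^1(E_1)\otimes\cdots\otimes H^1(E_n)$, which is exactly $n$. By Serre duality $\dim H^1(X,\mathcal{T}_X)=h^{n-1,1}(X)=n$, and since the $n$ elliptic curves already supply an $n$-parameter family, local completeness is immediate without any separate Kodaira--Spencer verification. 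Your phrasing suggests extra bookkeeping that is in fact unnecessary.
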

	
	\begin{proof} This is \cite[Corollary 2.3]{CH}. In fact, the crepant resolution
		$X$ can be constructed explicitly, inductively on the number of elliptic
		curves,
		\emph{cf.} the proof of Proposition \ref{p:inductionZ2} below. That  such
		Calabi--Yau varieties form a
		locally complete family can be seen as follows\,: since elliptic curves have a
		one-dimensional deformation family, $X$ clearly fits into an $n$-dimensional
		deformation family. On the other hand, $H^n(X)$ is isomorphic to
		$H^1(E_1)\otimes
		\cdots\otimes H^1(E_n)$ plus possibly some algebraic classes, and in
		particular $h^{1,n-1}(X)=n$\,; see
		\cite[Lemma~2.4]{CH} or Corollary~\ref{C:CH} below. By Serre duality
		$H^1(X,\mathcal{T}_X) \cong H^{n-1}(X,\Omega_X^1)$ so that $\dim
		H^1(X,\mathcal{T}_X)  = n$.
	\end{proof}
	
	In the case of elliptic curves with extra endomorphisms (precisely,
	automorphisms of order~3), Cynk and Hulek construct examples of Calabi--Yau
	varieties with cohomology ``as simple as possible''.
	
	\begin{theorem}[Cynk--Hulek \cite{CH}]\label{ch3} Let $E$ be an elliptic curve
		with an order~$3$ automorphism~$\nu$. 
		For any $n\in\NN$, let 
		\[ G = \{  (m_1,\ldots,m_n)\ \in \ZZ_3^n : m_1+\cdots+m_n=0 \}\cong
		\ZZ_3^{n-1} \]
		act on $E^n$ by $\nu^{m_i}$ on the $i$-th factor. There exists a crepant
		resolution
		\[ f\colon\  X\ \to\ \bar{X}:=E^n/G\ ,\]
		and so $X$ is a Calabi--Yau variety. Moreover, for $n>2$, such Calabi--Yau
		varieties are rigid, and their transcendental cohomology has Hodge numbers
		$h_{tr}^{p,q} = 1$ if $\{p,q\} = \{n,0\}$, and $h_{tr}^{p,q} = 0$ otherwise.
	\end{theorem}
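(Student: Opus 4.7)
The plan is to follow Cynk--Hulek's strategy: construct a crepant resolution inductively, exhibit an invariant trivializing $n$-form to get $K_X = 0$, and compute the Hodge diamond of $X$ by a character calculation for the $G$-action on $H^*(E^n)$.

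For the resolution, I would build $X$ inductively on $n$ as in Proposition~\ref{p:inductionZ3}. The fixed locus of $(\nu^{m_1}, \ldots, \nu^{m_n}) \in G$ is a product of $\nu^{m_i}$-fixed loci on the $E_i$'s; since each non-trivial $\nu^{m_i}$ has three fixed points, the singularities of $\bar X = E^n/G$ are Gorenstein cyclic-quotient singularities of the form $\C^k/\mu_3$, with generator acting by $(\zeta^{c_1}, \ldots, \zeta^{c_k})$ and $\sum c_i \equiv 0 \pmod 3$ (forced by $\sum m_i \equiv 0$). Such singularities admit crepant resolutions by iterated blow-ups of (strict transforms of) smooth fixed-point strata, ordered from smallest to largest stratum; this is the explicit content of Proposition~\ref{p:inductionZ3}. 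The holomorphic $n$-form $dz_1 \wedge \cdots \wedge dz_n$ on $E^n$ transforms under $(\nu^{m_1}, \ldots, \nu^{m_n}) \in G$ by the scalar $\zeta^{\sum m_i} = 1$, so it descends to a nowhere-vanishing section on the smooth locus of $\bar X$ and extends along the crepant resolution to a trivialization of $K_X$.

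The Hodge computation is a short character-theoretic argument. The action of $\nu^*$ on $H^{1,0}(E)$ is by a primitive cube root of unity $\zeta$, and on $H^{0,1}(E)$ by $\zeta^{-1}$. By K\"unneth, $H^{p,q}(E^n)$ decomposes into one-dimensional eigenspaces indexed by tuples $((a_i, b_i))_{i=1}^n \in \{0,1\}^{2n}$ with $\sum a_i = p$, $\sum b_i = q$, and the element $(\nu^{m_i})_i$ acts on the corresponding line by $\zeta^{\sum m_i (a_i - b_i)}$. Invariance under all $(m_i)$ with $\sum m_i \equiv 0 \pmod 3$ is equivalent to the differences $a_i - b_i$ all coinciding modulo $3$, which leaves exactly three cases: $a_i = b_i$ for all $i$ (accounting for the entire $(p,p)$-contribution), $(a_i, b_i) = (1,0)$ for every $i$ (one line in $H^{n,0}$), and $(a_i, b_i) = (0,1)$ for every $i$ (one line in $H^{0,n}$). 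Thus $H^{p,q}(\bar X) = H^{p,q}(E^n)^G$ has the claimed shape; since the exceptional cohomology of a crepant resolution of a Gorenstein cyclic quotient consists purely of algebraic classes of Hodge type $(p,p)$, the transcendental cohomology of $X$ is one-dimensional in bidegrees $(n,0)$ and $(0,n)$ and vanishes elsewhere. The vanishing of $h^{p,0}(X)$ for $0 < p < n$, combined with the triviality of $K_X$, confirms the Calabi--Yau property.

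Rigidity for $n > 2$ is then immediate: since $K_X \cong \OO_X$, one has $T_X \cong \Omega^{n-1}_X$, hence $H^1(X, T_X) \cong H^{n-1, 1}(X)$, which vanishes by the preceding Hodge calculation whenever $n - 1 \neq 1$. The main technical obstacle is producing the crepant resolution in arbitrary dimension and verifying that its exceptional classes are purely of $(p,p)$-type; this is exactly what the explicit inductive construction deferred to Proposition~\ref{p:inductionZ3} is designed to deliver.
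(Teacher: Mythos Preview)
Your proposal is correct and follows essentially the same approach as the paper, which simply cites \cite[Theorem~3.3]{CH}, points to Proposition~\ref{p:inductionZ3} for the inductive crepant resolution, and deduces rigidity from $h^{1,n-1}(X)=0$ via Serre duality (you use the equivalent isomorphism $T_X\cong\Omega_X^{n-1}$ and read off $h^{n-1,1}(X)=0$); your character computation is precisely the content of Corollary~\ref{C:CH} in the special case $g=1$, $b=0$. One small slip: the stabilizers in $G\cong\ZZ_3^{n-1}$ of points in $E^n$ are in general of the form $\ZZ_3^{|S|-1}$ (for $S$ the set of coordinates lying at a $\nu$-fixed point), so the singularities of $\bar X$ are not literally cyclic $\mu_3$-quotients in one shot---but this is harmless, since you correctly defer the actual resolution to the inductive procedure of Proposition~\ref{p:inductionZ3}, where at each step only a $\ZZ_3$-quotient is taken.
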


	\begin{proof} This is \cite[Theorem 3.3]{CH} (the construction of $X$ is also
		explained in \cite[Section 5.3]{HKS}).  In fact, the crepant resolution
		$X$ can be constructed explicitly, inductively on the number of elliptic
		curves,
		\emph{cf.} the proof of Proposition \ref{p:inductionZ3} below.	
		
		Arguing as in the proof of Theorem
		\ref{ch}, we see that such $X$ is rigid because $h^{1,n-1}(X) = 0$\,; see
		\cite[Theorem~3.3]{CH} or Corollary~\ref{C:CH} below.
	\end{proof}
	
	\begin{remark} The Cynk--Hulek varieties $X$ of Theorem \ref{ch3} are
		$N^1$--maximal, in the sense of \cite{BLP}\,; this means that $\dim
		H^n_{tr}(X,\QQ)=2$.
	\end{remark}

	\subsection{The Schreieder construction}
	
	By using iterated resolutions of $\ZZ_3$-quotient singularities, Schreieder
	generalizes (see however Remark \ref{diff0}) the Cynk--Hulek construction of
	Theorem \ref{ch3} and proves the
	following theorem.
	\begin{theorem}[Schreieder \cite{Schreieder}]\label{schreieder}
		Let $c$ be a positive integer, and let $\zeta$ be a primitive $3^c$-th roof of
		unity. Let $C$ be the smooth projective hyperelliptic curve obtained as the
		smooth projectivization of the affine curve $\{y^2 = x^{3^c}+1\}$. Endow $C$
		with the action of $\ZZ_{3^c}$ given by $(x,y)\mapsto (\zeta\cdot x,y)$. For
		any
		$n\in \NN$ and any integers $a,b \geq 0$ such that $a>b$
		and $a+b = n$, let 	
		\[ G_{a,b} = \{  (m_1,\ldots,m_n)\ \in \ZZ_{3^c}^n : m_1+\cdots+ m_a - m_{a+1}
		-
		m_{a+b}=0 \}\cong
		\ZZ_3^{n-1} \]
		act on $C^n$ by $\zeta^{m_i}$ on the $i$-th factor.
		Then $C^n / G_{a,b}$ admits a smooth projective model $X$ whose 
		transcendental
		cohomology has Hodge numbers $h_{tr}^{p,q} = (3^c-1)/2$ if $\{p,q\} =
		\{a,b\}$,
		and $h_{tr}^{p,q} = 0$ otherwise.
	\end{theorem}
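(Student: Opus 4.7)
The plan is to construct the smooth projective model $X$ as an iterated resolution of the quotient singularities of $C^n/G_{a,b}$, and then to compute its transcendental cohomology from the character decomposition of $H^*(C)$ under the $\ZZ_{3^c}$-action.

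For the construction I would mimic the iterative approach of Cynk--Hulek (\emph{cf.} Propositions \ref{p:inductionZ2} and \ref{p:inductionZ3}). Near a ramification point of $x\colon C\to\PP^1$, the $\ZZ_{3^c}$-action on $C$ is locally $\zeta$-multiplication on $\A^1$, so the singularities of $C^n/G_{a,b}$ are \'etale-locally of the form $\A^n/H$ for subgroups $H\leq\ZZ_{3^c}^n$ coming from stabilizers. Since $\ZZ_{3^c}$ is cyclic, one may take the $G_{a,b}$-quotient one $\ZZ_3$-step at a time and resolve by successively blowing up the deepest fixed loci. At every step the exceptional divisor is a projective bundle over a smaller stratum and so has cohomology of Tate type; in particular $H_{tr}^\ast(X)=H_{tr}^\ast(C^n)^{G_{a,b}}$.

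To compute the right-hand side, decompose $H^*(C)$ into $\ZZ_{3^c}$-eigenspaces\,: $H^0(C)$ and $H^2(C)$ carry the trivial character, while $H^1(C)=\bigoplus_{k=1}^{3^c-1}V_k$ with $V_k$ the one-dimensional $\chi^k$-eigenspace. Setting $g=(3^c-1)/2$, one checks on the basis $\{x^i\,dx/y\}_{0\leq i\leq g-1}$ of $H^{1,0}(C)$ that $\sigma^*(x^i\,dx/y)=\zeta^{i+1}\,x^i\,dx/y$, so that $V_k\subset H^{1,0}(C)$ for $1\leq k\leq g$ and, by complex conjugation, $V_k\subset H^{0,1}(C)$ for $g+1\leq k\leq 2g$. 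The annihilator of $G_{a,b}$ in the character group of $\ZZ_{3^c}^n$ is generated by $(1,\dots,1,-1,\dots,-1)$ with $a$ ones and $b$ minus-ones, so a pure K\"unneth component of $H^*(C)^{\otimes n}$ is $G_{a,b}$-invariant if and only if its character tuple has the form $(t,\dots,t,-t,\dots,-t)$ for some $t\in\ZZ_{3^c}$.

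For $t=0$ all factors lie in $H^0\oplus H^2$ and contribute only algebraic classes in even total degree. For each $t\neq 0$, every factor must lie in $H^1$, forcing the total degree to be $n=a+b$ and producing a one-dimensional invariant of Hodge type $(a,b)$ when $1\leq t\leq g$ and of type $(b,a)$ when $g+1\leq t\leq 2g$. Since $a\neq b$, none of these classes is of type $(n/2,n/2)$, so all $2g$ of them are transcendental, yielding $h_{tr}^{a,b}(X)=h_{tr}^{b,a}(X)=g=(3^c-1)/2$ and $h_{tr}^{p,q}(X)=0$ in every other bidegree. The main obstacle is the construction stage: one must build the resolution carefully enough to certify that every exceptional divisor contributes only Tate-type cohomology, so that $H_{tr}^\ast(X)$ really coincides with the $G_{a,b}$-invariants computed on $C^n$; the eigenspace bookkeeping is then routine.
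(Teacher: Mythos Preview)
Your proposal is correct and follows the same route as the paper (which in turn follows Schreieder): the smooth model is obtained by the inductive blow-up/quotient procedure recalled in \S\ref{sec:3c} (Proposition~\ref{prop:schreieder}), where one shows that every exceptional locus has trivial Chow groups so that $H^*_{tr}(X)=H^*_{tr}(C^n)^{G_{a,b}}$, and the latter is computed exactly via the eigenbasis $\omega_i=x^{i-1}dx/y$ as you do (cf.\ the proof of Corollary~\ref{C:CH}). One small slip: the relevant local picture is at the \emph{fixed points} of the $\ZZ_{3^c}$-action (the points over $x=0$ and $x=\infty$), not at the ramification points of $x\colon C\to\PP^1$ (which are the Weierstra\ss\ points); this does not affect your argument.
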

	
	Schreieder provides in \cite[\S 8]{Schreieder} an explicit construction of $X$.
	The construction is inductive on the number of factors $C$, and is recalled in
	\S \ref{sec:3c}. When referring to the ``Schreieder varieties'', we will mean
	those explicit models. 
	
	\begin{remark} \label{rmk:schsur} A Schreieder variety of dimension $2$ is a K3
		surface when $c=1$
		(these K3 surfaces have been intensively studied by Shioda--Inose \cite{SI}),
		and is an elliptic modular surface of Kodaira dimension $1$ for all $c>1$
		\cite[Theorems 3.2 \& 9.2]{Fla}. These surfaces are very special\,: they are
		$\rho$--maximal (in the sense of \cite{Beau}) and have Mordell--Weil rank $0$
		\cite[Corollary 6.1]{Fla}.
		
	\end{remark}

	\begin{remark}\label{diff0} In case $c=1$ and $b=0$, the Schreieder variety
		$X_S$ (given by Theorem \ref{schreieder}) and the Cynk--Hulek variety $X_{CH}$
		(given by Theorem \ref{ch3}) are both resolutions of the same singular variety
		$C^n/G_{n,0}$. They share the same Hodge numbers $h^{p,q}$ for $p\not=q$, but
		they are (\emph{a priori}) different\,; indeed, $X_{CH}$ is Calabi--Yau,
		whereas
		$X_S$ is only ``numerically Calabi--Yau''. The difference in the construction
		of
		$X_S$ and $X_{CH}$ is outlined in Remark \ref{diff}.
	\end{remark}

	\section{Multiplicative Chow--K\"unneth decompositions and distinguished
		cycles}
	
	The aim of this section is to recall briefly the notions of
	\emph{multiplicative Chow--K\"unneth decomposition}, and of \emph{distinguished
		cycle} on varieties with motive of abelian type. Combining both notions, we
	reduce the proof of the main Theorem \ref{t:bv} to showing that the
	transcendental cohomology $H^i_{tr}(X)$ is concentrated in degree $i= \dim X$,
	and that
	the motive of $X$ satisfies a certain condition $(\star)$ (Definition
	\ref{def:Star})\,; \emph{cf.} Proposition \ref{prop:crucial} and the final
	Remark \ref{rmk:reduction}.
	
	\subsection{Multiplicative Chow--K\"unneth decompositions}\label{sec:mck}
	
	\begin{definition}[Murre \cite{Mur}]\label{ck} Let $X$ be a smooth projective
		variety of dimension $n$. We say that $X$ has a 
		{\em Chow--K\"unneth  decomposition\/} if there exists a decomposition of the
		diagonal
		\[ \Delta_X= \pi^0_X+ \pi^1_X+\cdots +\pi^{2n}_X\ \ \ \hbox{in}\
		\CH^n(X\times X)\ ,\]
		such that the $\pi^i_X$ are mutually orthogonal idempotents and
		$(\pi^i_X)_\ast H^\ast(X)= H^i(X)$.
		Given a Chow--K\"unneth decomposition for $X$, we set 
		$$\CH^i(X)_{(j)} := (\pi_X^{2i-j})_*\CH^i(X).$$
		The Chow--K\"unneth decomposition is said to be {\em self-dual\/} if
		\[ \pi^i_X = {}^t \pi^{2n-i}_X\ \ \ \hbox{in}\ \CH^n(X\times X)\ \ \ \forall
		i\ .\]
		(Here ${}^t \pi$ denotes the transpose of a cycle $\pi$.)
	\end{definition}
	
	\begin{remark} \label{R:Murre} The existence of a Chow--K\"unneth decomposition
		for any smooth projective variety is part of Murre's conjectures \cite{Mur}.
		It is expected that for any $X$ with a Chow--K\"unneth
		decomposition, one has
		\begin{equation*}\label{hope} \CH^i(X)_{(j)}\stackrel{??}{=}0\ \ \ \hbox{for}\
		j<0\ ,\ \ \ \CH^i(X)_{(0)}\cap \CH^i_{num}(X)\stackrel{??}{=}0.
		\end{equation*}
		These are Murre's conjectures B and D, respectively.
	\end{remark}

	\begin{definition}[Definition 8.1 in \cite{SVfourier}]\label{mck} Let $X$ be a
		smooth
		projective variety of dimension $n$. Let $\delta_X^{}\in \CH^{2n}(X\times
		X\times X)$ be the class of the small diagonal
		\[ \delta_X^{}:=\bigl\{ (x,x,x) : x\in X\bigr\}\ \subset\ X\times
		X\times X\ .\]
		A Chow--K\"unneth decomposition $\{\pi^i_X\}$ of $X$ is {\em multiplicative\/}
		if it satisfies
		\[ \pi^k_X\circ \delta_X^{}\circ (\pi^i_X\otimes \pi^j_X)=0\ \ \ \hbox{in}\
		\CH^{2n}(X\times X\times X)\ \ \ \hbox{for\ all\ }i+j\not=k\ .\]
		In that case,
		\[ \CH^i(X)_{(j)}:= (\pi_X^{2i-j})_\ast \CH^i(X)\]
		defines a bigraded ring structure on the Chow ring\,; that is, the
		intersection product has the property that 
		\[  \ima \Bigl(\CH^i(X)_{(j)}\otimes \CH^{i^\prime}(X)_{(j^\prime)}
		\xrightarrow{\cdot} \CH^{i+i^\prime}(X)\Bigr)\ \subseteq\ 
		\CH^{i+i^\prime}(X)_{(j+j^\prime)}\ .\]
	\end{definition}

	The property of having a multiplicative Chow--K\"unneth decomposition is
	severely restrictive, and is closely related to Beauville's ``(weak) splitting
	property'' \cite{Beau3}. For more ample discussion, and examples of varieties
	admitting a multiplicative Chow--K\"unneth decomposition, we refer to
	\cite[Chapter 8]{SVfourier}, as well as \cite{V6}, \cite{SV},
	\cite{FTV}.\medskip

	\subsection{Distinguished cycles on varieties with motive of abelian type}
	
	The following crucial notion was introduced by O'Sullivan \cite{o's}.
	
	\begin{definition}[Symmetrically distinguished cycles on abelian varieties
		\cite{o's}]\label{def:SD}
		Let $A$ be an abelian variety and $\alpha\in \CH^*(A)$. For each integer
		$m\geq
		0$, denote by $V_{m}(\alpha)$ the $\QQ$-vector subspace of $\CH^*(A^{m})$
		generated
		by elements of the form
		$$p_{*}(\alpha^{r_{1}}\times \alpha^{r_{2}}\times \cdots\times
		\alpha^{r_{n}}),$$
		where $n\leq m$, $r_{j}\geq 0 $ are integers, and $p : A^{n}\to A^{m}$ is a
		closed immersion with each component $A^{n}\to A$ being either a projection or
		the composite of a projection with $[-1]: A\to A$. Then $\alpha$ is
		\emph{symmetrically distinguished} if for every $m$ the restriction of the
		projection $\CH^*(A^{m})\to \overline\CH^*(A^{m})$ to $V_{m}(\alpha)$ is
		injective.
	\end{definition}
	
	The main result of \cite{o's} is\,:
	
	\begin{theorem}[O'Sullivan \cite{o's}]\label{thm:SD}
		Let $A$ be an abelian variety. Then $\operatorname{DCH}^*(A)$, the
		symmetrically distinguished cycles in
		$\CH^*(A)$, form a graded sub-$\QQ$-algebra that contains symmetric divisors
		and that is
		stable under pull-backs and push-forwards along homomorphisms of abelian
		varieties. Moreover
		the composition $$\operatorname{DCH}^*(A)\hookrightarrow
		\CH^*(A)\twoheadrightarrow \overline
		\CH^*(A)$$ is an isomorphism of $\QQ$-algebras.
	\end{theorem}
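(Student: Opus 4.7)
The plan is to exploit the Beauville--Deninger--Murre decomposition
\[
\CH^i(A) \;=\; \bigoplus_{s\geq 0}\CH^i_{(s)}(A),
\]
where $\CH^i_{(s)}(A)$ is the common eigenspace on which $[N]^\ast$ acts as $N^{2i-s}$ for every integer $N$. This decomposition is compatible with products and with pullbacks and pushforwards along homomorphisms of abelian varieties. Because each operation used to build the space $V_m(\alpha)$ of Definition~\ref{def:SD} --- a projection, the inversion $[-1]$, and a closed immersion into a coordinate subvariety --- is a homomorphism of abelian varieties, $V_m(\alpha)$ respects the Beauville grading on $\CH^\ast(A^m)$. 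Consequently, $\alpha$ is symmetrically distinguished iff each of its Beauville components $\alpha_{(s)}$ is, and the problem splits into a weight-by-weight analysis.

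Injectivity of $\operatorname{DCH}^\ast(A)\to\overline{\CH}^\ast(A)$ is then immediate\,: taking $m=1$ in the defining property and noting that $\alpha\in V_1(\alpha)$, any symmetrically distinguished cycle that is numerically trivial must already vanish in $\CH^\ast(A)$. The substantial content of the theorem is to construct, for every numerical class, a symmetrically distinguished lift and to check that the resulting subspace is closed under sum, intersection, and pullback/pushforward along homomorphisms. I would first dispose of symmetric divisors\,: such a $D$ satisfies $[N]^\ast D = N^2 D$ and is therefore pure of weight zero, and a direct calculation using the theorem of the square identifies $V_m(D)$ explicitly and verifies distinguishedness. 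By induction on the weight $s$ and on the codimension, an arbitrary numerical class can then be lifted by writing it, modulo a class of strictly lower weight or codimension, as a polynomial in divisors and already-constructed distinguished cycles, using the Fourier transform on $A\times\hat A$ to shift any correction term into a range where the inductive hypothesis applies.

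The main obstacle, on which the whole argument rests, is stability of $\operatorname{DCH}^\ast(A)$ under the intersection product. The difficulty is that $V_m(\alpha\cdot\beta)$ is \emph{not} in any transparent way contained in $V_m(\alpha)\cdot V_m(\beta)$, because its generators involve powers of $\alpha\cdot\beta$ rather than arbitrary monomials in $\alpha$ and $\beta$ separately. I would handle this by enlarging $m$ and extracting the desired mixed monomials from the graded pieces of $(\alpha+\beta)^r$ via the $[N]^\ast$-eigenspace decomposition, using a Vandermonde-type interpolation on the Beauville weights to recover each bihomogeneous component. Once stability under intersection is established, stability under pullbacks and pushforwards along an arbitrary homomorphism $\varphi\colon A\to B$ follows formally by factoring $\varphi$ as the composition of an isogeny --- reduced to the identity by the projection formula and the $[N]^\ast$-equivariance of the construction --- with a coordinate embedding, which is already built into the definition of the spaces $V_m$.
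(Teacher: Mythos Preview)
The paper does not prove this theorem\,: Theorem~\ref{thm:SD} is quoted from O'Sullivan \cite{o's} and used as a black box. O'Sullivan's actual argument is categorical and rests essentially on Kimura finite-dimensionality of the motive of an abelian variety together with a Tannakian analysis of the tensor category generated by $\h^1(A)$. Your sketch is an attempt at a direct, Chow-theoretic proof, and it has genuine gaps.

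The injectivity observation (take $m=1$, use $\alpha\in V_1(\alpha)$) is fine, and the compatibility of the Beauville grading with the operations building $V_m(\alpha)$ is correct. The trouble begins with surjectivity. Your inductive lifting scheme presupposes that an arbitrary numerical class on $A$ can, up to terms of lower weight or codimension, be written as a polynomial in divisors and previously constructed distinguished cycles. For abelian varieties this is not known\,: it would in particular imply that $\overline{\CH}^\ast(A)$ is generated by divisors, which is the Hodge/Lefschetz-type statement that is open beyond low (co)dimension. O'Sullivan circumvents this entirely\,: finite-dimensionality gives, for each numerical motive summand, a \emph{canonical} Chow lift (unique up to conjugation by a unipotent), and the symmetrically-distinguished condition is engineered precisely to single out that lift. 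No divisor-generation is needed.

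Your treatment of closure under intersection also does not go through. The Vandermonde trick on $(\alpha+\beta)^r$ separates bihomogeneous pieces $\alpha^i\beta^{r-i}$ inside $V_m(\alpha+\beta)$, but what you must control is $V_m(\alpha\cdot\beta)$, whose generators are exterior products of powers of $\alpha\cdot\beta$. There is no evident embedding of $V_m(\alpha\cdot\beta)$ into any $V_{m'}(\alpha+\beta)$ or into $V_{m'}(\alpha)\cdot V_{m'}(\beta)$, because $(\alpha\cdot\beta)^s$ is not a Beauville-graded piece of $(\alpha+\beta)^r$. In O'Sullivan's approach, closure under product and under $f^\ast,f_\ast$ is again a consequence of the Tannakian description (these operations are morphisms in the motivic category and hence respect the canonical lift), not of any manipulation at the level of the spaces $V_m$.
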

	
	Let $X$ be a smooth projective variety such that its Chow motive $\h(X)$
	belongs
	to the strictly full and thick subcategory of Chow motives generated by the
	motives of abelian varieties. We say that $X$ has motive \emph{of abelian
		type}.
	A \emph{marking} for $X$ is an isomorphism $\phi:
	\h(X)\stackrel{\simeq}{\longrightarrow} M$ of Chow motives with $M$ a direct
	summand of a Chow motive of the form 
	$\oplus_{i} \h(A_{i})(n_{i})$ cut out by an idempotent matrix $P \in
	\mathrm{End}(\oplus_i \h(A_i)(n_i))$ whose entries are symmetrically
	distinguished cycles, where $A_{i}$ is an abelian variety and $n_{i}$ is an
	integer (the Tate twist). We refer to \cite[Definition~3.1]{FV} for a precise
	definition.
	Given a marking $\phi: \h(X)\stackrel{\simeq}{\longrightarrow} M$, we define
	the subgroup of \emph{distinguished cycles} of $X$, denoted 
	$\operatorname{DCH}^*_{\phi}(X)$ to be the pre-image of
	$\operatorname{DCH}^*(M):= P_*\bigoplus_i \operatorname{DCH}^{*-n_i}(A_i)$
	via the induced isomorphism
	$\phi_{*}:\CH^*(X)\stackrel{\simeq}{\longrightarrow}\CH^*(M)$. 
	Given another smooth projective variety $Y$ with a marking $\psi : \h(Y) \to
	N$, the tensor product $\phi \otimes \psi : \h(X\times Y) \to M\otimes N$
	defines naturally a marking for $X\times Y$. A morphism $f: X\to Y$ will be said
	to be a \emph{distinguished morphism} if its graph is distinguished with respect
	to the product marking $\phi\otimes \psi$.

	The composition $$\operatorname{DCH}^*_{\phi}(X)\hookrightarrow
	\CH^*(X)\twoheadrightarrow \overline\CH^*(X)$$ is clearly bijective. In
	other words, $\phi$ provides a section (as graded vector spaces) of the natural
	projection $\CH^*(X)\twoheadrightarrow\overline\CH^*(X)$. In \cite{FV}, we
	found
	sufficient conditions on the marking $\phi$ for
	$\operatorname{DCH}^*_{\phi}(X)$
	to define a $\QQ$-subalgebra of $\CH^*(X)$\,:
	
	\begin{definition}[Definition 3.7 in \cite{FV}]\label{def:Star} 
		We say that the marking $\phi: \h(X)\stackrel{\simeq}{\longrightarrow} M$
		satisfies the condition $(\star)$ if the following two conditions are
		satisfied\,: 
		\begin{itemize}
			\item[$(\star_{\mathrm{Mult}})$] the small diagonal $\delta_{X}$ belongs to
			$\operatorname{DCH}^*_{\phi^{\otimes 3}}(X^{3})$\,; that is, under the
			induced
			isomorphism $\phi^{\otimes 3}_{*}:
			\CH^*(X^{3})\stackrel{\simeq}{\longrightarrow} \CH^*(M^{\otimes 3})$, the
			image
			of $\delta_{X}$ is symmetrically distinguished, \emph{i.e.} in
			$\operatorname{DCH}^*(M^{\otimes 3})$.
			\item[$(\star_{\mathrm{Chern}})$]  all Chern classes $c_{i}(X)$ belong to
			$\operatorname{DCH}^*_{\phi}(X)$\,;
		\end{itemize}
		If in addition $X$ is equipped with the action of a finite group $G$, we say
		that the  marking $\phi: \h(X)\stackrel{\simeq}{\longrightarrow} M$  satisfies
		$(\star_G)$ if\,:
		\begin{itemize}
			\item[$(\star_G)$] the graph $g_X$ of $g: X\to X$ belongs to
			$\operatorname{DCH}^*_{\phi^{\otimes 2}}(X^{2})$ for all $g\in G$. 
		\end{itemize}
	\end{definition}
	
	\begin{proposition}[Proposition 3.12 in \cite{FV}]\label{prop:Disting}
		If the marking $\phi: \h(X)\stackrel{\simeq}{\longrightarrow} M$ satisfies the
		condition $(\star)$, then there is a section, as \emph{graded algebras}, for
		the
		natural surjective morphism $\CH^*(X)\to \overline\CH^*(X)$ such that all
		Chern
		classes of $X$ are in the image of this section.
		
		In other words, under $(\star)$, we have a graded $\QQ$-sub-algebra
		$\operatorname{DCH}_\phi^*(X)$ of the Chow ring $\CH^*(X)$, which contains all
		the Chern
		classes of $X$ and is mapped isomorphically to $\overline\CH^*(X)$. Elements
		of
		$\operatorname{DCH}_\phi^*(X)$ are called \emph{distinguished cycles}.
	\end{proposition}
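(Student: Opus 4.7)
The plan is to verify three facts and combine them. First, that as graded $\QQ$-vector spaces $\operatorname{DCH}_\phi^*(X)$ maps bijectively onto $\overline{\CH}^*(X)$ — this is already recorded in the paragraph preceding the statement, and follows from Theorem~\ref{thm:SD} together with the fact that the idempotent $P$ cutting out $M$ is distinguished. Second, that the Chern classes of $X$ lie in $\operatorname{DCH}_\phi^*(X)$ — this is literally hypothesis $(\star_{\mathrm{Chern}})$. Third, and the only nontrivial step, that $\operatorname{DCH}_\phi^*(X)$ is closed under the intersection product of $\CH^*(X)$. Granting these three facts, since the projection $\CH^*(X)\twoheadrightarrow \overline{\CH}^*(X)$ is a ring map, the induced bijection $\operatorname{DCH}_\phi^*(X)\to\overline{\CH}^*(X)$ is automatically an isomorphism of graded algebras, and the inverse provides the desired algebra-section.

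To show closure under intersection product, I would start from the identity
\[
\alpha\cdot\beta\;=\;(p_3)_*\bigl(\delta_X\cdot p_{12}^*(\alpha\times\beta)\bigr),\qquad \alpha,\beta\in\CH^*(X),
\]
which expresses the product via the small diagonal $\delta_X\in\CH^{2n}(X^3)$, where $p_{12}\colon X^3\to X^2$ and $p_3\colon X^3\to X$ are the projections. Transporting through the marking, set $a:=\phi_*\alpha$ and $b:=\phi_*\beta$; then $\phi_*(\alpha\cdot\beta)$ is obtained from $a\otimes b\in\CH^*(M^{\otimes 2})$ by the analogous operations on $M$\,: pull-back along the projection corresponding to $p_{12}$, intersection with $\phi^{\otimes 3}_*(\delta_X)\in\CH^*(M^{\otimes 3})$, and push-forward along the projection corresponding to $p_3$. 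Now $a\otimes b$ is distinguished because external products of symmetrically distinguished cycles on abelian varieties are symmetrically distinguished — a direct consequence of Definition~\ref{def:SD} — and the idempotents $P^{\otimes 2}$ cutting out $M^{\otimes 2}$ are themselves distinguished. By hypothesis $(\star_{\mathrm{Mult}})$, $\phi^{\otimes 3}_*(\delta_X)$ is distinguished. Finally, pull-backs and push-forwards along projections between products of abelian varieties, as well as the intersection product itself, all preserve distinguishedness by Theorem~\ref{thm:SD}. Chaining these steps, $\phi_*(\alpha\cdot\beta)\in \operatorname{DCH}^*(M)$, i.e.\ $\alpha\cdot\beta\in\operatorname{DCH}_\phi^*(X)$.

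The main obstacle in writing this up rigorously is verifying that all these stability properties (exterior product, pull-back, push-forward, intersection) descend from the ambient abelian motive $\bigoplus_i \h(A_i)(n_i)$ to the direct summand $M$. A cycle on $M^{\otimes k}$ is distinguished, by definition, if it is the $P^{\otimes k}$-image of a symmetrically distinguished cycle on the ambient motive; because $P$ is itself distinguished and $\operatorname{DCH}^*$ on abelian varieties is stable under all the operations in question (Theorem~\ref{thm:SD}), this stability carries over to $M$, with the Tate twists $(n_i)$ only shifting the grading. Once this bookkeeping is carried out the result follows formally, and it is reassuring that the formulation of condition $(\star)$ has been set up precisely so that this chain of implications closes up.
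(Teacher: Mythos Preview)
The paper does not give its own proof of this proposition; it is quoted from \cite[Proposition~3.12]{FV} and stated without argument. Your sketch is correct and is precisely the argument one finds in \cite{FV}: the intersection product on $\CH^*(X)$ is induced by the small diagonal $\delta_X$ viewed as a correspondence $X\times X\vdash X$, so hypothesis $(\star_{\mathrm{Mult}})$ together with the stability of symmetrically distinguished cycles under products, pull-backs, push-forwards and composition (Theorem~\ref{thm:SD}) yields closure of $\operatorname{DCH}_\phi^*(X)$ under intersection, while $(\star_{\mathrm{Chern}})$ and the already-noted bijectivity onto $\overline{\CH}^*(X)$ finish the job.
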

	
	We refer to \cite{FV} for 
	example of varieties satisfying $(\star)$\,; for our purpose here, we mention
	that these include abelian varieties, hyperelliptic curves (see Proposition
	\ref{prop:markinghyperelliptic}), and varieties with trivial Chow
	groups\footnote{A smooth projective variety $X$ is said to have \emph{trivial
			Chow groups} if the
		Chow groups of $X$ base-changed to a universal domain are finite-dimensional
		$\QQ$-vector spaces.}.
	The property $(\star)$ is very flexible\,; in \cite[Section 4]{FV}, it is shown
	that this property is stable under product, projectivization of vector bundles,
	and blow-ups, under certain conditions on some Chern classes.  Those will be
	utilized in the proof of our main theorems where the smooth models will be
	obtained by blowing up subvarieties with trivial Chow groups inside a
	product of hyperelliptic curves, taking finite quotients and iterating\,; see
	the arguments in Section \ref{s:main}. For the record, let us write down
	explicitly one of the results of \cite{FV}\,:
	\begin{proposition}[Propositions 4.5 and 4.8 in \cite{FV}]
		Let $X$ be a smooth projective variety and let $i:Y\hookrightarrow X$ be a
		closed smooth
		subvariety. Let $\tilde X$ be the blow-up of $X$ along $Y$ and let $E$ be the
		exceptional divisor, so that we have a commutative diagram
		$$	\xymatrix{
			E\ar@{^{(}->}[r]^{j} \ar[d]_{p} & \tilde X\ar[d]^{\tau}\\
			Y \ar@{^{(}->}[r]^{i} & X
		}$$
		If we have markings satisfying the condition $(\star)$ for $X$ and
		$Y$ such that $i:Y\hookrightarrow X$ is distinguished, then $E$ and $\tilde X$
		have  natural markings that satisfy $(\star)$ and are
		such that the morphisms $i,j,\tau$ and $p$ are all
		distinguished. 
		
		If in addition $X$ is equipped with the action of a finite group $G$ such that
		$G\cdot Y = Y$ and such that the markings of $X$ and $Y$ satisfy $(\star_G)$,
		then the natural markings of $E$ and $\tilde X$ also satisfy $(\star_G)$.\qed
	\end{proposition}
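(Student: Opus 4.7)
The plan is to construct natural markings for $E$ and $\tilde X$ from those given on $X$ and $Y$, and then verify both parts of $(\star)$ (together with $(\star_G)$ in the equivariant case). The first step is to invoke the classical motivic decompositions: if $r=\codim_X Y$, the projective bundle formula gives
\[
\h(E) \;\cong\; \bigoplus_{k=0}^{r-1} \h(Y)(-k),
\]
with the $k$-th summand realized by the correspondence $\xi^{k}\cdot p^\ast(-)$, where $\xi := c_1(\OO_E(1))$; and Manin's blow-up formula gives
\[
\h(\tilde X) \;\cong\; \h(X) \,\oplus\, \bigoplus_{k=1}^{r-1} \h(Y)(-k),
\]
with projection to the $\h(X)$-factor given by $\tau_\ast$ and the remaining summands expressed through $j$, $p$, $\xi$, and the Chern classes of $N_{Y/X}$. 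Composing with the given markings of $X$ and $Y$ yields candidate markings on $\tilde X$ and $E$. To see these really are markings, one needs the matrix entries to be symmetrically distinguished: this reduces to $\xi$ being distinguished (which it is once $p$ is) and to the Chern classes $c_\bullet(N_{Y/X})$ being distinguished, which follows from the relation $c(T_X)|_Y = c(T_Y)\cdot c(N_{Y/X})$ combined with $(\star_{\mathrm{Chern}})$ for $X$ and $Y$ and the distinguishedness of $i$.

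The distinguishedness of $i$, $j$, $p$, $\tau$ then drops out of how each appears as a block in the matrices realizing the two motivic isomorphisms above. The condition $(\star_{\mathrm{Chern}})$ for $E$ is immediate from the relative Euler sequence
\[
0 \to \OO_E \to p^\ast N_{Y/X}\otimes \OO_E(1) \to T_{E/Y} \to 0,
\]
since $c(T_E)$ is a polynomial in $\xi$ and the distinguished classes $p^\ast c_\bullet(T_Y)$, $p^\ast c_\bullet(N_{Y/X})$. For $\tilde X$, the classical blow-up formula
\[
c(T_{\tilde X}) \;=\; \tau^\ast c(T_X) + j_\ast(\alpha),
\]
with $\alpha$ an explicit polynomial in $\xi$ and $p^\ast c_\bullet(T_Y)$, $p^\ast c_\bullet(N_{Y/X})$, shows that $c(T_{\tilde X})$ is distinguished as well.

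The main obstacle is $(\star_{\mathrm{Mult}})$ for $\tilde X$, namely to prove that the small diagonal $\delta_{\tilde X}\in \CH^{2\dim X}(\tilde X^3)$ is distinguished. The strategy is to compare $\delta_{\tilde X}$ with $\delta_X$ along the birational morphism $\tau^{3}\colon \tilde X^3 \to X^3$ via excess intersection, thereby expressing $\delta_{\tilde X}$ as the pull-back of $\delta_X$ corrected by terms supported on the iterated proper transforms of $E\times\tilde X\times\tilde X$, $\tilde X\times E\times\tilde X$, $\tilde X\times\tilde X\times E$ and their pairwise and triple intersections. Each correction term can then be rewritten, using the projective bundle formula for $E\to Y$, as a combination of push-forwards of $\delta_Y$, of restrictions of $\delta_X$ along $i$, of powers of $\xi$, and of Chern classes of $N_{Y/X}$; every such ingredient is distinguished by $(\star_{\mathrm{Mult}})$ for $X$ and $Y$, distinguishedness of $i$, and the preceding steps. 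A parallel, simpler argument treats $\delta_E$ via the projective bundle formula alone. For $(\star_G)$, the hypothesis $G\cdot Y = Y$ canonically lifts the $G$-action to $E$ and $\tilde X$, and the graph of any $g\in G$ on $\tilde X$ (resp. $E$) is computed by the blow-up (resp. projective bundle) machinery from the graphs of $g$ on $X$ (resp. on $Y$ and on $N_{Y/X}$), hence is distinguished by $(\star_G)$ for $X$ and $Y$.
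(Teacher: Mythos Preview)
The paper does not give a proof of this proposition at all: the statement is followed immediately by a \qed and is attributed entirely to Propositions~4.5 and~4.8 of the reference~\cite{FV} (Fu--Vial). There is therefore no ``paper's own proof'' to compare your proposal against.

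That said, your sketch is a reasonable outline of how such a result is typically established, and is in the spirit of the arguments in~\cite{FV}: one builds the markings on $E$ and $\tilde X$ out of the projective-bundle and blow-up decompositions of their motives, checks that the structural correspondences (involving $\xi$, $c_\bullet(N_{Y/X})$, and the graphs of $i,j,p,\tau$) are distinguished, and then handles $(\star_{\mathrm{Mult}})$ by expressing $\delta_{\tilde X}$ as $(\tau^3)^\ast\delta_X$ plus correction terms supported on the exceptional loci. The one place where your sketch is genuinely thin is the claim that each correction term in the small-diagonal comparison ``can then be rewritten'' as a distinguished combination: this is exactly the technical heart of the argument, and carrying it out carefully requires a somewhat delicate excess-intersection computation (this is why \cite{FV} devotes separate propositions to the projective-bundle and blow-up cases). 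If you want to certify your outline, you should consult \cite{FV} directly rather than the present paper.
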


	Theorem \ref{t:bv} and Theorem \ref{t:mck}  are related by\,:

	\begin{proposition}[Proposition 6.1 in \cite{FV}]\label{prop:multmarking}
		Let $X$ be a smooth projective variety with a marking $\phi$ that satisfies
		$(\star_{\mathrm{Mult}})$. Then $X$ has a self-dual multiplicative
		Chow--K\"unneth decomposition, consisting of distinguished cycles in
		$\CH^*(X\times X)$, with the property that
		$$\operatorname{DCH}_{\phi}^*(X) \subseteq \CH^*(X)_{(0)}$$ 
		(and equality holds
		for $*=0,1,\dim X-1, \dim X$).\qed
	\end{proposition}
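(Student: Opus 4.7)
The plan is to transport a distinguished, self-dual Chow--Künneth decomposition of the ambient abelian varieties through the marking $\phi$, the guiding principle being O'Sullivan's Theorem~\ref{thm:SD}: any cycle that is both distinguished and numerically trivial must vanish. Accordingly, every verification (mutual orthogonality, idempotency, self-duality, multiplicativity) reduces to a cohomological computation.

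First, each abelian variety $A_i$ carries the Deninger--Murre (Fourier-theoretic) self-dual Chow--Künneth decomposition $\Delta_{A_i}=\sum_k \pi^k_{A_i}$ whose projectors are symmetrically distinguished. Tensoring and Tate-twisting yields a self-dual, distinguished Chow--Künneth decomposition $\{\pi^k\}$ on the ambient motive $N := \bigoplus_i \h(A_i)(n_i)$ of which $M$ is a direct summand. The idempotent $P\in \mathrm{End}(N)$ cutting out $M$ is distinguished by definition of a marking; hence so is the commutator $[P,\pi^k]$. Any morphism of motives preserves cohomological degree, so $[P,\pi^k]$ is cohomologically --- and hence numerically --- zero. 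Theorem~\ref{thm:SD} then forces $[P,\pi^k]=0$ on the nose, so $\pi^k_M := P\circ\pi^k = \pi^k\circ P$ is an idempotent and $\{\pi^k_M\}$ is a self-dual Chow--Künneth decomposition of $M$ by distinguished cycles. Conjugating by $\phi$ produces a Chow--Künneth decomposition $\{\pi^k_X\}$ of $X$ with each $\pi^k_X\in \CH^n(X\times X)$ distinguished for the product marking $\phi\otimes\phi$.

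The essential step is multiplicativity. For any $i,j,k$, consider
\[
 \gamma^{i,j,k} := \pi^k_X\circ \delta_X\circ (\pi^i_X\otimes \pi^j_X) \ \in\ \CH^{2n}(X^3).
\]
The small diagonal $\delta_X$ is distinguished by the assumption $(\star_{\mathrm{Mult}})$, the projectors are distinguished by construction, and composition and exterior product of distinguished cycles remain distinguished (they are pulled back and pushed forward along the distinguished projections between cartesian powers of $X$). Hence $\gamma^{i,j,k}$ is distinguished. On cohomology, $\gamma^{i,j,k}$ realizes the restriction of the cup product $H^i(X)\otimes H^j(X)\to H^*(X)$ composed with projection to $H^k(X)$, which vanishes when $i+j\neq k$. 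Applying Theorem~\ref{thm:SD} once more gives $\gamma^{i,j,k}=0$ for $i+j\neq k$, which is precisely the multiplicativity of $\{\pi^k_X\}$.

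For the inclusion $\operatorname{DCH}^*_\phi(X)\subseteq \CH^*(X)_{(0)}$, any class in $\operatorname{DCH}^*_\phi(X)$ corresponds via $\phi_*$ to a combination of symmetrically distinguished classes on the $A_i$, which all lie in the weight-zero part of the Deninger--Murre grading (symmetric divisors generate and sit in $\CH^1(A_i)_{(0)}$; stability under pull-back and push-forward along homomorphisms respects this grading). Transport via $\phi$ then places the class in $\CH^*(X)_{(0)}$. Equality in codimensions $*=0,1,n-1,n$ follows from the standard fact that, modulo numerical equivalence, $\CH^*(X)_{(0)}$ in those codimensions is generated by divisors and $1$-cycles, which are automatically distinguished. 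The step most likely to require delicate work is securing self-duality in the transfer from $N$ to $M$: \emph{a priori} the idempotent $P$ need not be compatible with transposition, and one may have to replace it by a symmetrized variant (which remains distinguished) before running the commutator argument of Step~2.
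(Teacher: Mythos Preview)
The paper itself gives no proof here --- it cites \cite[Proposition~6.1]{FV} and closes with \qed. Your argument is the natural one and is essentially what that reference does: every identity to be verified (orthogonality, idempotency, self-duality, multiplicativity, the inclusion $\operatorname{DCH}^*_\phi(X)\subseteq\CH^*(X)_{(0)}$) is an identity between distinguished cycles that already holds in cohomology, whence O'Sullivan's Theorem~\ref{thm:SD} forces it to hold in the Chow ring.

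Two small comments. Your closing worry about self-duality is unfounded and is dissolved by the very mechanism you have already set up. Transposition on $\CH^n(X\times X)$ corresponds, under the product marking $\phi^{\otimes 2}$, to the swap of factors on $N\otimes N$, which is componentwise a homomorphism of abelian varieties and therefore preserves symmetrically distinguished cycles. Hence ${}^t\pi^k_X$ is again distinguished, and since ${}^t\pi^k_X-\pi^{2n-k}_X$ is numerically trivial (Poincar\'e duality in cohomology), Theorem~\ref{thm:SD} kills it. No symmetrization of $P$ is needed.

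Second, your argument for the inclusion $\operatorname{DCH}^*_\phi(X)\subseteq\CH^*(X)_{(0)}$ via the Beauville grading on the $A_i$ is correct but roundabout; it is cleaner to stay on $X$. For $\alpha\in\operatorname{DCH}^i_\phi(X)$ and $k\neq 2i$, the cycle $(\pi^k_X)_*\alpha$ is distinguished (distinguished correspondences preserve distinguished cycles) and numerically trivial (its cohomology class sits in $H^{2i}$, which $\pi^k_X$ annihilates), hence zero. Your sketch of the equality cases $*\in\{0,1,n-1,n\}$ is too brief: what is actually used is that for any smooth projective $X$ the motives $\h^0,\h^1,\h^{2n-1},\h^{2n}$ are governed by the Picard and Albanese varieties, so that $\CH^*(X)_{(0)}\to\overline{\CH}^*(X)$ is injective in those degrees; combined with the section property of $\operatorname{DCH}^*_\phi(X)$, equality follows.
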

	
	Thus if we have  a smooth projective variety $X$ with a marking $\phi$ that
	satisfies
	$(\star_{\mathrm{Mult}})$, we have a chain of homomorphisms
	$$\operatorname{DCH}_{\phi}^*(X) \hookrightarrow \CH^*(X)_{(0)} \hookrightarrow
	\CH^*(X) \twoheadrightarrow \overline{\CH}^*(X),$$
	whose composition is an isomorphism, and where the left inclusion arrow is
	conjecturally an isomorphism (by Murre's conjecture D).

	\subsection{A crucial proposition}
	
	The following proposition is crucial to the proof of the second part of Theorem
	\ref{t:bv}.
	
	\begin{proposition}\label{prop:crucial}
		Let $X$ be a smooth projective variety of dimension $n\geq 2$ that admits a
		marking
		satisfying the condition $(\star)$ of Definition \ref{def:Star}. Assume
		that
		the cohomology of $X$ is spanned by algebraic classes in degree $\neq n$. Then
		the graded subalgebra $R^*(X) \subseteq \CH^*(X)$ generated by divisors, Chern
		classes and by cycles that are the intersection of two cycles in $X$ of
		positive
		codimension  injects into $\overline{\CH}^*(X)$.
	\end{proposition}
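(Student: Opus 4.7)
The plan is to show that $R^*(X)$ is contained in the subalgebra $\operatorname{DCH}_\phi^*(X)$ of distinguished cycles furnished by Propositions~\ref{prop:Disting} and~\ref{prop:multmarking}\,; since that subalgebra projects isomorphically onto $\overline{\CH}^*(X)$, the desired injectivity follows. The marking $\phi$ simultaneously equips $X$ with a multiplicative self-dual Chow--K\"unneth decomposition $\{\pi_X^i\}$ that induces a bigrading $\CH^*(X)=\bigoplus_{i,s}\CH^i(X)_{(s)}$ compatible with intersection product, and satisfying $\operatorname{DCH}_\phi^*(X)\subseteq \CH^*(X)_{(0)}$ with equality for $*\in\{0,1,n-1,n\}$.

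The key motivic input is that for every $k\neq n$, the Chow motive $\h^k(X)=(X,\pi_X^k)$ is isomorphic to a finite direct sum of Lefschetz motives\,: via the marking, $\h^k(X)$ is of abelian type, hence Kimura-finite and satisfying $\mathrm{hom}=\mathrm{num}$, and the hypothesis that $H^k(X)$ is algebraic says that the homological realization of $\h^k(X)$ splits as a sum of Tate twists, so Kimura-finiteness lifts this splitting to the category of Chow motives. From this Lefschetz structure two vanishings follow\,: firstly, $\CH^j(X)_{(s)}=0$ unless $s=0$ or $s=2j-n\geq 0$\,; secondly, $\CH^j(X)_{(0)}\cap \CH^j_{num}(X)=0$ whenever $2j\neq n$. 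Combining the second with the containment $\operatorname{DCH}_\phi^j(X)\subseteq \CH^j(X)_{(0)}$ yields $\operatorname{DCH}_\phi^j(X)=\CH^j(X)_{(0)}$ for all $j$ with $2j\neq n$\,; together with the first this refines to $\CH^j(X)=\operatorname{DCH}_\phi^j(X)$ whenever $2j<n$.

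With these tools, I would check the three classes of generators of $R^*(X)$. Chern classes are distinguished by~$(\star_{\mathrm{Chern}})$. For divisors, since the hypothesis forces $H^1(X,\QQ)=0$ (algebraic classes live only in even degree), we have $\Pic^0(X)_\QQ=\CH^1(X)_{(1)}=0$, whence $\CH^1(X)=\CH^1(X)_{(0)}=\operatorname{DCH}_\phi^1(X)$. For an intersection $\beta\cdot\gamma$ with $\beta\in \CH^j(X)$, $\gamma\in\CH^{k-j}(X)$ and $0<j<k\leq n$, I decompose $\beta=\beta_{(0)}+\beta_{(2j-n)}$ and $\gamma=\gamma_{(0)}+\gamma_{(2(k-j)-n)}$ according to the first vanishing, with summands of non-positive index being zero. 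By bigraded multiplicativity, the cross product $\beta_{(0)}\cdot\gamma_{(2(k-j)-n)}$ (which exists only when $2(k-j)>n$) lies in $\CH^k(X)_{(2(k-j)-n)}$\,; since $2(k-j)-n>0$ and $2(k-j)-n\neq 2k-n$ (as $j>0$), the first vanishing gives that this group is zero. Symmetrically $\beta_{(2j-n)}\cdot\gamma_{(0)}=0$, and the product $\beta_{(2j-n)}\cdot\gamma_{(2(k-j)-n)}$ does not arise, since $2j>n$ and $2(k-j)>n$ simultaneously would force $k>n$. Hence $\beta\cdot\gamma=\beta_{(0)}\cdot\gamma_{(0)}\in\CH^k(X)_{(0)}$. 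If $2k\neq n$, the second vanishing identifies $\CH^k(X)_{(0)}$ with $\operatorname{DCH}_\phi^k(X)$\,; if $2k=n$, then $k=n/2$ forces $2j<n$ and $2(k-j)<n$, so $\beta$ and $\gamma$ are already distinguished, and so is their product.

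The principal obstacle will be the lifting assertion that the cohomological splitting of $\h^k(X)$, $k\neq n$, promotes to a Chow-level decomposition as a sum of Lefschetz motives\,; this is where the abelian-type-motive hypothesis built into the marking is used in earnest, through Kimura finite-dimensionality and the agreement of numerical and homological equivalence. Once that step is in place, the rest of the argument is a finite combinatorial bookkeeping inside the bigraded Chow ring.
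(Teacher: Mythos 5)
Your overall route is the same as the paper's: lift the algebraic K\"unneth decomposition to a Chow-motivic one $\h(X)\simeq \h^n(X)\oplus\bigoplus\mathds{1}(*)$ by Kimura finite-dimensionality, invoke Propositions \ref{prop:Disting} and \ref{prop:multmarking}, deduce $\operatorname{DCH}^j_\phi(X)=\CH^j(X)_{(0)}$ for $2j\neq n$ from the Lefschetz pieces, and conclude by bigraded bookkeeping that the generators of $R^*(X)$ are distinguished. The one genuine flaw is your ``first vanishing'': the assertion that $\CH^j(X)_{(s)}=0$ unless $s=0$ or $s=2j-n\geq 0$ does \emph{not} follow from the Lefschetz structure. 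What does follow is only that $\CH^j(X)_{(s)}=0$ for $s\notin\{0,2j-n\}$ (the projectors $\pi^{2m}_X$ with $m\neq j$ kill $\CH^j(X)$, and the odd projectors other than $\pi^n_X$ vanish). The additional claim that the piece $\CH^j(X)_{(2j-n)}=(\pi^n_X)_*\CH^j(X)$ vanishes when $2j<n$ is Murre's conjecture B for this decomposition (Beauville's vanishing conjecture in the abelian-type setting); it is open in general, and the paper never uses it. You rely on it in three places: to write $\beta=\beta_{(0)}$ when $2j<n$, to exclude the term $\beta_{(2j-n)}\cdot\gamma_{(2(k-j)-n)}$, and --- most seriously --- to get $\CH^j(X)=\operatorname{DCH}^j_\phi(X)$ for $2j<n$, which is what your treatment of the middle case $2k=n$ rests on.

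Fortunately the gap is removable with tools you already have, which is essentially how the paper argues. Keep both summands $\beta=\beta_{(0)}+\beta_{(2j-n)}$ and $\gamma=\gamma_{(0)}+\gamma_{(2(k-j)-n)}$ with no sign restriction. The mixed terms land in $\CH^{k}(X)_{(2j-n)}$, resp.\ $\CH^{k}(X)_{(2(k-j)-n)}$, which vanish unless the index equals $0$ or $2k-n$; since $j,k-j>0$ this forces $2j=n$, resp.\ $2(k-j)=n$, in which case the term already lies in $\CH^k(X)_{(0)}$. The remaining term $\beta_{(2j-n)}\cdot\gamma_{(2(k-j)-n)}$ lands in $\CH^k(X)_{(2k-2n)}$, which vanishes for $k<n$ and equals $\CH^n(X)_{(0)}=\operatorname{DCH}^n_\phi(X)$ for $k=n$ (equality at $*=\dim X$ in Proposition \ref{prop:multmarking}). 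So in all cases $\beta\cdot\gamma\in\CH^k(X)_{(0)}$, and when $k=n/2$ one has moreover $\beta\cdot\gamma=\beta_{(0)}\cdot\gamma_{(0)}$ with $\beta_{(0)},\gamma_{(0)}$ distinguished because $j,k-j\neq n/2$; hence the product is distinguished in every case without ever invoking the conjectural vanishing. The same comment applies to divisors: for $n\geq 3$ the vanishing of $(\pi^n_X)_*\CH^1(X)$ should be justified by $\Pic^0(X)_\QQ=0$ (a consequence of $H^1(X)=0$) rather than by a negative-degree vanishing, and for $n=2$ one uses the equality $\operatorname{DCH}^1_\phi(X)=\CH^1(X)_{(0)}$ at $*=1$ from Proposition \ref{prop:multmarking}. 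With these adjustments your proof coincides with the paper's; your explicit handling of the case $k=n/2$ is in fact a point the paper passes over rather quickly.
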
      
	\begin{proof} Fix a marking $\phi : \h(X) \to M$ that satisfies $(\star)$\,; in
		particular, $X$ has motive of abelian type. 
		First we exploit the condition on the cohomology of $X$. This condition means
		that $H^{2i+1}(X) = 0$ for $2i+1\neq n$, and that the cycle class map
		$\CH^i(X)
		\to \mathrm{H}^{2i}(X)$ is surjective for $2i \neq n$. By using the
		nondegenerate cup-product pairing between $\mathrm{H}^{2i}(X)$ and
		$\mathrm{H}^{2n-2i}(X)$, we obtain a K\"unneth decomposition of the diagonal
		$\Delta_X = p^n_X + \sum_{2i\neq n} p^{2i}_X \in \mathrm{H}^{2n}(X\times X)$,
		where the classes $p^j_X$ are algebraic and such that the homological motives
		$(X,p^{2i}_X)$ are isomorphic to a direct sum of copies of the Lefschetz
		motive
		$\mathds{1}(-i)$ for $2i\neq n$, and where $(p_X^n)_*\mathrm{H}^*(X) = 
		\mathrm{H}^n(X)$. By finite-dimensionality of the motive of $X$, this
		decomposition lifts to a decomposition of the Chow motive of $X$\,:
		\begin{equation}\label{eq:motive}
		\h(X) \simeq \h^n(X) \oplus \bigoplus \mathds{1}(*),
		\end{equation}
		where $\mathrm{H}^*(\h^n(X)) = \mathrm{H}^n(X)$.
		
		By Proposition \ref{prop:Disting}, it is enough to show that $R^*(X) \subseteq
		\operatorname{DCH}^*_\phi(X)$. Since we already know that
		$\operatorname{DCH}^*_\phi(X)$ is a subalgebra of $\CH^*(X)$ that contains the
		Chern classes of $X$, it is enough to show that $\operatorname{DCH}^1_\phi(X)
		=
		\CH^1(X)$ and that the intersection of any two cycles of positive codimension
		belongs to $\operatorname{DCH}^*_\phi(X)$.  That $\operatorname{DCH}^1_\phi(X)
		=
		\CH^1(X)$ is clear from the description of the motive of $X$ given in
		\eqref{eq:motive}. By Proposition \ref{prop:multmarking}, $X$ has a
		multiplicative self-dual Chow--K\"unneth decomposition $\{\pi^i_X : 0\leq i
		\leq
		2n\}$ that induces a bigrading on the Chow ring of $X$ with the property that
		$\operatorname{DCH}_{\phi}^*(X) \subseteq \CH^*(X)_{(0)}$. By
		finite-dimensionality of $X$, any two Chow--K\"unneth decompositions of the
		motive of $X$ are isomorphic\,; therefore, by \eqref{eq:motive}, the bigrading
		on $\CH^*(X)$ has the form\,:
		\begin{equation}\label{eq:multX}
		\CH^i(X) = \CH^i(X)_{(0)} \oplus \CH^i(X)_{(2i-n)}.
		\end{equation}
		By multiplicativity we have $\operatorname{Im} \left( \CH^i(X)_{(s)} \otimes
		\CH^j(X)_{(t)} \to \CH^{i+j}(X)\right) \subseteq \CH^{i+j}(X)_{(s+t)}$.
		\linebreak
		Combining the multiplicativity with \eqref{eq:multX},
		we find that, for $0<i,j<n$, we have
		\begin{small}
			$$\operatorname{Im} \left( \CH^i(X) \otimes \CH^j(X) \to \CH^{i+j}(X)\right)
			=
			\operatorname{Im} \left( \CH^i(X)_{(0)} \otimes \CH^j(X)_{0} \to
			\CH^{i+j}(X)\right)  \subseteq \CH^{i+j}(X)_{(0)}.$$ 
		\end{small}
		\noindent Now the motive $(X,\pi_X^{2i})$ is isomorphic to a direct sum of
		Lefschetz motives $\mathds{1}(-i)$ for all  $i\neq \frac{n}{2}$, so that
		$\CH^i(X)_{(0)}$ maps isomorphically to $\overline{\CH}^i(X)$ for all  $i\neq
		\frac{n}{2}$. Since $\operatorname{DCH}_\phi^i(X)$ maps isomorphically to
		$\overline{\CH}^i(X)$ for all  $i$,  we see that the inclusion
		$\operatorname{DCH}_\phi^i(X) \subseteq \CH^i(X)_{(0)}$ is an equality for all
		$i\neq \frac{n}{2}$. This establishes that the intersection of any two cycles
		of
		positive codimension belongs to $\operatorname{DCH}^*_\phi(X)$, and thereby
		finishes the proof of Proposition \ref{prop:crucial}.
	\end{proof}

	\begin{remark}\label{rmk:reduction}
		In view of Proposition~\ref{prop:Disting} (together with the fact that the
		property $(\star)$ is stable under product by \cite[Prop.~4.1]{FV}),
		Propositions~\ref{prop:multmarking} and
		\ref{prop:crucial}, the proof of the main Theorems \ref{t:bv} and \ref{t:mck}
		reduces to showing that the relevant varieties $X$ satisfy the
		following two
		properties\,:
		\begin{enumerate}
			\item The cohomology of $X$ is spanned by algebraic classes in degree $\neq
			n$\,;
			\item $X$ admits a marking that satisfies the condition $(\star)$.
		\end{enumerate}
	\end{remark}

	\section{The motive of $\bar{X}$}

	\subsection{Hyperelliptic curves} Let $C$ be a smooth projective hyperelliptic
	curve of genus $g\geq 0$, that is, $C$ comes equipped with a $2$-to-$1$
	morphism
	$\pi: C\to \mathbb{P}^1$. This morphism induces an involution on $C$ which we
	call the hyperelliptic involution. By definition, the \emph{Weierstra\ss\
		points} of $C$ are the $2g+2$ ramification points of the morphism $\pi:C\to
	\mathbb{P}^1$, that is, the $2g+2$ fixed points of the involution. An elliptic
	curve will be seen as a hyperelliptic curve via its $[-1]$-involution. We have
	the following basic lemma.
	
	\begin{lemma}\label{lem:fixedpointshyperelliptic}
		The fixed points for the hyperelliptic involution are pairwise rationally
		equivalent, \emph{i.e.}, define the same class in $\CH_0(C)_\QQ$.
	\end{lemma}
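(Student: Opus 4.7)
The plan is to use the hyperelliptic structure map $\pi : C \to \mathbb{P}^1$ directly. For any two points $t_1, t_2 \in \mathbb{P}^1$, the divisors $[t_1]$ and $[t_2]$ are linearly equivalent on $\mathbb{P}^1$ (since $\Pic(\mathbb{P}^1) = \ZZ$), and flat pull-back preserves rational equivalence, so $\pi^*[t_1]$ and $\pi^*[t_2]$ define the same class in $\CH_0(C)$.

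Now I would invoke the characterization of Weierstra\ss\ points as the ramification points of $\pi$. If $p \in C$ is fixed by the hyperelliptic involution, then scheme-theoretically $\pi^{-1}(\pi(p)) = 2p$ as a divisor on $C$, since $\pi$ is a degree-$2$ map totally ramified at $p$. Consequently, for any two Weierstra\ss\ points $p, q$, I obtain
\[
2[p] = \pi^*[\pi(p)] = \pi^*[\pi(q)] = 2[q] \quad \text{in } \CH_0(C).
\]
Dividing by $2$ in $\CH_0(C)_{\QQ}$ then yields $[p] = [q]$, as required. The elliptic case (where the involution is $[-1]$ and the fixed points are the $2$-torsion points) is subsumed: two $2$-torsion points differ by a $2$-torsion element of $\Pic^0(E)$, which vanishes after tensoring with~$\QQ$.

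No step here looks to be a genuine obstacle\,; the only subtlety is the scheme-theoretic identification $\pi^{-1}(\pi(p)) = 2p$ at a ramification point, which is immediate from the definition of the ramification divisor of a degree-$2$ map and is why the rational-coefficient hypothesis cannot in general be dropped.
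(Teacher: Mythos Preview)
Your proof is correct and follows essentially the same approach as the paper: both use that $\pi$ is flat of degree $2$ to deduce $2[P] = 2[Q]$ in $\CH_0(C)$ for any two Weierstra\ss\ points $P, Q$, and then pass to rational coefficients. You have simply spelled out the pull-back argument in more detail than the paper's one-line version.
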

	\begin{proof}
		Since $\pi$ is flat of degree $2$, we see that any two Weierstra\ss\ points
		$P$
		and $Q$ of $C$ satisfy $2[P] = 2[Q] \in \CH_0(C)$. Therefore the Weierstra\ss\
		points on $C$  define the same class in $\CH_0(C)_\QQ$.
	\end{proof}
	
	\subsection{The hyperelliptic curves $y^2 = x^{2g+1} + D$} \label{sec:Cg}
	Let $g$ be a natural number and let $D$ be a non-zero rational number, and 
	let $C_{g,D}$ be the smooth projective model of the affine curve $Y = \{y^2 =
	x^{2g+1} + D\}$. When $g>1$, the projective closure $X$ of $Y$ has a cusp at
	the
	point $\infty$, and the hyperelliptic curve $C_{g,D}$ is its normalization. In
	particular $C_{g,D}$ is obtained from $Y$ by adding one additional point at
	$\infty$.
	
	The curve $C_{g,D}$ is endowed with the hyperelliptic involution $\sigma$ which
	on the open subset $Y$ is given by $(x,y) \mapsto (x,-y)$. The fixed points for
	that
	action are called the Weierstra\ss\ points, and are explicitly given by the
	$2g+2$ points $\{(\zeta\cdot |D|^{\frac{1}{2g+1}}, 0) : \zeta \in \mu_{2g+1}\}
	\cup \{\infty\}$.
	
	The curve is also endowed with an action of $\mu_{2g+1}$, which on the open
	subset $Y$
	is given by $\zeta \cdot (x,y) = (\zeta \cdot x,y)$. Its fixed points are the
	points $(0,\sqrt{D})$, $(0,-\sqrt{D})$, and $\infty$. Note that these points
	are
	defined over $\QQ$ if $D=1$.
	
	\begin{lemma}\label{lem:fixedpoints}
		The fixed points for the hyperelliptic involution and for the
		$\mu_{2g+1}$-action are pairwise rationally equivalent, \emph{i.e.}, define
		the same
		class in $\CH_0(C_{g,D})_\QQ$.
	\end{lemma}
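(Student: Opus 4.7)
The plan is to reduce the statement to two additional rational equivalences, and then to establish these via an explicit principal divisor computation on $C_{g,D}$. By Lemma~\ref{lem:fixedpointshyperelliptic}, the $2g+2$ Weierstra\ss\ points are already pairwise rationally equivalent in $\CH_0(C_{g,D})_\QQ$. Since $\infty$ is simultaneously a Weierstra\ss\ point and a fixed point of the $\mu_{2g+1}$-action, it suffices to show that $[(0, \sqrt{D})] = [\infty] = [(0, -\sqrt{D})]$ in $\CH_0(C_{g,D})_\QQ$.

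For the first equality, I would consider the rational function $f = y - \sqrt{D}$ on $C_{g,D}$ (after extending scalars to contain $\sqrt{D}$ if needed; this is harmless since rational equivalence with $\QQ$-coefficients descends from the algebraic closure). The factorization $(y - \sqrt{D})(y + \sqrt{D}) = x^{2g+1}$ shows that on the affine chart, $f$ vanishes only at $(0, \sqrt{D})$. Because $D \neq 0$, the point $(0, \sqrt{D})$ is \emph{not} a Weierstra\ss\ point, so $x$ is a local parameter there; since $y + \sqrt{D}$ is a unit at that point, one reads off $\operatorname{ord}_{(0,\sqrt{D})}(f) = 2g+1$. At the point $\infty$, the hyperelliptic double cover $x \colon C_{g,D} \to \PP^1$ is ramified (with $\infty$ the unique preimage of $\infty \in \PP^1$), so $\operatorname{ord}_\infty(x) = -2$; the defining relation $y^2 = x^{2g+1} + D$ then forces $\operatorname{ord}_\infty(y) = -(2g+1)$, hence $\operatorname{ord}_\infty(f) = -(2g+1)$ as well. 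Collecting these orders yields $\operatorname{div}(f) = (2g+1)\bigl([(0,\sqrt{D})] - [\infty]\bigr)$, and dividing by $2g+1$ in $\CH_0(C_{g,D})_\QQ$ gives $[(0,\sqrt{D})] = [\infty]$. The symmetric argument with $y + \sqrt{D}$ gives $[(0,-\sqrt{D})] = [\infty]$.

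The only subtlety is the order computation at $\infty$\,: once one checks that $\infty$ is a ramification point of the hyperelliptic double cover (so that $x$ has a pole of order exactly $2$ there), everything else is immediate from $y^2 = x^{2g+1} + D$. It is worth noting that the purely $\sqrt{D}$-free identity $\operatorname{div}(x) = [(0,\sqrt{D})] + [(0,-\sqrt{D})] - 2[\infty]$ alone only produces $[(0,\sqrt{D})] + [(0,-\sqrt{D})] = 2[\infty]$, and the $\sigma$-anti-invariant part of $\CH_0(C_{g,D})_\QQ$ need not vanish for $g \geq 1$, so the function $y \mp \sqrt{D}$ (equivalently, an auxiliary extension of scalars by $\sqrt{D}$) really is needed to separate $(0, \sqrt{D})$ from $(0, -\sqrt{D})$.
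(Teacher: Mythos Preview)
Your proof is correct and follows essentially the same approach as the paper: both arguments reduce to Lemma~\ref{lem:fixedpointshyperelliptic} for the Weierstra\ss\ points and then use the rational functions $y\mp\sqrt{D}$ (the paper phrases this as intersecting the affine curve with the lines $\{y=\sqrt D\}$, $\{y=-\sqrt D\}$, $\{x=0\}$ and invoking excision to pick up the contribution at~$\infty$, whereas you compute the full divisor of $y-\sqrt{D}$ on $C_{g,D}$ directly). The two presentations are equivalent, yours being a bit more explicit about the pole order at~$\infty$.
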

	\begin{proof}
		By Lemma \ref{lem:fixedpointshyperelliptic}, the  fixed points for the
		hyperelliptic involution on $C_{g,D}$ (which include the $\infty$ point)
		define
		the same class in $\CH_0(C_{g,D})_\QQ$.
		
		Consider the lines $\{y=\sqrt D\}, \{y=-\sqrt D\}$ and $\{x=0\}$ in
		$\mathbb{A}^2$. These lines intersect the curve $Y$ in $(2g+1)[(0,\sqrt D)]$,
		$(2g+1)[(0,-\sqrt D)]$ and $[(0,\sqrt D)] + [(0,-\sqrt D)]$, respectively. We
		deduce that these 3 cycles are rationally equivalent on $Y$. By excision, we
		see
		that the points $(0,\sqrt D)$, $(0,-\sqrt D)$ and $\infty$ define the same
		class
		in $\CH_0(C_{g,D})_\QQ$. (Note that in the case $g=1$, \emph{i.e.}, in the
		case where
		$(C_{g,D},\infty)$ is an elliptic curve, the points $(0,\sqrt D)$ and 
		$(0,-\sqrt D)$ are 3-torsion points.)
	\end{proof}

	\subsection{Key proposition}
	
	\begin{proposition}\label{prop:markinghyperelliptic}
		Let $C$ be a smooth projective curve equipped with the action of a finite
		group
		$H$. Assume that $(C,H)$ is either\,:
		\begin{enumerate}[(i)]
			\item a hyperelliptic curve equipped with its hyperelliptic involution\,;
			\item a hyperelliptic curve $C_{g,D}$ as in subsection \ref{sec:Cg} equipped
			with the action of $\mu_{2g+1}$.
		\end{enumerate}
		Then $C$ has a marking that satisfies the condition $(\star)$ and $(\star_H)$,
		with the additional property that if $P$ is a fixed point of $H$, then the
		embedding $P\hookrightarrow C$ is distinguished.
	\end{proposition}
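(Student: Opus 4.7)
The plan is to construct an explicit marking for $C$ using its Abel--Jacobi embedding into the Jacobian, and then to transport each of the required conditions to the Jacobian, where the structure of symmetrically distinguished cycles from Theorem~\ref{thm:SD} can be applied.

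First, I would fix a basepoint $P_0 \in C$ that is fixed by $H$: in case (i), any Weierstra\ss\ point; in case (ii), the point $\infty$, which is simultaneously a Weierstra\ss\ point and fixed by the $\mu_{2g+1}$-action. Let $\iota : C \to J := J(C)$ be the Abel--Jacobi embedding $P\mapsto [P-P_0]$, which is then $H$-equivariant (with $H$ acting on $J$ by the induced group automorphism $h_J$ for $h \in H$, well-defined because $h(P_0)=P_0$). The standard Chow--K\"unneth decomposition $\h(C) = \mathds{1} \oplus \h^1(C) \oplus \mathds{1}(-1)$ determined by $P_0$, combined with the isomorphism of summands $\iota_* : \h^1(C) \stackrel{\simeq}{\to} \h^1(J)$, yields an isomorphism
$$\phi : \h(C) \stackrel{\simeq}{\longrightarrow} M := \mathds{1} \oplus \h^1(J) \oplus \mathds{1}(-1),$$
where $M$ is realized as a summand of $\h(\mathrm{pt}) \oplus \h(J) \oplus \h(\mathrm{pt})(-1)$ cut out by the idempotent $\operatorname{diag}(1, \pi^1_J, 1)$. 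Since the Deninger--Murre projector $\pi^1_J$ is symmetrically distinguished---a classical fact underlying the marking constructions used in \cite{FV}---this defines a legitimate marking.

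Next, I would verify the conditions. By Lemma~\ref{lem:fixedpointshyperelliptic} (case (i)) or Lemma~\ref{lem:fixedpoints} (case (ii)), any $H$-fixed point $P$ satisfies $[P] = [P_0]$ in $\CH_0(C)_\QQ$, and $[P_0]$ is distinguished because it corresponds under $\phi$ to the identity on the $\mathds{1}$ summand\,; this handles the distinguished-embedding assertion for fixed points. For $(\star_{\mathrm{Chern}})$, the only nontrivial Chern class of $C$ is $c_1(T_C) = -K_C$, and the identity $K_C = (2g-2)[P]$ valid for any Weierstra\ss\ point $P$ on a hyperelliptic curve of genus $g$ shows that $c_1(T_C)$ is a rational multiple of $[P_0]$, hence distinguished.

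For $(\star_{\mathrm{Mult}})$, the diagonal $\Delta_C \in \CH^1(C\times C)$ represents $\mathrm{id}_{\h(C)}$, which under $\phi^{\otimes 2}$ corresponds to $\mathrm{id}_M$, whose three components $\mathrm{id}_\mathds{1}$, $\pi^1_J$, $\mathrm{id}_{\mathds{1}(-1)}$ are each symmetrically distinguished\,; hence $\Delta_C$ is distinguished on $C\times C$. A short transversality check yields the identity $\delta_C = \Delta_{12}\cdot\Delta_{23}$ in $\CH^2(C^3)$, where $\Delta_{ij}$ is the pullback of $\Delta_C$ via $p_{ij}:C^3\to C^2$\,; since distinguished cycles form a subring, we conclude $\delta_C \in \operatorname{DCH}^*_{\phi^{\otimes 3}}(C^3)$. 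For $(\star_H)$, the graph $\Gamma_h$ corresponds under $\phi^{\otimes 2}$ to $(\mathrm{id}_\mathds{1}, h_{J,*}|_{\h^1(J)}, \mathrm{id}_{\mathds{1}(-1)})$\,; the graph of $h_J$ on $J\times J$ is the preimage of $[0_J]$ under the group homomorphism $J\times J\to J$, $(a,b)\mapsto b-h_J(a)$, and is therefore symmetrically distinguished by Theorem~\ref{thm:SD}.

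The main technical hurdle is the symmetric distinguishedness of the projector $\pi^1_J$, which underlies both the legitimacy of the marking and the distinguishedness of $\Delta_C$. Once this input is in place, the remainder of the argument is routine book-keeping with Chow--K\"unneth projectors together with applications of Theorem~\ref{thm:SD} on the stability of $\operatorname{DCH}^*$ under pullback and pushforward along homomorphisms of abelian varieties.
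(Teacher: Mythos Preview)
Your overall strategy---use the Abel--Jacobi embedding based at an $H$-fixed Weierstra\ss\ point to define the marking, and invoke Lemmas~\ref{lem:fixedpointshyperelliptic} and~\ref{lem:fixedpoints} for the fixed-point assertion---matches the paper's. However, your verification of $(\star_{\mathrm{Mult}})$ is circular. You write $\delta_C = \Delta_{12}\cdot\Delta_{23}$ in $\CH^2(C^3)$ and then appeal to the claim that ``distinguished cycles form a subring''. But the subring property of $\operatorname{DCH}^*_{\phi^{\otimes 3}}(C^3)$ with respect to the intersection product on $C^3$ is the content of Proposition~\ref{prop:Disting}, which takes $(\star_{\mathrm{Mult}})$ as a \emph{hypothesis}; O'Sullivan's Theorem~\ref{thm:SD} gives the subring property only on abelian varieties, and the marking $\phi^{\otimes 3}$ is a linear isomorphism of Chow groups, not \emph{a priori} a ring isomorphism. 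Note also that your argument for $(\star_{\mathrm{Mult}})$ nowhere uses the hyperelliptic hypothesis---a red flag, since for a general curve the Abel--Jacobi marking is not known to satisfy $(\star)$. The paper avoids the issue entirely by citing \cite[Corollary~5.4]{FV}, where $(\star)$ for hyperelliptic curves is established via a direct (non-circular) computation exploiting the symmetry $[-1]^*[\iota(C)]=[\iota(C)]$ of the curve class in the Jacobian.

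For $(\star_H)$, your argument is correct and takes a genuinely different route. The paper first invokes Proposition~\ref{prop:multmarking} (which already requires $(\star_{\mathrm{Mult}})$) to obtain a multiplicative Chow--K\"unneth decomposition with $\operatorname{DCH}^1(C\times C) = \CH^1(C\times C)_{(0)}$, and then checks $\Gamma_h\in\CH^1(C\times C)_{(0)}$ by a short projector computation using that $h$ fixes $P$. You instead transport $\Gamma_h$ to $J\times J$ via the $H$-equivariant Abel--Jacobi map and observe that the graph of the induced group automorphism $h_J$ is the pullback of $[0_J]$ along a homomorphism $J\times J\to J$, hence symmetrically distinguished by Theorem~\ref{thm:SD}; composing with $\pi^1_J$ then finishes. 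Your route is more direct and independent of the bigrading machinery, while the paper's illustrates how Proposition~\ref{prop:multmarking} reduces such verifications to computations in the graded piece $\CH^*(-)_{(0)}$.
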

	\begin{proof}
		By \cite[Corollary~5.4]{FV}, the embedding of a hyperelliptic curve inside its
		Jacobian $\mathrm{AJ} : C\to J(C), x \mapsto O_C(x-q)$, where $q$ is a
		Weierstra\ss\ point, provides a marking for $C$ that satisfies $(\star)$.
		Moreover the embedding $q \hookrightarrow C$ is distinguished by construction.
		Since by Lemma \ref{lem:fixedpoints} all fixed points of $H$ and all
		Weierstra\ss\ points are rationally equivalent, we see that the embedding
		$P\hookrightarrow C$ is distinguished for any choice of fixed point $P$ of
		$H$.
		
		It remains to show that for any $h\in H$, the graph $\Gamma_h \in
		\CH^1(C\times
		C)$ is distinguished with respect to the product marking $\mathrm{AJ}\otimes
		\mathrm{AJ}$. Let $P$ be a fixed point of $H$ (which by Lemma
		\ref{lem:fixedpoints} is rationally equivalent to any Weierstra\ss\ point) and
		consider the following Chow--K\"unneth decomposition 
		$$\pi_C^0 = P \times C, \quad \pi_C^2 = C\times P, \quad \mbox{and }\ \pi_C^1
		=
		\Delta_C - \pi^0_C - \pi^2_C.$$
		These are distinguished cycles in $C\times C$ and by Proposition
		\ref{prop:multmarking} they define a multiplicative Chow--K\"unneth
		decomposition such that 	$\operatorname{DCH}^*(C\times C) \subseteq
		\CH^*(C\times C)_{(0)}$. Since in codimension $1$ the previous inclusion is an
		equality (see Proposition \ref{prop:multmarking}), we are reduced to showing
		that $\Gamma_h$ belongs to $\CH^1(C\times C)_{(0)}$ with respect to the
		product
		Chow--K\"unneth decomposition on the product $C\times C$. That is, we are
		reduced to showing that $$(\pi_C^0\otimes \pi_C^1 + \pi_C^1\otimes \pi_C^0)_*
		\Gamma_h = 0 \quad \mbox{in } \CH^1(C\times C).$$
		By orthogonality and symmetry, we are reduced to showing that 
		$$\pi_C^1\circ \Gamma_h \circ \pi_C^2 = 0 \quad \mbox{in } \CH^1(C\times C).$$
		But $\Gamma_h \circ \pi_C^2 = C\times h(P) = C\times P = \pi_C^2$ (since $P$
		is a fixed point of $H$). 
		We can then conclude by orthogonality of $\pi_C^1$ and
		$\pi_C^2$.
	\end{proof}

	\subsection{The motive of $\bar X$} 
	\label{sbar}     
	In this subsection we consider a projective variety of the form 
	$$\bar X = (C_1\times \cdots \times C_n)/G,$$
	where the $C_i$ are hyperelliptic curves and $G$ is a certain finite subgroup
	of
	automorphisms of $C_1\times \cdots \times C_n$. Specifically, we assume one of
	the following\,:
	\begin{enumerate}[(a)]
		\item Each $C_i$ is a hyperelliptic curve equipped with the action of $H\cong
		\ZZ_{2}$ induced by its hyperelliptic involution, and $$G = \{(h_1,\ldots,h_n)
		\in H^n : h_1 +\cdots+h_n = 0\}.$$
		\item Let $g$ be a positive integer and let $a,b\geq 0$ be integers such that
		$n=a+b$ and $a>b$. Each $C_i$ is a hyperelliptic curve of genus $g$ as in
		Subsection
		\ref{sec:Cg} equipped with the action of $H = \mu_{2g+1}$ given by $(x,y)
		\mapsto (\zeta\cdot x,y)$, and 
		$$G = G_{a,b} =  \{(h_1,\ldots,h_n) \in H^n : h_1 +\cdots+h_a - h_{a+1} -
		\cdots - h_{a+b} = 0\}.$$
	\end{enumerate}	
	Note that in case (a) if the curves are chosen to be elliptic curves endowed
	with the $[-1]$-involution, then $\bar{X}$ is the variety considered in Theorem
	\ref{ch}, while   in case (b) if the curves are chosen to be elliptic curves
	with an order 3 automorphism, and one takes $b=0$, then $\bar{X}$ is the
	variety
	considered in Theorem \ref{ch3}.	\medskip
	
	The goal of this subsection is to determine the motive of $\bar{X}$\,; this
	will be used
	later on in Section \ref{s:main}. We also show
	how the formalism of distinguished cycles (and multiplicative Chow--K\"unneth
	decomposition) works for
	$\bar{X}$. This is done for the reader's benefit, and is not necessary for the
	results in Section \ref{s:main}.
	\medskip

	In what follows, a hyperelliptic curve $C$ is always endowed with the
	Chow--K\"unneth decomposition given by 
	\[\pi_C^0 := P\times C, \quad \pi_C^2 = C\times P, \quad \pi^1_C = \Delta_C
	- \pi_C^0 - \pi_C^2,\]
	where $P$ is the class of a Weierstra\ss\ point.
	A product of hyperelliptic curves $C_1\times\cdots\times C_n$ is endowed with
	the product Chow--K\"unneth
	decomposition
	$$\pi^k :=\sum_{k=k_1+\cdots +k_n} \pi_{C_1}^{k_1}\otimes\cdots\otimes
	\pi_{C_n}^{k_n}.$$
	In the case where $C$ is an elliptic curve endowed with the $[-1]$-involution,
	note that the $0$ element is a Weierstra\ss\ point, so that the above
	Chow--K\"unneth decomposition is the Deninger--Murre decomposition \cite{DM}.
	By unicity of the Deninger--Murre decomposition \cite[Theorem
	3.1]{DM}, the above product Chow--K\"unneth decomposition for a product of
	elliptic curves is the one of Deninger--Murre
	\cite{DM}.
	\medskip

	Since the variety $\bar X$ is obtained as the quotient of the smooth projective
	variety $C_1\times \cdots \times C_n$ by a
	finite group $G$, and since we are only concerned with algebraic cycles with
	rational coefficients, the motive of $\bar X$ identifies with the $G$-invariant
	part of the motive of $C_1\times \cdots \times C_n$, as algebra objects. In
	particular, the notion of
	multiplicative Chow--K\"unneth decomposition and the condition
	$(\star_{\mathrm{Mult}})$ make sense for $\bar X$, so that Proposition
	\ref{prop:crucial} holds with the Chern classes omitted. These are established
	for $\bar X$ via the
	following proposition, which is the main result of this section\,:

	\begin{proposition}\label{prop:Xbar}
		Let $\bar{X}=(C_1\times\cdots\times C_n)/G$ with $C_1,\ldots,C_n$ and $G$ as
		in (a) or (b) above. Then the $\QQ$-subalgebra of $\CH^*(\bar X)$ generated by
		$\CH^1(\bar
		X)$ and by the images of the intersection products
		\[ \CH^{k-l}(\bar{X})\otimes \CH^l(\bar{X})\ \rightarrow\ \CH^{k}(\bar{X})\ \
		\
		(0< l<k) \]
		injects into $\overline\CH^{*}(\bar{X})$.
	\end{proposition}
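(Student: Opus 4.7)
The plan is to verify, for $\bar X$, the two conditions of Remark~\ref{rmk:reduction} adapted to the singular setting (omitting Chern classes, which are irrelevant since the statement does not involve them), and then to invoke the argument of Proposition~\ref{prop:crucial}. Specifically, I will produce (i) a marking of the motive $\h(\bar X)$ satisfying $(\star_{\mathrm{Mult}})$, and (ii) show that $H^k(\bar X)$ is spanned by algebraic classes for every $k \neq n$.

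For (i), I first endow $Y := C_1\times\cdots\times C_n$ with the tensor product marking coming from Proposition~\ref{prop:markinghyperelliptic}. By the product stability of $(\star)$ (\cite[Prop.~4.1]{FV}), this marking satisfies $(\star)$, and since each factor marking satisfies $(\star_{H_i})$ with $H_i = \ZZ_2$ in case (a) (respectively $H_i = \mu_{2g+1}$ in case (b)), the product marking satisfies $(\star_{H_1\times\cdots\times H_n})$, hence a fortiori $(\star_G)$, because $G \subseteq H_1\times\cdots\times H_n$. The projector $p := \frac{1}{|G|}\sum_{g\in G}\Gamma_g$ cutting out $\h(\bar X)$ from $\h(Y)$ then has distinguished entries and furnishes a marking of $\h(\bar X)$. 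To verify $(\star_{\mathrm{Mult}})$, I will identify $\CH^*(\bar X^3)$ with the $G^3$-invariants of $\CH^*(Y^3)$ and note that $\delta_{\bar X}$ corresponds to the symmetrization $\frac{1}{|G|^2}\sum_{(g_2,g_3)\in G^2}(\ide_Y,g_2,g_3)_*\delta_Y$; this is distinguished because $\delta_Y$ is so by $(\star_{\mathrm{Mult}})$ for $Y$, the graphs $\Gamma_{g_i}$ are so by $(\star_G)$, and distinguished cycles form a subalgebra stable under the operations in play.

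For (ii), I will use the K\"unneth decomposition $H^k(\bar X) = \bigoplus_{k_1+\cdots+k_n=k}(H^{k_1}(C_1)\otimes\cdots\otimes H^{k_n}(C_n))^G$ and analyse the characters summand by summand. The factor groups $H_i$ act trivially on $H^0(C_i)$ and $H^2(C_i)$, while on $H^1(C_i)$ they act with no trivial character: this is obvious in case (a), and in case (b) a direct computation on the basis $\{x^k\, dx/y : 0\leq k\leq g-1\}$ of $H^0(C_i,\Omega^1)$ shows that $\mu_{2g+1}$ acts with eigenvalues $\zeta,\zeta^2,\ldots,\zeta^g$, and on $H^{0,1}$ with the conjugate eigenvalues, together exhausting all nontrivial characters. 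Setting $I := \{i : k_i = 1\}$, the $G$-invariance of a K\"unneth summand with $I\neq\emptyset$ forces the associated character of $H_1\times\cdots\times H_n$ to be a scalar multiple of the defining character $(\epsilon_1,\ldots,\epsilon_n)$ of $G$; since each $\epsilon_i$ generates $\widehat{H_i}$, this compels $I = \{1,\ldots,n\}$, and hence $k = n$. Thus for $k\neq n$, only K\"unneth summands with all $k_i \in \{0,2\}$ contribute, yielding algebraic (Lefschetz) classes.

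With (i) and (ii) in hand, the proof of Proposition~\ref{prop:crucial}, which relies only on the marking and on the cohomology condition, applies verbatim to $\bar X$ (with all mention of Chern classes suppressed) and yields the claim. The main obstacle is the character-theoretic analysis in (ii) for case (b), requiring the explicit identification of the $\mu_{2g+1}$-representation on $H^1(C_{g,D})$ and a careful dualisation of the defining subgroup $G_{a,b} \subseteq \mu_{2g+1}^n$; step (i), by contrast, is essentially formal once Proposition~\ref{prop:markinghyperelliptic} and the stability results of \cite{FV} are available.
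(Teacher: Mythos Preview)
Your proposal is correct and follows essentially the same approach as the paper: the paper likewise reduces to Proposition~\ref{prop:crucial} (with Chern classes omitted) by combining Proposition~\ref{prop:markinghyperelliptic} with \cite[Remark~4.3 and Proposition~4.12]{FV} for the marking, and Corollary~\ref{C:CH} for the cohomology condition. The only cosmetic difference is that for step~(ii) you give a direct character-theoretic argument on cohomology, whereas the paper proves the stronger motivic decomposition of Corollary~\ref{C:CH} via Proposition~\ref{P:quotient} and Lemma~\ref{L:aut}; your argument is precisely the cohomological shadow of that motivic computation, and it suffices here since Proposition~\ref{prop:crucial} only requires the cohomology statement (the motivic lift being supplied internally by finite-dimensionality).
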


	In the next section, this statement, together with the existence of a
	multiplicative Chow--K\"unneth decomposition, will be extended from $\bar{X}$
	to
	the crepant resolution $X$ for Calabi--Yau varieties as in Theorems
	\ref{ch} and \ref{ch3}, and to the Schreieder resolution $X$ for varieties as
	in Theorem~\ref{schreieder}.
	\medskip

	Note that  Proposition \ref{prop:markinghyperelliptic} (together with
	\cite[Remark 4.3 and Proposition 4.12]{FV}) establishes
	the existence of a marking for $\bar X$ that satisfies
	$(\star_{\mathrm{Mult}})$.	
	Therefore the proof of Proposition \ref{prop:Xbar} reduces, thanks to
	Proposition \ref{prop:crucial} and Remark \ref{rmk:reduction}, to an explicit
	computation of
	the Chow motive of $\bar X$. This is achieved in Corollary \ref{C:CH}. These
	computations will also be used in Section~\ref{svois} in order to establish
	Theorem \ref{conjV}. First, we start with a general lemma and a general
	proposition.

	\begin{lemma}\label{L:aut} Let $C$ be a smooth projective curve endowed with
		the action of a finite group $H$ such that $C/H$ is rational. Then, choosing a
		degree-1 zero-cycle $\alpha$ on $C$ that is $H$-invariant (e.g. $\alpha =
		\frac{1}{|H|}\sum_{h\in H} h^* [P]$ for any choice of point $P\in C$), and
		denoting $\pi^0_C := \alpha \times C$, $\pi^2_C := C\times \alpha$ and
		$\pi_C^1:= \Delta_C - \pi^0_C - \pi^2_C$, we have 
		\begin{equation}\label{eq:1}
		\sum_{h\in H} \Gamma_h\circ \pi_{C}^{1}=    0\ \ \ \hbox{in}\
		\CH^1(C\times C),
		\end{equation}
		whereas
		\begin{equation}\label{eq:2}
		\Gamma_h\circ \pi_{C}^{j} = \pi_{C}^{j}\ \ \ \hbox{in}\ \CH^1(C\times
		C), \  \ \ \mbox{for} \ j = 0 \ \mbox{or}\ 2, \ \mbox{and for all} \ h\in H.
		\end{equation}
		In particular, if $E$ is an elliptic curve and $H$ is a  non-trivial subgroup
		of the group of automorphisms, then $\sum_{h\in H} \Gamma_h\circ \pi_E^1 = 0,$
		where $\pi_E^1$ is the Chow--K\"unneth projector of Deninger--Murre.
	\end{lemma}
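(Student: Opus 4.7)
The plan splits naturally into proving \eqref{eq:2} by direct computation, then deducing \eqref{eq:1} from the rationality hypothesis on $C/H$.

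For \eqref{eq:2}, I would compute the compositions explicitly via the standard formula $\Gamma_h \circ Z = (p_{13})_*(p_{12}^*Z \cdot p_{23}^*\Gamma_h)$ on $C\times C\times C$. Applied to $Z = \alpha\times C$ and $Z = C\times \alpha$, one obtains $\Gamma_h\circ (\alpha\times C) = \alpha \times h_*(C) = \alpha\times C$ and $\Gamma_h\circ (C\times \alpha) = C\times h_*\alpha = C\times \alpha$, where the last equality uses the $H$-invariance of $\alpha$. This gives \eqref{eq:2}.

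For \eqref{eq:1}, by linearity and \eqref{eq:2} one has $\sum_h \Gamma_h\circ \pi_C^1 = \sum_h \Gamma_h - |H|\pi_C^0 - |H|\pi_C^2$, so the problem reduces to the cycle identity
\[
\sum_{h\in H}\Gamma_h = |H|\,(\pi_C^0 + \pi_C^2) \quad \text{in } \CH^1(C\times C).
\]
To establish this, let $q\colon C \to C/H \cong \mathbb{P}^1$ be the quotient morphism (using the rationality hypothesis). Set-theoretically, $(q\times q)^{-1}(\Delta_{C/H}) = \bigcup_{h\in H} \Gamma_h$; any discrepancy between this and the cycle-theoretic pullback $(q\times q)^*[\Delta_{C/H}]$ is supported on the finite fixed locus in $C\times C$, hence has codimension~$2$ and so vanishes in $\CH^1$. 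Thus $(q\times q)^*[\Delta_{C/H}] = \sum_h \Gamma_h$ in $\CH^1(C\times C)$. On $\mathbb{P}^1\times \mathbb{P}^1$, the diagonal class equals $t\times \mathbb{P}^1 + \mathbb{P}^1 \times t$ for any closed point $t$; choosing $t = q_*\alpha$ (a degree-$1$ zero-cycle, hence rationally equivalent to a point) and using $q^*q_*\alpha = \sum_{h\in H} h^*\alpha = |H|\alpha$ by $H$-invariance, the pullback yields exactly $|H|(\alpha\times C + C\times \alpha) = |H|(\pi_C^0 + \pi_C^2)$.

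For the final assertion about an elliptic curve $E$ with a non-trivial finite $H\subseteq \aut(E)$, the origin is $H$-fixed and provides an $H$-invariant degree-$1$ zero-cycle, while $E/H \cong \mathbb{P}^1$ is a standard fact; the resulting projector $\pi_C^1$ then coincides with the Deninger--Murre projector $\pi_E^1$, so \eqref{eq:1} applies. The main point requiring care is verifying the cycle-theoretic equality $(q\times q)^*[\Delta_{C/H}] = \sum_h \Gamma_h$ in $\CH^1$, which works precisely because any ramification contribution lies in codimension~$2$ on the surface $C\times C$.
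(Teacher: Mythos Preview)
Your proof is correct and follows essentially the same route as the paper. The paper phrases the key identity as ${}^t\Gamma_p \circ \Gamma_p = \sum_{h\in H}\Gamma_h$ (a standard fact for Galois covers, requiring no ramification discussion) and then computes ${}^t\Gamma_p \circ \Delta_{C/H} \circ \Gamma_p = (p\times p)^*(\pi^0_{C/H}+\pi^2_{C/H}) = |H|(\pi^0_C+\pi^2_C)$, concluding via orthogonality of the $\pi_C^i$; your argument is the same computation written as a flat pullback, with the orthogonality step replaced by your expansion via \eqref{eq:2}.
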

	\begin{proof} That $\Gamma_h\circ\pi_C^0 = \pi_C^0$ and $\Gamma_h\circ\pi_C^2 =
		\pi_C^2$ for
		all $h\in H$ is clear.
		Let $p: C\to C/H$ be the projection morphism. On the one hand, we have
		$${}^t\Gamma_p \circ \Gamma_p = \sum_{h\in H} \Gamma_h.$$
		On the other hand, since $C/H$ is rational we have $\Delta_{C/H} = \pi_{C/H}^0
		+\pi_{C/H}^2$ with $\pi_{C/H}^0 = \beta \times C/H$ and $\pi_{C/H}^2 = C/H
		\times \beta$ for any choice of degree-1 zero cycle $\beta$ on $C/H$. We also
		have 
		\[ 	   {}^t\Gamma_p \circ \Gamma_p  = {}^t\Gamma_p \circ (\pi_{C/H}^0
		+\pi_{C/H}^2) \circ \Gamma_p 
		=   (p\times p)^\ast (\pi^0_{C/H} + \pi^2_{C/H}) = \vert H\vert (\pi^0_{C} +
		\pi^2_{C}).
		\]
		We conclude by orthogonality of the Chow--K\"unneth projectors $\pi_C^i$.
	\end{proof}

	\begin{proposition}\label{P:quotient}
		Let $C_1,\ldots, C_n$ be smooth projective curves endowed with the action of
		finite abelian group $H$ such that each $C_i/H$ is rational. For integers
		$a,b\geq 0$ such that $a+b = n$ and $a>b$, consider the group
		$$G = G_{a,b} =  \{  (h_1,\ldots,h_n)\ \in H^n\ : \ h_1+\cdots+h_a - h_{a+1} -
		\cdots - h_{a+b}=0_H \}$$ together with
		its natural action on the product $C_1\times\cdots \times C_n$ and with the
		induced quotient morphism $p : C_1\times \cdots \times C_n \to (C_1\times
		\cdots
		\times C_n)/G$.  Then we have the implication
		\[ 0 < |\{j : i_j =1 \} |<n  \quad \Rightarrow \quad \Gamma_p\circ
		(\pi_{C_1}^{i_1} \otimes \cdots \otimes \pi_{C_n}^{i_n}) \circ {}^t\Gamma_p 
		=0.
		\]
		In particular, the Chow motive of $(C_1\times \cdots \times C_n)/G$ decomposes
		into a direct sum of Lefschetz motives and one copy of the motive $T:=
		(C_1\times \cdots \times C_n, \frac{1}{|G|} \sum_{g\in G} \Gamma_g\circ 
		(\pi_{C_1}^{1} \otimes \cdots \otimes \pi_{C_n}^{1}))$. 
	\end{proposition}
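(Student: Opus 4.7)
The plan is to reduce the desired vanishing of $\Gamma_p\circ \pi \circ {}^t\Gamma_p$ to a statement purely on $X:=C_1\times\cdots\times C_n$, and then dispatch it via Fourier analysis on the finite abelian group $H$. Write $\pi:=\pi_{C_1}^{i_1}\otimes\cdots\otimes\pi_{C_n}^{i_n}$ and $S:=\{j:i_j=1\}$, so that the hypothesis reads $0<|S|<n$.

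For the reduction, the same argument as in Lemma~\ref{L:aut} gives ${}^t\Gamma_p\circ\Gamma_p=\sum_{g\in G}\Gamma_g$, and hence mutually inverse isomorphisms $\Gamma_p\colon (X,e_G)\to \h(\bar X)$ and $\tfrac{1}{|G|}{}^t\Gamma_p\colon \h(\bar X)\to (X,e_G)$, where $e_G:=\tfrac{1}{|G|}\sum_{g\in G}\Gamma_g$. Under this identification, $\Gamma_p\circ\pi\circ{}^t\Gamma_p$ corresponds, up to the non-zero scalar $|G|$, to $e_G\circ\pi\circ e_G$, so it is enough to prove the stronger identity
$$\sum_{g\in G}\Gamma_g\circ\pi = 0\quad \text{in }\CH^n(X\times X).$$

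For this identity I write $G=\ker(\varphi)$ where $\varphi\colon H^n\to H$ is the linear form $(h_1,\ldots,h_n)\mapsto \epsilon_1h_1+\cdots+\epsilon_nh_n$ with $\epsilon_j=+1$ for $j\leq a$ and $\epsilon_j=-1$ for $j>a$. Orthogonality of characters on $H$ yields
$$\sum_{g\in G}\Gamma_g\circ\pi=\frac{1}{|H|}\sum_{\chi\in\widehat H}\bigotimes_{j=1}^n\Bigl(\sum_{h\in H}\chi(h)^{\epsilon_j}\,\Gamma_h\circ\pi_{C_j}^{i_j}\Bigr),$$
and I annihilate each term in the $\chi$-sum by one of two complementary observations. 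If $j\in S^c$, Lemma~\ref{L:aut} gives $\Gamma_h\circ\pi_{C_j}^{i_j}=\pi_{C_j}^{i_j}$, so the $j$th factor equals $\bigl(\sum_{h\in H}\chi(h)^{\epsilon_j}\bigr)\,\pi_{C_j}^{i_j}$, which vanishes for every non-trivial $\chi$. If $j\in S$ and $\chi$ is trivial, the $j$th factor is $\sum_{h\in H}\Gamma_h\circ\pi_{C_j}^1=0$ by Lemma~\ref{L:aut} (applicable because $C_j/H$ is rational). Since $S$ and $S^c$ are both non-empty, every $\chi$ is killed by at least one factor, and the vanishing follows. The main obstacle is precisely orchestrating this character decomposition; the signs $\epsilon_j$ turn out to be harmless because $\chi^{\epsilon_j}$ is trivial exactly when $\chi$ is, so the dichotomy applies uniformly in cases~(a) and (b).

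For the concluding decomposition of $\h(\bar X)$, I take the points $P_j\in C_j$ defining $\pi_{C_j}^0=P_j\times C_j$ and $\pi_{C_j}^2=C_j\times P_j$ to be $H$-fixed points, which by Lemma~\ref{lem:fixedpoints} is compatible with the convention of Subsection~\ref{sbar}. Then each $\pi_{C_j}^0$ and $\pi_{C_j}^2$, and hence $\pi_{C_j}^1=\Delta_{C_j}-\pi_{C_j}^0-\pi_{C_j}^2$, commutes with every $\Gamma_h$. Decomposing $\Delta_X$ into its $3^n$ Künneth summands in $\h(\bar X)\simeq (X,e_G)$ and applying the identity just proved to each, I annihilate every summand except those with $|S|=0$, which collapse to Lefschetz motives $\mathds{1}(-\tfrac{1}{2}\sum_j i_j)$ (thanks to the $G$-invariance of each $\pi_{C_j}^0$ and $\pi_{C_j}^2$), and the single summand with $|S|=n$, which is exactly the motive $T$ defined in the statement.
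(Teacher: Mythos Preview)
Your proof is correct. After the same reduction to $\sum_{g\in G}\Gamma_g\circ\Pi=0$, the paper proceeds more directly than your character decomposition: since $S$ and $S^c$ are both non-empty, one relabels so that $1\in S$ and $n\in S^c$, parametrizes $G\cong H^{n-1}$ by $(h_1,\ldots,h_{n-1})$ with $h_n$ determined by the linear relation, drops the $h_n$-dependence via $\Gamma_{h_n}\circ\pi_{C_n}^{i_n}=\pi_{C_n}^{i_n}$ (equation~\eqref{eq:2} of Lemma~\ref{L:aut}), and then factors out $\sum_{h\in H}\Gamma_h\circ\pi_{C_1}^1=0$ (equation~\eqref{eq:1}). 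Your Fourier approach is a valid alternative that avoids singling out particular indices, at the price of implicitly working in Chow groups with cyclotomic coefficients so that the individual $\chi$-summands make sense; both routes rest on exactly the same two inputs from Lemma~\ref{L:aut}.

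One small imprecision: in your final paragraph you take $P_j$ to be $H$-fixed points and cite Lemma~\ref{lem:fixedpoints}, but the proposition is stated for arbitrary curves with $C_j/H$ rational, where fixed points need not exist. The correct input is Lemma~\ref{L:aut}, which only requires an $H$-invariant degree-$1$ zero-cycle $\alpha_j$; this already gives the commutation of $\pi_{C_j}^{0},\pi_{C_j}^{2}$ with every $\Gamma_h$, and your argument goes through unchanged.
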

	\begin{proof}
		Let us write $\Pi$ for $\pi_{C_1}^{i_1} \otimes \cdots \otimes
		\pi_{C_n}^{i_n}$. The action of $G$ commutes with $\Pi$, therefore
		$\frac{1}{|G|}\Gamma_p \circ \Pi \circ {}^t\Gamma_p$ is an idempotent, and it
		is
		zero if and only if $$\sum_{g\in G} \Gamma_g \circ \Pi = 0.$$
		Assume that $0 < |\{j : i_j =1 \} |<n$. By symmetry, we may assume without
		loss
		of generality that $\Pi = \pi_{C_1}^1\otimes \Pi' \otimes \pi_{C_n}$, where
		$\pi_{C_n} = \pi_{C_n}^0$ or $ \pi_{C_n}^2$, and $\Pi' = \pi_{C_2}^{i_2}
		\otimes
		\cdots \otimes \pi_{C_{n-1}}^{i_{n-1}}$. Then, partitioning $G$ by the first
		entry of its elements, we have
		\[\sum_{g\in G} \Gamma_g\circ \Pi = \sum_{g' := (h_2,\ldots, h_{n-1}) \in
			H^{n-2}}  
		\left( \sum_{h\in H} (\Gamma_h\circ \pi_{C_1}^1)\otimes (\Gamma_g'\circ \Pi')
		\otimes
		\pi_{C_n} \right)=   0.
		\]
		The first equality follows from \eqref{eq:1} and the second equality follows
		from \eqref{eq:2} of Lemma \ref{L:aut}.

		Now  assume $|\{j : i_j =1 \} | = 0$, \emph{i.e.}, $\Pi = \pi_{C_1}^{i_1}
		\otimes \cdots \otimes \pi_{C_n}^{i_n}$ with $\{i_1,\ldots, i_n\} \subseteq
		\{0,2\}$. In that case, we also have for all $g\in G$ that $\Gamma_g\circ \Pi
		= \Pi$,
		and thus $\sum_{g\in G} \Gamma_g \circ \Pi \neq 0$. Moreover the motive
		$((C_1\times
		\cdots \times C_n)/G, \frac{1}{G} \Gamma_p\circ \Pi \circ {}^t\Gamma_p)$ is
		isomorphic to the Lefschetz motive $\mathds{1}(-k)$, where 
		$k = |\{j : i_j = 2\}|$.
		
		Finally, when considering $|\{j : i_j =1 \} | = n$, one is left with the
		motive
		\[ ((C_1\times\cdots\times C_n)/G, \frac{1}{|G|}\Gamma_p \circ
		(\pi^1_{C_1}\times\cdots\times \pi^1_{C_n}) \circ {}^t\Gamma_p	)\ ,\]
		which (under $\Gamma_p$) is isomorphic to	
		\[ T:=
		(C_1\times \cdots \times C_n, \frac{1}{|G|} \sum_{g\in G} \Gamma_g\circ 
		(\pi_{C_1}^{1} \otimes \cdots \otimes \pi_{C_n}^{1}))\ .\]
	\end{proof}

	\begin{corollary}\label{C:CH}
		Let $\bar{X}=(C_1\times\cdots\times C_n)/G$ with $C_1,\ldots,C_n$ and $G$ as in
		(a) or (b) above.
		Then the Chow motive of $\bar{X}$
		decomposes into a
		direct sum of Lefschetz motives and one copy of the motive 
		\[ T:= (C_1\times
		\cdots \times C_n, \frac{1}{|G|} \sum_{g\in G} \Gamma_g\circ  (\pi_{C_1}^{1}
		\otimes \cdots \otimes \pi_{C_n}^{1}))\ \ \ \in \MM_{\rm rat}\ .\] 
		In case (a),
		$T:=(C_1\times \cdots \times C_n, \pi_{C_1}^{1} \otimes \cdots \otimes
		\pi_{C_n}^{1})$, and in case (b) the motive $T$ is such that $H^j(T)=0$ for
		$j\not=n$, and its transcendental cohomology has Hodge numbers	
		\[ h_{tr}^{p,n-p}(T)=\begin{cases}  g &\hbox{for\ }p\in\{a,b\}\,;\\
		0 &\hbox{for\ } p\not= {n\over 2}\ .\\
		\end{cases}\]
	\end{corollary}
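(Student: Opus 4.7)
The plan is to apply Proposition~\ref{P:quotient} directly and then analyze the remaining summand $T$ separately in the two cases. First I would verify the hypotheses of Proposition~\ref{P:quotient}. In case~(a), the quotient $C_i/H$ is $\mathbb{P}^1$ by the very definition of a hyperelliptic curve; moreover, since $H=\ZZ_2$ is self-inverse, the group $G$ of case~(a) fits the form $G_{a,b}$ (with $a=n$, $b=0$) needed by Proposition~\ref{P:quotient}. In case~(b), taking $\mu_{2g+1}$-invariants of the function field $k(x,y)$ of $C_{g,D}$ (with $y^2=x^{2g+1}+D$) gives $k(x^{2g+1},y)=k(y)$, so $C_{g,D}/\mu_{2g+1}\cong\mathbb{P}^1$. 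Proposition~\ref{P:quotient} therefore yields the desired motivic decomposition of $\bar X$ with $T$ as displayed.

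In case~(a), I would simplify $T$ using the fact that the hyperelliptic involution $\sigma$ acts as $-1$ on $h^1(C)$. Concretely, the Abel-Jacobi map $C\to J(C)$ based at a Weierstra\ss\ point $P$ intertwines $\sigma$ with $[-1]$ on $J(C)$, since $\sigma(x)+x\sim 2P$. By finite-dimensionality of the motive of a curve, this lifts to the cycle-level identity $\Gamma_\sigma\circ \pi^1_C = -\pi^1_C$ in $\CH^1(C\times C)$. Hence for every $g=(h_1,\ldots,h_n)\in G$,
$$\Gamma_g \circ (\pi^1_{C_1}\otimes\cdots\otimes\pi^1_{C_n}) = (-1)^{h_1+\cdots+h_n}\,\pi^1_{C_1}\otimes\cdots\otimes\pi^1_{C_n} = \pi^1_{C_1}\otimes\cdots\otimes\pi^1_{C_n},$$
since $h_1+\cdots+h_n\equiv 0\pmod 2$. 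Averaging over $G$ collapses the idempotent defining $T$ to $\pi^1_{C_1}\otimes\cdots\otimes\pi^1_{C_n}$, as asserted.

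In case~(b), I would determine the Hodge structure of $T$ directly. Since $T$ is a direct summand of $h^1(C_1)\otimes\cdots\otimes h^1(C_n)$, its cohomology is concentrated in degree~$n$. The basis $\omega_j = x^{j-1}\,dx/y$ ($j=1,\ldots,g$) of holomorphic differentials on $C_{g,D}$ satisfies $\zeta^\ast\omega_j = \zeta^j \omega_j$, so the $\mu_{2g+1}$-eigenspace $\xi_k\subset H^1(C_{g,D},\C)$ for the character $\zeta\mapsto \zeta^k$ ($1\le k\le 2g$) is one-dimensional, of Hodge type $(1,0)$ for $1\le k\le g$ and of type $(0,1)$ for $g+1\le k\le 2g$. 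Using the pairing $(h,k)\mapsto \sum_i h_i k_i$ on $\ZZ_{2g+1}^n$, the annihilator of $G_{a,b}$ is generated by $(1,\ldots,1,-1,\ldots,-1)$, so the $G_{a,b}$-invariants of $\bigotimes_i H^1(C_i,\C)$ form the subspace $\bigoplus_{k=1}^{2g}\xi_k^{\otimes a}\otimes \xi_{-k}^{\otimes b}$, which has $h^{a,b}=h^{b,a}=g$ and vanishes in every other bidegree. Since $a>b$ one has $a\neq b$ and $a,b\neq n/2$, so these classes are entirely transcendental, yielding the asserted Hodge numbers of $T$.

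The main subtlety is the cycle-level identity $\Gamma_\sigma\circ\pi^1_C=-\pi^1_C$ invoked in case~(a); this requires the finite-dimensionality of the motive of a curve (or equivalently the explicit isomorphism $h^1(C)\simeq h^1(J(C))$ followed by transport of $[-1]^\ast$). The remaining steps reduce to routine bookkeeping with characters, eigenspaces, and explicit differentials.
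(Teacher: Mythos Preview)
Your argument is correct and follows the same overall strategy as the paper: apply Proposition~\ref{P:quotient}, then simplify $T$ in case~(a) by showing the hyperelliptic involution acts as $-1$ on $\h^1$, and in case~(b) compute the $G_{a,b}$-invariants of $\bigotimes_i H^1(C_i)$ via the eigenspace decomposition for the $\mu_{2g+1}$-action. Two minor remarks. In case~(a), the paper obtains $\Gamma_\sigma\circ\pi^1_C=-\pi^1_C$ directly from Lemma~\ref{L:aut} (take $H=\ZZ_2$ there), which is an elementary correspondence computation; your route via the Abel--Jacobi map is fine, but the appeal to finite-dimensionality is unnecessary, since the intertwining $\alpha\circ\sigma=[-1]\circ\alpha$ together with the Chow-level isomorphism $\h^1(C)\simeq\h^1(J(C))$ and the Deninger--Murre identity $[-1]_*|_{\h^1}=-\mathrm{id}$ already gives the statement rationally. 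In case~(b), the paper only sketches the eigenspace argument and defers to \cite{CH} (for $g=1$, $b=0$) and to \cite[Lemma~8]{Schreieder} (for $g>1$), whereas you carry out the character-theory computation in full; your duality argument identifying the $G_{a,b}$-invariants with the span of $\xi_k^{\otimes a}\otimes\xi_{-k}^{\otimes b}$ is exactly the combinatorics underlying Schreieder's lemma.
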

	\begin{proof}
		By Proposition \ref{P:quotient}, we only need to compute $T$.
		
		(a)	Denote $\sigma_i$ the non-trivial hyperelliptic involution of $C_i$. By
		Lemma \ref{L:aut}, we have 
		$$\sigma_i \circ \pi_{C_i}^1 = -\pi_{C_i}^1.$$
		Writing $\Pi = \pi_{C_1}^{1}
		\otimes \cdots \otimes \pi_{C_n}^{1}$, we therefore have for all $g\in G$
		that $g\circ \Pi = \Pi$, and thus $\sum_{g\in G} g \circ \Pi = |G| \Pi \neq
		0$.
		In particular, we see that the motive $((C_1\times \cdots \times C_n)/G,
		\frac{1}{|G|} \Gamma_p\circ (\pi_{C_1}^{1} \otimes \cdots \otimes
		\pi_{C_n}^{1})\circ {}^t\Gamma_p)$ is isomorphic to the motive $(C_1\times
		\cdots \times C_n, \pi_{C_1}^{1} \otimes \cdots \otimes \pi_{C_n}^{1})$.
		
		(b)	 In case $g=1$ and $b=0$ (which is the set-up of Theorem \ref{ch3}), it
		is proven in \cite[Theorem 3.3]{CH} that $H^n(T)$ has
		dimension 2 when $n$ is odd,
		and that $H^n_{tr}(T)$ has
		dimension $2$ when $n$ is even. 
		
		For the case $g>1$, this is essentially done in Schreieder \cite{Schreieder}.
		The proof goes as follows in the case where each curve $C_i$ is the curve
		$C_{g,1}$. A basis of $H^{1,0}(C_{g,1})$ is given by the differential forms
		$\omega_i = \frac{x^{i-1}}{y}\mathrm{d}x$, $1\leq i \leq g$, and for $\zeta \in
		\mu_{2g+1}$ we have $\zeta^*\omega_i = \zeta^i \omega_i$. The proof then
		consists in understanding the invariants in $H^{0,1}(C_{g,1})^{\otimes d}
		\otimes H^{1,0}(C_{g,1})^{\otimes (n-d)}$. Since this result will not be used in
		this paper, let us only mention that this can be done combinatorially and was
		essentially carried out by Schreieder in \cite[Lemma~8]{Schreieder}. 
	\end{proof}

	\begin{proof}[Proof of Proposition \ref{prop:Xbar}]
		In view of Proposition \ref{prop:markinghyperelliptic} and Corollary
		\ref{C:CH}, this
		is an immediate consequence of Proposition \ref{prop:crucial} (with the Chern
		classes omitted).
	\end{proof}

	\section{The motive of $X$}
	\label{s:main}
	
	This section contains the proof of the main result of this note\,:	
	
	\begin{theorem}\label{t:bv}
		Let $X$ be either a Calabi--Yau variety as in Theorem \ref{ch} or \ref{ch3},
		or
		the Schreieder variety of Theorem \ref{schreieder}.
		Then for all integers $m\geq 1$ the $\QQ$-algebra epimorphisms $\CH^*(X^m) \to
		\overline{\CH}^*(X^m)$ admit a
		section whose image contains the (Chow-theoretic) Chern classes of $X^m$. 
		Moreover, assuming $\dim X\geq 2$, the graded subalgebra $R^*(X) \subseteq
		\CH^*(X)$ generated by
		divisors, Chern classes and by cycles that are the intersection of two cycles
		in~$X$ of positive codimension  injects into $\overline{\CH}^*(X)$.
	\end{theorem}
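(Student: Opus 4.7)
The plan is to invoke the reduction of Remark~\ref{rmk:reduction}: thanks to Propositions~\ref{prop:multmarking} and~\ref{prop:crucial} (together with stability of $(\star)$ under products, \cite[Prop.~4.1]{FV}), both assertions of Theorem~\ref{t:bv} follow once one shows that
\begin{enumerate}
\item the cohomology of $X$ is spanned by algebraic classes in every degree $\neq n = \dim X$, and
\item $X$ admits a marking satisfying the condition $(\star)$ of Definition~\ref{def:Star}.
\end{enumerate}

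For (1), one combines the Hodge diamonds displayed in the introduction with the motivic decomposition of $\bar X$ supplied by Corollary~\ref{C:CH}\,: outside middle degree the cohomology of $\bar X$ consists entirely of Lefschetz summands, hence is algebraic. The resolution $X \to \bar X$ is constructed by a finite sequence of blow-ups along smooth centers with trivial Chow groups (inductive on the number of curve factors, as recalled in the proofs of Theorems~\ref{ch}, \ref{ch3} and~\ref{schreieder})\,; each such blow-up only contributes additional Lefschetz summands, so (1) is inherited by $X$.

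The main step is (2), which I would prove by induction following the explicit construction of $X$. The base case is provided by Proposition~\ref{prop:markinghyperelliptic}\,: each hyperelliptic curve factor $C_i$ carries a marking satisfying $(\star)$ and $(\star_H)$ with all $H$-fixed points distinguished. Taking products then equips $C_1\times\cdots\times C_n$ with a marking satisfying $(\star)$ and $(\star_G)$ (using that $(\star)$ is stable under products and that $G\subset H^n$). The variety $X$ is obtained from this product by a finite sequence of $G$-equivariant blow-ups along smooth centers with trivial Chow groups, followed by passage to the $G$-quotient. At each blow-up step I would invoke the blow-up proposition recalled just after Definition~\ref{def:Star} to propagate $(\star)$ and $(\star_G)$\,; the final quotient is then handled by the standard averaging projector $\frac{1}{|G|}\sum_{g\in G} \Gamma_g$, as in the treatment of $\bar X$ in Section~\ref{sbar}.

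The main obstacle is to verify at each step of the induction that the center being blown up is itself distinguished with respect to the ambient marking and is $G$-stable. These centers arise as strict transforms of fixed loci of subgroups of $G$ acting on the product of hyperelliptic curves, and a direct analysis should show that they decompose as products of $H$-fixed points in certain factors with smaller products of hyperelliptic curves in the remaining ones. Since the $H$-fixed points are distinguished by Proposition~\ref{prop:markinghyperelliptic} and distinguishedness is preserved under the product and blow-up constructions of \cite[\S 4]{FV}, the inductive hypothesis supplies the required markings on the centers, and the induction closes. With (1) and (2) in hand, the first assertion of the theorem (for every $m\geq 1$) follows from Proposition~\ref{prop:Disting} applied to the product marking on $X^m$, and the ``moreover'' part follows from Proposition~\ref{prop:crucial} when $\dim X\geq 2$.
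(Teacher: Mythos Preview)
Your overall strategy---reduce to (1) and (2) via Remark~\ref{rmk:reduction}, then prove (2) by induction on the explicit construction---is exactly the paper's approach. However, your description of the construction of $X$ contains a genuine oversimplification that would not close as stated.

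You write that ``$X$ is obtained from [the product] by a finite sequence of $G$-equivariant blow-ups along smooth centers with trivial Chow groups, followed by passage to the $G$-quotient.'' This is not how the construction works. In all three cases the induction adds one curve factor at a time, and at \emph{each} step one performs blow-ups \emph{and} a quotient; the resulting smooth variety then serves as one factor at the next step. In the Schreieder case (diagram~\eqref{eq:diagschrei}) the sequence of blow-ups and $3$-to-$1$ quotients is interleaved $c$ times per step, and in the $\ZZ_3$ Cynk--Hulek case (Proposition~\ref{p:inductionZ3}) there is in addition a blow-\emph{down} $b\colon Z\to X$ needed to make the resolution crepant. Propagating $(\star)$ through a blow-down is not covered by the blow-up proposition you cite and requires a separate argument (the paper uses \cite[Prop.~4.9]{FV} for $(\star_{\mathrm{Mult}})$ and a Porteous-type computation for $(\star_{\mathrm{Chern}})$).

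Relatedly, your description of the blow-up centers as ``products of $H$-fixed points in certain factors with smaller products of hyperelliptic curves'' is only correct at the very first step. After one round of blow-up and quotient, the relevant fixed loci become projective sub-bundles of exceptional divisors or strict transforms of earlier fixed loci (see Lemma~\ref{lem:schreieder}); showing these are distinguished requires tracking markings through the previous blow-ups and quotients, not just invoking the product structure on $C_1\times\cdots\times C_n$. None of this is fatal to your plan, but the inductive bookkeeping is the substance of Propositions~\ref{p:inductionZ2}, \ref{p:inductionZ3} and~\ref{prop:schreieder}, and your proposal as written does not yet carry it out.
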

	
	We also establish the following\,:
	
	\begin{theorem}\label{t:mck}Let $X$ be either a Cynk--Hulek
		Calabi--Yau variety of dimension $n$ as in Theorem \ref{ch} or \ref{ch3}, or a
		Schreieder variety as in Theorem \ref{schreieder}. 
		Then $X$ admits a multiplicative self-dual Chow--K\"unneth decomposition, in
		the sense of \cite{SV}.
	\end{theorem}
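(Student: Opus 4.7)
The plan is to reduce Theorem \ref{t:mck} to the construction of a marking on $X$ satisfying the condition $(\star_{\mathrm{Mult}})$ of Definition \ref{def:Star}. By Proposition \ref{prop:multmarking}, any such marking produces automatically a self-dual multiplicative Chow--K\"unneth decomposition. In fact I will aim for the full condition $(\star)$, since in view of Remark \ref{rmk:reduction} the same argument then also serves as the key input for Theorem \ref{t:bv}.

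In each of the three cases (Theorems \ref{ch}, \ref{ch3} and \ref{schreieder}) the variety $X$ arises through an explicit iterative construction starting from an ambient product $C_1\times\cdots\times C_n$ of hyperelliptic curves equipped with an action of a finite abelian group $G$, and alternating two operations\,: (i) blowing up a smooth $G$-invariant closed subvariety, and (ii) taking the quotient by (a subgroup of) $G$. The strategy is to propagate a marking satisfying $(\star)$ and $(\star_G)$ through this algorithm.

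For the base of the induction, Proposition \ref{prop:markinghyperelliptic} equips each curve $C_i$ with a marking satisfying $(\star)$ and $(\star_H)$ for the relevant cyclic group $H$, and arranges that every $H$-fixed point embeds distinguishedly. Stability of $(\star)$ and $(\star_H)$ under exterior tensor product (\cite[Prop.~4.1]{FV}) then yields a starting marking on $C_1\times\cdots\times C_n$ satisfying $(\star)$ and $(\star_G)$. For the inductive step, I would invoke the blow-up stability result quoted from \cite{FV} in the excerpt\,: if the center $Y\hookrightarrow X'$ is a smooth $G'$-invariant closed subvariety, both equipped with markings satisfying $(\star)$ and $(\star_{G'})$ such that the inclusion is distinguished (and a mild Chern-class side condition holds), then both the blown-up variety and the exceptional divisor acquire natural markings of the same kind. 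Quotients by finite groups acting through distinguished graphs are compatible with markings, via symmetrization of distinguished cycles as in \cite[\S 4]{FV}. The centers at each stage are disjoint unions of smooth subvarieties built from ($H$- or $\mu_{3^c}$-)fixed-point loci inside iterated blow-ups of products of hyperelliptic curves, so they either are themselves such products (inheriting markings by product stability) or have trivial Chow groups, and their embedding factors through distinguished point embeddings. Iterating finitely many times yields a marking on $X$ satisfying $(\star)$, at which point Proposition \ref{prop:multmarking} delivers the desired multiplicative self-dual Chow--K\"unneth decomposition.

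The main obstacle is purely bookkeeping\,: at each step of the iterative resolutions described in Propositions \ref{p:inductionZ2}, \ref{p:inductionZ3} and in the Schreieder construction of \S \ref{sec:3c}, I must verify explicitly that the blow-up centers are smooth, $G'$-invariant, assembled from previously-distinguished fixed-point loci, and that any Chern-class hypothesis required by the blow-up stability theorem of \cite{FV} is met. Granting this case-by-case verification, no new conceptual input beyond the machinery of \cite{FV} is needed, and the proof reduces to running the inductive propagation outlined above.
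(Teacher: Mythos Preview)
Your proposal is correct and follows essentially the same route as the paper: reduce via Proposition~\ref{prop:multmarking} to the existence of a marking satisfying $(\star)$, and then establish $(\star)$ by propagating it inductively through the explicit constructions (this is exactly Claim~\ref{key}, proved via Propositions~\ref{p:inductionZ2}, \ref{p:inductionZ3} and \ref{prop:schreieder}). One small addendum to your bookkeeping list: in the $\ZZ_3$ Cynk--Hulek case (Proposition~\ref{p:inductionZ3}) the inductive step involves, in addition to blow-ups and quotients, a \emph{blow-down} $b\colon Z\to X$ needed to make the resolution crepant, and the paper handles this extra operation via \cite[Prop.~4.9]{FV} together with a Porteous-formula argument for $(\star_{\mathrm{Chern}})$.
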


	Before giving the proof of Theorems \ref{t:bv} and \ref{t:mck}, we detail
	the inductive constructions of Cynk--Hulek \cite{CH} and Schreieder
	\cite{Schreieder}. This will allow us to prove the theorems, by applying the
	reduction argument outlined in Remark \ref{rmk:reduction}. That is, the proof
	will consist
	in checking
	that each step of the construction only changes algebraic classes in
	cohomology, and
	preserves the condition $(\star)$ of Definition \ref{def:Star}.

	\subsection{$\ZZ_{2}$-actions}
	\label{ss:z2}
	
	\begin{proposition}\label{p:inductionZ2}
		Let $X_i$, $i=1,2$, be smooth
		projective varieties endowed with an action of $H_i = \ZZ_{2}$. Assume that,
		for
		$i=1,2$,
		$(X_i,H_i)$ enjoys the following properties\,:
		\begin{enumerate}[(i)]
			\item $X_i$ has a marking that satisfies $(\star)$ and $(\star_{H_i})$\,; 
			\item the quotients $X_i/H_i$ are smooth\,; 
			\item $B_i := \mathrm{Fix}_{X_i}(H_i)$ is a smooth divisor\,;
			\item  $B_i$ has trivial Chow groups 
			(in particular, $B_i$ has a marking that satisfies $(\star)$)\,;	
			\item The inclusion morphism $B_i \hookrightarrow X_i$ is distinguished with
			respect to the above markings.
		\end{enumerate}
		Let $Z$ be the blow-up of  $X_1\times X_2$ along $B_1\times B_2$, and let
		$\tilde{B}$ be the exceptional divisor\,; the
		action of $H_1\times H_2$ on $X_1\times X_2$ naturally endows $Z$ with an
		action
		of $H_1\times H_2$. Let $$G:= \{(h_1,h_2)\in H_1\times H_2 : h_1+h_2 = 0\}.$$
		Then the
		quotient variety $X:=Z/G$ is smooth and  the pair $(X,H):=(Z/G,(H_1\times
		H_2)/G)$ enjoys
		properties (i)--(v), with $\tilde{B} = \mathrm{Fix}_{X}(H)$.
	\end{proposition}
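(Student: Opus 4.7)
The plan is to verify each of the properties (i)--(v) for the new pair $(X,H)$ by tracking the construction through the stages $X_1\times X_2 \rightsquigarrow Z \rightsquigarrow X$ and invoking the three $(\star)$-propagation results of \cite{FV} (for products, blow-ups of distinguished smooth centers satisfying $(\star)$, and finite-group quotients).

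First I would perform a local analysis of the $G$-action. The non-trivial element $(\sigma_1,\sigma_2)\in G$ fixes precisely $B_1\times B_2\subset X_1\times X_2$, of codimension two; after blowing up this locus, the fixed set of $G$ on $Z$ is exactly the exceptional divisor, a smooth divisor, and the standard local model of a $\ZZ_2$-action fixing a smooth divisor then yields that $X=Z/G$ is smooth. To identify $\mathrm{Fix}_X(H)$, I would note that $[z]\in X$ is fixed by the image of $(1,0)\in H_1\times H_2$ iff $z$ is fixed in $Z$ by $(1,0)$ or by $(0,1)$. Since $B_1\times B_2$ sits as a divisor inside both $B_1\times X_2$ and $X_1\times B_2$, the strict transforms $\widetilde{B_1\times X_2}$ and $\widetilde{X_1\times B_2}$ are isomorphic to $B_1\times X_2$ and $X_1\times B_2$ under the blow-up, and are disjoint in $Z$ (meeting the exceptional divisor along the two disjoint projectivizations of $N_{B_1/X_1}$ and $N_{B_2/X_2}$ inside $\mathbb{P}(N_{B_1/X_1}\oplus N_{B_2/X_2})$). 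Descending to $X$, one finds that $\tilde B := \mathrm{Fix}_X(H)$ is the disjoint union of the smooth divisors $B_1\times(X_2/H_2)$ and $(X_1/H_1)\times B_2$, establishing (iii); the same local model, applied to $H$ acting on $X$ with fixed divisor $\tilde B$, then gives (ii).

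Next, for (iv), each component of $\tilde B$ is a product, so triviality of its Chow groups follows from K\"unneth together with triviality for each $B_i$ (by hypothesis) and for $X_i/H_i$ (available inductively in the intended applications, where $X_i/H_i$ is birational to a product of projective lines). For (i) and (v), I would build the marking on $X$ by chaining the three $(\star)$-propagation formulas: the product of the markings on the $X_i$ yields a marking on $X_1\times X_2$ satisfying $(\star)$ and $(\star_{H_1\times H_2})$; the blow-up formula, applied along the distinguished center $B_1\times B_2$ (whose inclusion is a product of distinguished inclusions and which carries a marking satisfying $(\star)$ since it has trivial Chow groups), produces a marking on $Z$ satisfying $(\star)$ and $(\star_{H_1\times H_2})$, under which the exceptional divisor, the two strict transforms, and all associated morphisms are distinguished; finally the quotient formula yields a marking on $X=Z/G$ satisfying $(\star)$ and $(\star_H)$, and the inclusion $\tilde B\hookrightarrow X$ is distinguished as the $G$-descent of the distinguished inclusion $\widetilde{B_1\times X_2}\sqcup\widetilde{X_1\times B_2}\hookrightarrow Z$.

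The principal obstacle will be bookkeeping rather than conceptual difficulty. First, one must keep track of the fact that the $\tilde B$ of the conclusion (the new fixed divisor $\mathrm{Fix}_X(H)$) is a different subvariety from the exceptional divisor of $Z\to X_1\times X_2$ used in the setup. Second, condition (iv) is not self-contained within (i)--(v): one component of the new $\tilde B$ has the form $(X_1/H_1)\times B_2$, so a clean induction must carry along the auxiliary property ``$X_i/H_i$ has trivial Chow groups''; in the targeted iterative applications to the Cynk--Hulek and Schreieder constructions this auxiliary is immediate, but in general it must be supplied. The remaining verifications--that each of the three operations preserves $(\star)$ and the distinguished-ness of all relevant morphisms--reduce to checking at each stage that the center or group action is compatible with the marking produced at the previous stage, which is exactly the content of the machinery of \cite{FV}.
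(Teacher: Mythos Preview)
Your overall strategy—propagating $(\star)$ through product, blow-up along a distinguished center, and finite quotient via the machinery of \cite{FV}—matches the paper's. The substantive divergence is in identifying $\mathrm{Fix}_X(H)$: the paper asserts, both in the statement and in its proof, that this is the (image of the) exceptional divisor $\tilde B$, a $\PP^1$-bundle over $B_1\times B_2$, and deduces (iv) immediately from that description. You instead find $\mathrm{Fix}_X(H)=B_1\times(X_2/H_2)\,\sqcup\,(X_1/H_1)\times B_2$, the image of the two strict transforms. A direct local check shows that your identification is the correct one: in the blow-up chart $(u,t)$ with $v=ut$, the generator of $G$ acts by $(u,t)\mapsto(-u,t)$, so $X$ has local coordinates $(p,t):=(u^2,t)$ and the class of $(\sigma_1,1)$ in $H$ acts by $(p,t)\mapsto(p,-t)$, with fixed locus $\{t=0\}$—the image of the strict transform of $X_1\times B_2$, not the exceptional locus $\{p=0\}$. (Equivalently, the paper's claim that $\mathrm{Fix}_Z(H_1\times H_2)=\tilde B$ fails: that locus has codimension two in $Z$.)

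Your concern about (iv) is therefore a genuine issue with the Proposition as stated: triviality of the Chow groups of the new fixed divisor requires each $X_i/H_i$ to have trivial Chow groups, which does not follow from hypotheses (i)--(v). The paper's proof avoids this only by virtue of the misidentification. In the intended inductive application (Theorem~\ref{ch}) this causes no trouble—one carries the auxiliary hypothesis ``$X/H$ has trivial Chow groups'' through the induction, the base case $E/\langle[-1]\rangle\cong\PP^1$ being clear and the inductive step holding since $X/H$ is the blow-up of $(X_1/H_1)\times(X_2/H_2)$ along $B_1\times B_2$—so your proposed fix is exactly what is needed, and the remainder of your argument (properties (i), (ii), (iii), (v) via \cite{FV}) goes through as written.
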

	
	\begin{proof} This is our take on the inductive construction of Cynk--Hulek
		\cite[Propositions 2.1 and 2.2]{CH} (where the $X_i$ are in addition assumed
		to
		be Calabi--Yau, and it is proven that the resulting variety $X:=Z/G$ is again
		Calabi--Yau). As in \emph{loc. cit.}, the various varieties fit into a
		commutative diagram
		\[ \xymatrix{
			Z \ar[r] \ar[d] &  X_1\times X_2 \ar[d]\\
			X:=Z/G\ar[r] \ar[d] & (X_1\times X_2)/ G \ar[d]\\
			Y:=Z/(H_1\times H_2) \ar[r]   \ \  &  \ \ \ X_1/H_1\times
			X_2/H_2=:Y_1\times Y_2
		}\]
		(where we adhere to the notation of \cite{CH}). Here, horizontal arrows
		are blow-ups, and vertical arrows are 2-to-1 morphisms. This (doubly !)
		explains
		why $X$ is smooth. On the one hand, $X$ is the blow-up of the quotient
		$(X_1\times X_2)/G$ along the singular locus (which is isomorphic to
		$B_1\times
		B_2$) consisting of $A_1$ singularities.
		On the other hand, $X$ is the double
		cover of the smooth variety $Y$ branched along the smooth divisor obtained by
		blowing up the smooth image of $B_1\times B_2$ in $Y_1\times Y_2$. This also
		shows that the fixed loci $\mathrm{Fix}_Z(H_1\times H_2)$ and
		$\mathrm{Fix}_Z(G)$	(which coincide with the branch loci of the covers $Z\to
		Y$,
		resp. $Z\to X$) are isomorphic to the exceptional divisor $\tilde{B}$
		\cite[Proof of Proposition 2.1]{CH}.
		
		Let us endow $X_1\times X_2$ and $B_1\times B_2$ with the product markings\,;
		these
		satisfy $(\star)$ by \cite[Prop.~4.1]{FV}, the inclusion morphism $B_1\times
		B_2
		\hookrightarrow X_1\times X_2$ is distinguished by \cite[Prop.~3.5]{FV}, and
		the
		pushforwards and pullbacks along the  projection morphisms $X_1\times X_2 \to
		X_i$ and $B_1\times B_2 \to B_i$ are distinguished. Moreover, the pair
		$(X_1\times X_2, H_1\times H_2)$, where $\mathrm{Fix}_{X_1\times
			X_2}(H_1\times H_2) = B_1\times
		B_2$, satisfies properties
		(i) and (v) by \cite[Proposition 4.1]{FV} (and also \cite[Remark~4.3]{FV}).
		Since $\tilde{B}$ is a $\PP^1$-bundle over $B_1\times B_2$, property (iv) is
		satisfied.
		
		Since $\tilde{B} = \mathrm{Fix}_Z(H_1\times H_2) = \mathrm{Fix}_Z(G)$,
		it
		follows 
		that $(Z,H_1\times H_2)$ satisfies $(iii)$.
		Now this is
		enough to ensure that $(Z,H_1\times H_2)$ satisfies properties (i)--(v)
		by \cite[Proposition 4.8]{FV}. 
		
		Consider now the quotient morphism $Z\to X=Z/G$\,; it is a
		$\ZZ_2$-covering branched along the smooth divisor $\tilde{B}$ (which we view
		as a divisor on $Z$ and $X$ via the quotient morphism). 
		We have already seen that
		$\tilde B$ satisfies $(\star)$\,; and $X$ satisfies $(\star)$ by
		\cite[Prop.~4.12]{FV}. That the inclusion morphism $\tilde{B} \to X$ is
		distinguished
		follows from the fact that the inclusion morphism $\tilde B \to Z$ is
		distinguished and the fact that the quotient morphism $Z\to X$ is
		distinguished
		[\emph{loc. cit.}]. In order to conclude, it remains to see that $X$ satisfies
		$(\star_{H})$. But then this again follows from the fact that the
		quotient
		morphism $Z\to X$ is distinguished, together with the fact that $Z$ satisfies
		$(\star_{H_1\times H_2})$.
	\end{proof}

	\subsection{$\ZZ_3$-actions}
	\label{sec:3} 
	We take care of the inductive approach in order to
	treat the case of the Cynk--Hulek Calabi--Yau varieties of Theorem \ref{ch3}.
	This is very similar to the arguments in the next subsection, but we include
	detailed
	arguments here for the sake of readability.

	\begin{proposition}\label{p:inductionZ3}
		Let $X_i$, $i=1,2$, be smooth
		projective Calabi--Yau varieties endowed with an action of $H_i = \ZZ_{3}$.
		Assume the following properties\,:
		\begin{enumerate}[(i)]
			\item The action of $H_i$ on $X_i$ does not preserve the canonical form of
			$X_i$\,;
			\item $X_i$ has a marking that satisfies $(\star)$ and $(\star_{H_i})$\,; 
			\item $B_1 := \mathrm{Fix}_{X_1}(H_1)$ is a smooth divisor, whereas  $B_2 :=
			\mathrm{Fix}_{X_2}(H_2)$ is the disjoint union of a 
			smooth divisor $B_{2,1}$ and a smooth codimension $2$ subvariety $B_{2,2}$\,;
			
			\item $B_i$ has trivial Chow groups (in particular, $B_i$ has a marking that
			satisfies $(\star)$)\,; 
			\item The inclusion morphism $B_i \hookrightarrow X_i$ is distinguished with
			respect to the above markings.
		\end{enumerate}
		Let $$G:= \{(h_1,h_2)\in H_1\times H_2 : h_1+h_2 = 0\}\ .$$		 
		Then there exists a crepant resolution of singularities
		\[ X\ \to\ (X_1\times X_2)/G\ ,\]	
		and an action of $H=(H_1\times H_2)/G\cong\ZZ_3$ on $X$ (induced by the action
		of $\ide\times
		H_2$ on $X_1\times X_2$), such that the pair
		$(X,H)$ satisfies the same assumptions as $(X_2,H_2)$.
	\end{proposition}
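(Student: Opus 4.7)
The plan is to construct $X$ as an iterated blow-up of $X_1 \times X_2$ along smooth $(H_1\times H_2)$-invariant centers contained in $B_1 \times B_2$, followed by the $G$-quotient, in parallel with the $\ZZ_2$-construction of Proposition \ref{p:inductionZ2}, and then to verify (i)--(v) by induction using the blow-up and quotient formalism of \cite{FV}.

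First, I would analyze the singular locus of $(X_1 \times X_2)/G$: a nontrivial element of $G = \langle (1,-1)\rangle$ fixes exactly $B_1 \times B_2 = (B_1 \times B_{2,1}) \sqcup (B_1 \times B_{2,2})$. Near a generic point of $B_1 \times B_{2,1}$ (codimension $2$), the action on the normal $\C^2$ has eigenvalues $(\zeta, \zeta^{-1})$, giving a transversal $A_2$-singularity; near a generic point of $B_1 \times B_{2,2}$ (codimension $3$), the hypothesis (i) forces the three eigenvalues on the normal $\C^3$ to have trivial product, yielding a transversal $\frac{1}{3}(1,1,1)$-singularity. I then construct the resolution: let $\tau_1 : Z_1 \to X_1\times X_2$ be the blow-up along $B_1 \times B_{2,2}$, let $\tau_2 : Z_2 \to Z_1$ be the blow-up along the strict transform of $B_1 \times B_{2,1}$, and if necessary let $\tau_3: Z \to Z_2$ be a final blow-up along the remaining codimension-$2$ fixed locus so as to resolve the $A_2$-singularity minimally. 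A local toric check confirms that $X := Z/G$ is smooth and that the induced morphism $X \to (X_1\times X_2)/G$ is crepant; the action of $H_1\times H_2$ lifts to $Z$ and descends to an action of $H := (H_1\times H_2)/G \cong \ZZ_3$ on $X$, induced by $\ide \times H_2$.

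I would then verify (i)--(v). For (ii) and (v), I apply \cite[Propositions 4.1, 4.5, 4.8 and 4.12]{FV}: the product $(X_1\times X_2, H_1\times H_2)$ has a product marking satisfying $(\star)$ and $(\star_{H_1\times H_2})$, the inclusions $B_1\times B_{2,j} \hookrightarrow X_1\times X_2$ are distinguished and the centers have trivial Chow groups, so each blow-up $\tau_i$ preserves $(\star)$ and $(\star_{H_1\times H_2})$; the $G$-quotient then preserves $(\star)$ and produces $(\star_H)$ for $X$, while the inclusion of the fixed divisor in $X$ remains distinguished. For (i), the $H$-action on $\omega_X$ is induced from the $(\ide\times H_2)$-action on $\omega_{X_1}\otimes \omega_{X_2}$, which is non-trivial by hypothesis on $(X_2, H_2)$. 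For (iii) and (iv), I identify $\mathrm{Fix}_X(H)$ by tracing which $G$-orbits on $Z$ are stabilized by a representative of the generator of $H$: the divisorial component arises from the strict transform of $X_1 \times B_{2,1}$ (stabilized by $(0,1)$), while the codimension-$2$ component arises from the strict transform of $X_1 \times B_{2,2}$ together with suitable exceptional loci of the $\tau_i$ (stabilized by the other coset representatives $(1,0)$ and $(2,2)$). Each component fibers as an iterated projective bundle over a product of $H_i$-fixed loci with trivial Chow groups, hence itself has trivial Chow groups.

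The main obstacle, I expect, will be the explicit toric verification that the chosen sequence of blow-ups yields a \emph{crepant} resolution — especially for the $A_2$-stratum, where a single blow-up does not suffice — together with the careful bookkeeping needed to confirm that $\mathrm{Fix}_X(H)$ splits into exactly one smooth divisor and one smooth codimension-$2$ piece, both with trivial Chow groups.
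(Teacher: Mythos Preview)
Your outline diverges from the paper's construction at the most delicate point, and the divergence creates a genuine gap. Following Cynk--Hulek \cite[Proposition~3.1]{CH}, the paper first blows up $X_1\times X_2$ along \emph{all} of $B_1\times B_2$ to get $Z_1$, then blows up the codimension-$2$ part of $\mathrm{Fix}_{Z_1}(G)$ (two disjoint copies of $W_1\cong B_1\times B_{2,1}$) to get $Z_2$, and sets $Z:=Z_2/G$. The resulting $Z$ is smooth but \emph{not} crepant over $(X_1\times X_2)/G$; the crepant model $X$ is obtained from $Z$ by a further \emph{blow-down} $b\colon Z\to X$ contracting the strict transform of the first exceptional divisor, with exceptional locus $V\cong W_1\subset X$ of codimension~$2$. (This blow-down is precisely what distinguishes the Cynk--Hulek variety from the Schreieder variety; cf.\ Remark~\ref{diff}.) Your proposal stops at the quotient and asserts crepancy ``by a local toric check''; but over the $A_2$-stratum your sequence of blow-ups followed by the $\ZZ_3$-quotient does not produce the minimal resolution without an extra contraction, and you have no mechanism in place to handle one.

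The blow-down is also where the verification of $(\star)$ becomes non-routine. The results you invoke from \cite{FV} (Propositions~4.1, 4.5, 4.8, 4.12) handle products, blow-ups along distinguished centers, and finite quotients, and they indeed carry $(\star)$ and $(\star_G)$ through to $Z$. But to descend $(\star)$ along $b\colon Z\to X$ the paper appeals to \cite[Proposition~4.9]{FV}, which requires checking that the correspondence ${}^t\Gamma_b\circ\Gamma_b\in\CH^n(Z\times Z)$ is distinguished; this is done by observing it is supported on $\Delta_Z\cup(E\times_V E)$, where the fibre product has trivial Chow groups. The Chern-class condition $(\star_{\mathrm{Chern}})$ for $X$ then needs a separate argument via Porteous' formula. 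None of this appears in your plan. Finally, your identification of $\mathrm{Fix}_X(H)$ is off: in the paper the codimension-$2$ component is $V\cong B_1\times B_{2,1}$ (arising from the blow-down), while the divisorial component is the image of the exceptional divisor over the codimension-$3$ stratum $W_2\cong B_1\times B_{2,2}$---the opposite of what you describe, and not of the form $X_1\times B_{2,j}$.
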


	\begin{proof} This is essentially the inductive argument of \cite[Proposition
		3.1]{CH}, on which we have additionally grafted condition $(\star)$.
		We briefly resume the construction of $X$ given in \cite[Proposition 3.1]{CH}
		(retaining the notation of \emph{loc. cit.}).
		
		The quotient $(X_1\times X_2)/G$
		has $A_2$-singularities along a codimension $2$ stratum $W_1$ (isomorphic to
		$B_1\times B_{2,1}$), plus other singularities along a codimension $3$ stratum
		$W_2$ (isomorphic to $B_1\times B_{2,2}$). A crepant resolution
		\[ X\ \to\ (X_1\times X_2)/G\ \]	
		is explicitly described in local coordinates in \cite[Proof of Proposition
		3.1]{CH}.  Moreover, it is checked in \cite[Proof of Proposition 3.1]{CH} 
		that $(X,H)$ satisfies conditions (i) and (iii) (just as $(X_2,H_2)$).
		Therefore it only remains to check that $X$ also satisfies
		conditions (ii), (iv) and (v).

		As explained in \emph{loc. cit.}, the variety $X$ can also be obtained as
		follows\,:
		Let $Z_1$ be the blow-up of  $X_1\times X_2$ along $B_1\times B_{2}$.		
		The action of $$G:= \{(h_1,h_2)\in H_1\times H_2 : h_1+h_2 = 0\}.$$		 
		on $X_1\times X_2$ naturally endows $Z$ with an action
		of $G$. Let $Z_2\to Z_1$ be the blow-up with center the codimension $2$ part
		of $\mathrm{Fix}_{Z_1}(G)$ (this center consists of two disjoint copies of
		$W_1$, as can be seen from \cite[Lemma 18]{Schreieder}). The action of $G$
		lifts to $Z_2$, and we define
		\[ Z:=Z_2/G.\]
		The crepant resolution $X$ is now attained by performing a blow-down 
		\[ b : Z\ \to\ X, \]
		where the exceptional divisor of $b$ in $Z$ corresponds to the strict
		transform
		of the exceptional divisor of the first blow-up $Z_1\to X_1\times X_2$. The
		exceptional locus $V\subset X$ of $b$ is an isomorphic copy of $W_1$ (this
		exceptional locus $V\cong W_1$ corresponds to the intersection of the $2$
		irreducible components of the exceptional divisor in $X$ lying over the
		stratum
		$W_1$).
		
		Once again, we endow $X_1\times X_2$ and $B_1\times B_2$ with the product
		markings. These markings
		satisfy $(\star)$ by \cite[Prop.~4.1]{FV}, the inclusion morphism $B_1\times
		B_2
		\hookrightarrow X_1\times X_2$ is distinguished by \cite[Prop.~3.5]{FV}, and
		the
		pushforwards and pullbacks along the  projection morphisms $X_1\times X_2 \to
		X_i$ and $B_1\times B_2 \to B_i$ are distinguished. Moreover, the pair
		$(X_1\times X_2, H_1\times H_2)$, where $\mathrm{Fix}_{X_1\times
			X_2}(H_1\times H_2) = B_1\times
		B_2$, satisfies properties
		(ii), (iv) and (v) by \cite[Prop.~4.1]{FV} (and also \cite[Rem~4.3]{FV}, plus
		the fact that condition (iv) is stable under taking products). In view of
		\cite[Prop.~4.8]{FV}, this implies that
		$(Z_1,H_1\times H_2)$ and $(Z_1,G)$ satisfy (ii). 
		
		The codimension $2$ part of $\mathrm{Fix}_{Z_1}(G)$ consists of $2$ disjoint
		copies of $W_1\cong B_1\times B_{2,1}$, and so it has a marking satisfying
		$(\star)$. Let $E_1\subset Z_1$ denote the exceptional divisor. The inclusion
		of
		the $2$ copies of $W_1$ in $Z_1$ is distinguished, because the inclusion
		morphism $W_1\to E_1$  is distinguished (indeed, both $W_1$ and $E_1$ have
		trivial Chow groups), and the inclusion morphism $E_1\to Z_1$ is also
		distinguished. Again applying \cite[Prop.~4.8]{FV}, and reasoning as
		before, this implies that
		$(Z_2,H_1\times H_2)$ and $(Z_2,G)$ in turn satisfy (ii), (iv) and (v). 	
		
		The next step is to take the quotient $Z_2\to Z:=Z_2/G$. Here, 
		\cite[Prop.~4.12]{FV} ensures that $Z$ has a marking satisfying $(\star)$
		and that the quotient morphism $Z_2\to Z$ is distinguished. This last fact,
		combined with the fact that $(Z_2,H_1\times H_2)$ verifies (ii), ensures that
		$(Z,H)$ also verifies (ii).	
		The fact that $(Z_2,H_1\times H_2)$ verifies (v), plus the fact that the
		quotient morphism $Z_2\to Z$ is distinguished, ensures that $(Z,H)$ also
		verifies (v). Condition (iv) is satisfied for $(Z,H)$ since the fixed locus is
		dominated by the fixed locus of $(Z_2,G)$ which satisfied (iv).		
		
		The final step in the inductive process is the blow-down $b$ from $Z$ to $X$.
		Here, we know that the exceptional divisor $E\subset Z$ of $b$ has a marking
		that verifies $(\star)$ and is such that the inclusion is distinguished. Also,
		we
		know that the exceptional locus $V\subset X$ (is isomorphic to $W_1$ and so)
		has
		trivial Chow groups, and thus verifies $(\star)$. We remark that the
		correspondence 
		\[  {}^t \Gamma_b\circ \Gamma_b\ \ \in\  \CH^n(Z\times Z)\] 
		is supported on $\Delta_Z \cup E\times_V E$
		(by refined intersection). The fiber product $E\times_V E$ is a $\PP^1\times
		\PP^1$-bundle over $V$; as such, it is smooth irreducible of dimension $n$ and
		has trivial Chow groups. The inclusion $E\times_V E\subset E\times E$ is
		distinguished (both sides have trivial Chow groups), and the inclusion
		$E\times
		E\subset Z\times Z$ is distinguished (as $E\subset V$ is distinguished).
		Therefore, $E\times_V E\subset Z\times Z$ is distinguished, and so we may
		conclude that  ${}^t \Gamma_b\circ \Gamma_b$ is distinguished in
		$\CH^n(Z\times
		Z)$.
		
		Now, applying \cite[Prop.~4.9]{FV}, it follows
		that $X$ has a marking that verifies $(\star_{\mathrm{Mult}})$, and such that
		the blow-up morphism $b$ is distinguished. To show that the marking of $X$
		also
		verifies $(\star_{\mathrm{Chern}})$, one can reason as in the technical
		\cite[Lemma 6.4]{SV},
		with $\operatorname{DCH}^\ast(-) $ instead of $\CH^\ast(-)_{(0)}$ (\emph{cf.}
		also
		\cite[Rem.~4.15]{FV}, which deals with the same situation).
		Alternatively, one can argue as follows\,:
		according to Porteous' formula \cite[Theorem 15.4]{F}, the difference
		\[ d:= c_i(Z) - b^\ast c_i(X) \ \ \ \in\ \CH^i(Z) \]
		can be expressed in terms of (push-forwards to $Z$ of pullbacks to $E$ of)
		Chern classes of $V$ and Chern classes of the normal bundle of $V$ in $X$. But
		any cycle on $E$ is distinguished (since $E$ has trivial Chow groups), and the
		inclusion morphism $E\to Z$ is distinguished, and hence this difference $d$ is
		distinguished. As the Chern classes $c_i(Z)$ are distinguished, this implies
		that $b^\ast c_i(X)$ is distinguished. Since the morphism $b$ is
		distinguished,
		this implies that
		\[ c_i(X)=b_\ast b^\ast c_i(X)\ \ \ \in\ \operatorname{DCH}^i(X),\]
		\emph{i.e.}, condition $(\star_{\mathrm{Chern}})$ (and hence condition
		$(\star)$) is verified
		for $X$.

		To finish the proof, we observe that the inclusion of (each copy of) $W_1$ in
		the exceptional divisor of $Z_1\to X_1\times X_2$ is distinguished (since both
		$W_1$ and the exceptional divisor have trivial Chow groups). This implies that
		the same is true for the inclusion of (each copy of) $W_1$ in the strict
		transform of this exceptional divisor in $Z_2$. Since the inclusion of the
		exceptional divisor in $Z_2$ is distinguished, this implies that the inclusion
		of (each copy of) $W_1$ in $Z_2$ is distinguished. Since the quotient morphism
		$Z_2\to Z$ and the blow-up $b$ are distinguished, it follows that the
		inclusion
		of $V\cong W_1$ in $X$ is distinguished.
		
		The fact that $(Z,H)$ verifies (ii), plus the fact that $b$ is distinguished,
		guarantees that $(X,H)$ verifies (ii). The fixed locus $\mathrm{Fix}_{X}(H)$
		is
		the disjoint union of the codimension $2$ component $V$, and a divisor (which
		is
		the isomorphic image in $X$ of the exceptional divisor in $Z_2$ lying over the
		codimension $3$ stratum $W_2\subset X_1\times X_2$). In view of the above,
		this
		implies that $(X,H)$ verifies the conditions (ii), (iv) and (v), and so we are
		done.	
	\end{proof}

	\subsection{$\ZZ_{3^c}$-actions}\label{sec:3c}
	
	In this section, we want to show that the inductive approach of Schreieder in
	\cite[\S 8.2]{Schreieder} can be strengthened to take into account the motivic
	structure and to keep track of the condition $(\star)$. For clarity, we follow
	the notations of \cite{Schreieder}.
	
	Precisely, for natural numbers $a\neq b$ and $c\geq 0$, let $S_c^{a,b}$ denote
	the family of pairs $(X,\phi)$, consisting of a smooth projective complex
	variety $X$ of dimension $a+b$ and an automorphism $\phi \in \mathrm{Aut}(X)$
	of
	order $3^c$, such that properties (i)--(v) below hold. Here $\zeta$ denotes a
	fixed primitive $3^c$-th root of unity and $g:= (3^c-1)/2$.
	\begin{enumerate}[(i)]
		\item The decomposition $\h(X) = T \oplus \h(X)^{\langle \phi\rangle}$ is such
		that
		$h^{a,b}(T) =
		h^{b,a}(T) = g$ and $h^{p,q}(T) = 0$ for all other $p\neq q$, and such that
		the
		summand $ \h(X)^{\langle \phi\rangle}$ (which is the $\phi$-invariant part of
		the motive of $X$) is isomorphic to a direct sum of Lefschetz motives $ 
		\bigoplus \mathds{1}(*)$.
		\item The action of $\phi$ on $H^{a,b}(X)$ has eigenvalues
		$\zeta,\ldots,\zeta^g$.
		\item The set $\mathrm{Fix}_X(\phi^{3^{c-1}})$ can be covered by local
		holomorphic charts such that $\phi$ acts on each coordinate function by
		multiplication with some power of $\zeta$.
		\item For $0\leq l \leq c-1$, the motive of $\mathrm{Fix}_X(\phi^{3^l})$ is
		isomorphic to a sum of Lefschetz motives and the action of $\phi$ on that
		motive
		is the identity. 
		\item $X$ has a marking that satisfies the condition $(\star)$ and the
		condition $(\star_{\langle\phi\rangle})$. Moreover, the inclusion morphism
		$\mathrm{Fix}_X(\phi^{3^l}) \hookrightarrow X$ is distinguished for $0\leq l
		\leq c-1$.
	\end{enumerate}
	
	In condition (v), note that it makes sense to say that the inclusion morphism
	is
	distinguished\,: by (iv) the motive of $\mathrm{Fix}_X(\phi^{3^l})$ is
	isomorphic to a sum of Lefschetz motives, and in particular it admits a marking
	that satisfies $(\star)$ (\textit{cf.} \cite[Prop.~5.2]{FV}).
	
	Our condition (i) (resp. (iv)) is a motivic version of conditions (1) and (3)
	(resp. (5)) of Schreieder.  Our conditions (ii) and (iii) are exactly the
	conditions (2) and (4) of Schreieder. The new feature is our condition (v).
	
	As in \cite[\S 8.2]{Schreieder}, we note that it follows from (iii) that
	$\mathrm{Fix}_X(\phi^{3^l})$ is smooth for all $0\leq l \leq c-1$.

	With this strengthened definition of $S_c^{a,b}$ (compared to that of
	\cite{Schreieder}), we have the exact same statement as
	\cite[Proposition~19]{Schreieder}\,:
	\begin{proposition}\label{prop:schreieder}
		Let $(X_1,\phi_1) \in S_c^{a_1,b_1}$ and $(X_2,\phi_2)\in S_c^{a_2,b_2}$. Then
		$$(X_1\times X_2)/\langle \phi_1\times\phi_2\rangle$$
		admits a smooth model $X$ such that the automorphism $\mathrm{id}\times
		\phi_2$
		on $X_1\times X_2$ induces an automorphism $\phi\in \mathrm{Aut}(X)$ with
		$(X,\phi) \in S_c^{a,b}$, where $a=a_1+a_2$ and $b= b_1+b_2$.
	\end{proposition}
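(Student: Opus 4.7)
The plan is to execute Schreieder's inductive construction in \cite[\S 8.2]{Schreieder} of the smooth model $X$, while propagating our reinforced conditions (i) and (v)---which go beyond the properties Schreieder originally tracked---at each step. Structurally the argument parallels Propositions \ref{p:inductionZ2} and \ref{p:inductionZ3}, but is more intricate because the automorphism has order $3^c$, so the resolution proceeds via a tower of blow-ups along successive proper transforms of the codimension-$\ge 2$ components of $\mathrm{Fix}_{X_1\times X_2}((\phi_1\times\phi_2)^{3^l})$ for $l=c-1, c-2, \ldots, 0$, followed by the quotient by $\langle\phi_1\times\phi_2\rangle$ and a final blow-down analogous to the map $b : Z\to X$ of Proposition~\ref{p:inductionZ3}. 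I would begin by equipping $X_1\times X_2$ with the product marking, which by \cite[Prop.~4.1]{FV} combined with hypothesis (v) on each factor satisfies $(\star)$ and $(\star_{\langle\phi_1\times\phi_2\rangle})$; for $0\le l\le c-1$, the fixed locus
\[ \mathrm{Fix}_{X_1\times X_2}((\phi_1\times\phi_2)^{3^l})\ =\ \mathrm{Fix}_{X_1}(\phi_1^{3^l})\times\mathrm{Fix}_{X_2}(\phi_2^{3^l}) \]
has Lefschetz motive by hypothesis (iv), hence trivial Chow groups and a marking satisfying $(\star)$ by \cite[Prop.~5.2]{FV}, with distinguished inclusion by \cite[Prop.~3.5]{FV}. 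Condition (i) for the starting point follows from the K\"unneth decomposition: the only source of the transcendental part is $T_1\otimes T_2$, and the eigenvalue arithmetic of (ii)---already carried out by Schreieder in \cite[Lemma~8]{Schreieder}---controls how the $\phi_1\times\phi_2$-invariants distribute among bidegrees.

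Next, I would propagate conditions (i)--(v) through Schreieder's tower. At each blow-up of a smooth center with trivial Chow groups and distinguished inclusion, \cite[Prop.~4.8]{FV} propagates both $(\star)$ and $(\star_{\langle\phi_1\times\phi_2\rangle})$; the motive of the blow-up differs from that of the ambient variety only by Lefschetz summands coming from the projective bundle over the center (which has Lefschetz motive by the inductive version of (iv)), so the transcendental motive $T$ is preserved and condition (i) is maintained throughout. The new fixed loci of powers of $\phi_1\times\phi_2$ consist of exceptional divisors together with strict transforms of the old fixed loci, each a projective bundle over a Lefschetz-motive variety, hence itself of Lefschetz motive, which propagates (iv). At the final quotient step, \cite[Prop.~4.12]{FV} provides a marking satisfying $(\star)$ together with a distinguished quotient morphism, which also propagates $(\star_{\langle\phi\rangle})$ since $\phi$ is induced by $\mathrm{id}\times\phi_2$. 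Conditions (ii) and (iii) are verified by the local analysis already in \cite[\S 8.2]{Schreieder}.

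The main obstacle will be the verification of $(\star_{\mathrm{Chern}})$ at the blow-down step, in analogy with the map $b : Z\to X$ of Proposition~\ref{p:inductionZ3}. As in that proof, the key point is that the correspondence ${}^t\Gamma_b\circ \Gamma_b$ is supported on $\Delta_Z \cup E\times_V E$, where $E$ denotes the exceptional divisor and $V$ its image; since $V$ has trivial Chow groups, so does the projective bundle $E\times_V E$, and its inclusion into $Z\times Z$ is distinguished. Invoking \cite[Prop.~4.9]{FV} then yields a marking of the target satisfying $(\star_{\mathrm{Mult}})$ with $b$ distinguished. For $(\star_{\mathrm{Chern}})$ itself, Porteous' formula \cite[Theorem~15.4]{F}, applied exactly as at the end of the proof of Proposition~\ref{p:inductionZ3}, expresses $c_i(Z) - b^*c_i(X)$ as a pushforward from $E$ of a polynomial in Chern classes of $V$ and of its normal bundle in $X$, all of which are distinguished because $V$ has trivial Chow groups; combining distinguishedness of $c_i(Z)$ (by induction) and of $b$ then gives $c_i(X) = b_*b^*c_i(X)\in\operatorname{DCH}^i(X)$. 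The same machinery, applied inductively, simultaneously ensures that the inclusions $\mathrm{Fix}_X(\phi^{3^l})\hookrightarrow X$ are distinguished for all $0\le l\le c-1$, closing the induction on condition (v).
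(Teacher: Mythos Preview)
Your proposal misreads the structure of Schreieder's construction, and this leads you to attack a phantom difficulty while skipping the actual work. The smooth model $X$ is \emph{not} obtained by performing all the blow-ups first and then a single quotient by the full group $\langle\phi_1\times\phi_2\rangle\cong\ZZ_{3^c}$, followed by a blow-down. Rather, one filters $G_c=\langle\phi_1\times\phi_2\rangle$ by the subgroups $G_i=\langle(\phi_1\times\phi_2)^{3^{c-i}}\rangle$ and \emph{alternates}: starting from $Y_0=X_1\times X_2$, one blows up twice along the fixed locus of $\eta_{i+1}$ to pass from $Y_i$ to $Y_i''$, then takes the $\ZZ_3$-quotient $Y_{i+1}=Y_i''/\langle\eta_{i+1}\rangle$, and iterates up to $Y_c=X$. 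Each left-pointing arrow in the zig-zag diagram is a $3$-to-$1$ cover branched along a smooth divisor; each right-pointing arrow is a composite of two blow-ups along smooth centers. Crucially, there is \emph{no} blow-down step anywhere---this is precisely the distinction between Schreieder's $X_S$ and Cynk--Hulek's crepant $X_{CH}$ recorded in Remark~\ref{diff}. So the ``main obstacle'' you identify (propagating $(\star_{\mathrm{Chern}})$ through a blow-down via Porteous and \cite[Prop.~4.9]{FV}) simply does not arise.

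What you are missing is the inductive analysis of the fixed loci $\mathrm{Fix}_{Y_i}(\Gamma)$, $\mathrm{Fix}_{Y_i'}(\Gamma)$, $\mathrm{Fix}_{Y_i''}(\Gamma)$ for \emph{all} subgroups $\Gamma\subseteq G$ not contained in $G_i$, not just for powers of $\phi_1\times\phi_2$. This is the content of the paper's Lemma~\ref{lem:schreieder}, which strengthens \cite[Lemma~20]{Schreieder}: one must track, through each blow-up and each $\ZZ_3$-quotient, that these fixed loci remain smooth with Lefschetz motive, that the $G$-action restricts to each component, and that the inclusions into the ambient $Y_i^{(\prime,\prime\prime)}$ are distinguished. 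The case analysis (components arising as projective sub-bundles of exceptional divisors versus strict transforms, and images under the $\ZZ_3$-quotient) is the substance of the argument; the relevant citations are \cite[Prop.~4.8]{FV} for blow-ups and \cite[Prop.~4.12]{FV} for the quotients, with $(\star_{\mathrm{Chern}})$ propagated automatically by those results. Condition (i) then follows at the end by observing $\h(X)\simeq\h(Y_0)^{G_c}\oplus\bigoplus\mathds{1}(*)$ and invoking finite-dimensionality.
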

	
	Precisely, the variety $X$ is constructed inductively as follows. Consider the
	subgroup of $\mathrm{Aut}(X)$ given by $$G:= \langle \phi_1\times \mathrm{id},
	\mathrm{id}\times \phi_2\rangle.$$ For each $1\leq i \leq c$, consider the
	element of order $3^i$ in $G$ given by $\eta_i :=
	(\phi_1\times\phi_2)^{3^{c-i}}$, generating a cyclic subgroup $G_i:= \langle
	\eta_i\rangle \subseteq G$. We obtain a filtration $$0 = G_0 \subset G_1\subset
	\cdots \subset G_c = \langle \phi_1\times \phi_2\rangle,$$ such that each
	quotient $G_i/G_{i-1}$ is cyclic of order 3, generated by the image of
	$\eta_i$.
	We set $$Y_0:= X_1\times X_2$$ equipped with the natural action of $G$. 
	We define inductively
	\begin{align*}
	Y_{i} &= Y_{i-1}''/\langle\eta_i\rangle,\\
	Y_i' &= \text{Blow up of $Y_i$ along $\mathrm{Fix}_{Y_i}(\eta_{i+1})$},\\
	Y_i''& =\text{Blow up of $Y'_i$ along $\mathrm{Fix}_{Y'_i}(\eta_{i+1})$}.
	\end{align*}
	Here the action of the group $G$ carries at each step. Schreieder shows that
	each $Y_i$ is a smooth model of $Y_{0}/G_i$, so that the variety $X$ of
	Proposition \ref{prop:schreieder} is nothing but $Y_c$ equipped with the action
	of $G/G_c$. To summarize, we have the following diagram\,:
	
	\begin{equation}\label{eq:diagschrei}
	\xymatrix{ & Y''_{c-1} \ar[dl] \ar[dr] & & \cdots  \ar[dl] \ar[dr]  && Y_1'' 
		\ar[dl] \ar[dr] && Y_0''  \ar[dl] \ar[dr] \\
		Y_c && Y_{c-1} && Y_2 && Y_1 && Y_0.
	}
	\end{equation}
	Each arrow to the right corresponds to the composition $Y_i'' \to Y_i' \to Y_i$
	of two blow-ups along fix loci (which turn out to be smooth), and each arrow to
	the left corresponds to a $3-1$ cover, branched along a smooth divisor\,; see
	\cite{Schreieder}. 
	
	\begin{proof}[Proof of Proposition \ref{prop:schreieder}]
		Since there is no point in repeating Schreieder's arguments in full, we only
		indicate how to adapt his proof to show that the motivic statements and the
		condition (v) carry through.
		
		First, since our conditions (i)--(v) imply Schreieder's conditions (1)--(5),
		we
		only need to prove that $X$ satisfies conditions (i), (iv) and (v). Concerning
		the latter two, they are contained in the following strengthening of
		\cite[Lemma~20]{Schreieder}\,:
		\begin{lemma}\label{lem:schreieder}
			Let $\Gamma\subseteq G$ be a subgroup which is not contained in $G_i$. Then
			$\mathrm{Fix}_{Y_i}(\Gamma)$, $\mathrm{Fix}_{Y'_i}(\Gamma)$ and
			$\mathrm{Fix}_{Y''_i}(\Gamma)$ are smooth, their motives are isomorphic to
			direct sums of Lefschetz motives, their $G$-actions restrict to actions on
			each
			irreducible component and the $G_c$-fixed part of their motive is also fixed
			by
			$G$. Moreover, $Y_i$, $Y_i'$ and $Y_i''$ are naturally equipped with markings
			that satisfy $(\star)$ and $(\star_G)$ with the additional property that the
			embeddings $\mathrm{Fix}_{Y_i}(\Gamma) \hookrightarrow Y_i$,
			$\mathrm{Fix}_{Y'_i}(\Gamma)\hookrightarrow Y_i'$ and
			$\mathrm{Fix}_{Y''_i}(\Gamma)\hookrightarrow Y_i''$ are distinguished.
		\end{lemma}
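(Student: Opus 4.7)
The plan is to proceed by induction on $i$, following the structure of diagram \eqref{eq:diagschrei}. The base case $i=0$ uses the hypothesis that $(X_j,\phi_j)\in S_c^{a_j,b_j}$ for $j=1,2$\,: equipping $Y_0 = X_1\times X_2$ with the product marking yields $(\star)$ by \cite[Prop.~4.1]{FV} and $(\star_G)$ by \cite[Rem.~4.3]{FV}, since $G$ is generated by $\phi_1\times\mathrm{id}$ and $\mathrm{id}\times\phi_2$. For any subgroup $\Gamma\subseteq G$, the fixed locus $\mathrm{Fix}_{Y_0}(\Gamma)$ is a disjoint union of products of fixed loci for powers of $\phi_j$ on $X_j$. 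Condition (iv) for the $(X_j,\phi_j)$ ensures these have motives isomorphic to direct sums of Lefschetz motives, and the products inherit this structure\,; condition (v) together with \cite[Prop.~3.5]{FV} then gives the distinguished embedding $\mathrm{Fix}_{Y_0}(\Gamma)\hookrightarrow Y_0$.

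For the inductive step, assume the conclusions hold for $Y_i$, and treat the three stages in the passage to $Y_{i+1}$. For the first blow-up $Y_i'\to Y_i$, the center is $\mathrm{Fix}_{Y_i}(\eta_{i+1})$\,; since $\eta_{i+1}$ generates $G_{i+1}/G_i$, the subgroup $\langle\eta_{i+1}\rangle$ is not contained in $G_i$, so by the inductive hypothesis this center is smooth with Lefschetz motive and distinguished embedding. Applying \cite[Prop.~4.5 and 4.8]{FV} (quoted in the excerpt) yields that $Y_i'$ inherits a marking satisfying $(\star)$ and $(\star_G)$, with the blow-down morphism distinguished. An analogous argument handles the second blow-up $Y_i''\to Y_i'$. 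For $\Gamma\not\subseteq G_i$, the fixed locus $\mathrm{Fix}_{Y_i'}(\Gamma)$ decomposes as the strict transform of $\mathrm{Fix}_{Y_i}(\Gamma)$ together with possibly additional components in the exceptional divisor\,; each such component is either isomorphic to a component of $\mathrm{Fix}_{Y_i}(\Gamma)$ or is a projective bundle over a component of $\mathrm{Fix}_{Y_i}(\Gamma)\cap \mathrm{Fix}_{Y_i}(\eta_{i+1})$. Both types preserve the Lefschetz property via the projective bundle formula, and the distinguishedness of the embedding follows because the exceptional divisor is distinguished and intersections of distinguished subvarieties (with trivial Chow groups here) remain distinguished by the formalism of \cite{FV}.

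For the quotient stage $q\colon Y_i''\to Y_{i+1}=Y_i''/\langle\eta_{i+1}\rangle$, Schreieder's construction ensures that after the two prior blow-ups the action of $\eta_{i+1}$ has smooth fixed divisor, so $Y_{i+1}$ is smooth. Applying \cite[Prop.~4.12]{FV} provides $Y_{i+1}$ with a marking satisfying $(\star)$ such that $q$ is distinguished\,; the residual $G$-action on $Y_{i+1}$ is realized via $G/\langle\eta_{i+1}\rangle$, and the $G$-equivariance of the construction propagates $(\star_G)$ to $Y_{i+1}$. Fixed loci $\mathrm{Fix}_{Y_{i+1}}(\Gamma)$ for $\Gamma\not\subseteq G_{i+1}$ descend from distinguished fixed loci of $Y_i''$ via $q$, inheriting the Lefschetz property and distinguishedness. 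The statement that the $G_c$-fixed part of each fixed-locus motive is $G$-fixed follows because the Lefschetz summands arise from fundamental classes of $G$-stable subvarieties (a property already built into the base case via condition (v) for the $X_j$), and the operations of blow-up along $G$-stable centers and quotient by a subgroup of $G$ preserve this $G$-stability throughout the induction.

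The main obstacle lies in the bookkeeping of $\mathrm{Fix}_{Y_i'}(\Gamma)$ under blow-up\,: a single component of $\mathrm{Fix}_{Y_i}(\Gamma)$ may split into its strict transform plus new components contained in the exceptional divisor, and each must be verified separately to be smooth, distinguished, and of Lefschetz motive. This relies on two principles from \cite{FV}, namely that projective bundles over distinguished bases with Lefschetz motive remain distinguished with Lefschetz motive, and that fiber products of distinguished subvarieties with trivial Chow groups are themselves distinguished. A secondary subtlety is coordinating the residual $G$-action at each quotient step so that $(\star_G)$ propagates\,; this is ensured by the equivariant formulation of \cite[Prop.~4.12]{FV} and by the fact that the groups $G_i$ form a filtration by normal subgroups in the abelian group $G$.
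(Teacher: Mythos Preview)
Your proposal follows essentially the same inductive strategy as the paper's proof, citing the same results from \cite{FV} at the same junctures. One point to tighten: your dichotomy for the irreducible components of $\mathrm{Fix}_{Y_i'}(\Gamma)$ is slightly off. The correct description (as in \cite[pp.~326--27]{Schreieder}) is that a component $P$ with image $Z\subset Y_i$ is either (when $Z\subset\mathrm{Fix}_{Y_i}(\eta_{i+1})$) a projective \emph{sub}-bundle of $E_i'|_Z$ over a component $Z$ of $\mathrm{Fix}_{Y_i}(\langle\Gamma,\eta_{i+1}\rangle)$, or (otherwise) the strict transform of a component $Z$ of $\mathrm{Fix}_{Y_i}(\Gamma)$, which is the blow-up of $Z$ along $\mathrm{Fix}_Z(\eta_{i+1})$. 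In the first case your argument via the exceptional divisor works; in the second, however, showing that the strict transform $P\hookrightarrow Y_i'$ is distinguished is not immediate from the principles you list --- the paper invokes the argument of \cite[Prop.~3.4]{SV} (with $\operatorname{DCH}^*$ in place of $\CH^*(-)_{(0)}$) together with the triviality of the Chow groups of $P$. This is the one step where your sketch glosses over a genuine verification.
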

		\begin{proof}
			We follow word-for-word the proof of \cite[Lemma~20]{Schreieder}, which is
			by induction on $i$. The motivic statement is obtained from Schreieder's
			arguments simply by noting the following\,: 
			\begin{enumerate}[(a)]
				\item the fixed locus of $\Gamma$ on $Y_0$ is described as the product of
				fixed
				loci on $X_1$ and $X_2$\,;
				\item the irreducible components of the fixed locus of $\Gamma$
				on $Y_i'$ are described either as  projective bundles over irreducible
				components of a fixed locus on $Y_i$, or as strict transforms of irreducible
				component of some fixed locus, which are themselves blow-ups of irreducible
				components of some fixed locus along the irreducible component of some other
				fixed locus\,;
				\item the irreducible components of the fixed locus of $\Gamma$
				on $Y_i''$ are described similarly as for $Y_i'$\,;
				\item the fixed locus of $\Gamma$ on $Y_{i+1}$ is described either as the
				isomorphic image of a fixed locus on $Y_{i}''$, or its irreducible
				components
				are quotients by $\langle \eta_i \rangle$ of irreducible components of fixed
				loci on $Y_i''$.
			\end{enumerate} 
			In all aforementioned descriptions, the property that the Chow groups are
			trivial is preserved.
			We then note that the motive of a variety is a direct sum of Lefschetz
			motives if and only if its Chow groups are finite-dimensional vector spaces
			if
			and only if the total cycle class map $\CH^*(X) \to \mathrm{H}^*(X)$ is an
			isomorphism\,; see \cite{J2, kimuraFields, vialCRAS}. In particular, assuming
			$Z$ is
			a smooth projective variety with trivial Chow groups, if the
			$G_c$-invariant part of the cohomology of $Z$ is spanned by $G$-invariant
			algebraic cycles, then the $G_c$-invariant part of the motive of this variety
			$Z$ is isomorphic to a direct sum of $G$-invariant Lefschetz motives.
			Together
			with \cite[Lemma~20]{Schreieder}, this establishes the first part of the
			lemma.
			
			Concerning the moreover part, we first recall that any smooth projective
			variety $Z$ whose motive is isomorphic to a direct sum of Lefschetz motives
			satisfies condition $(\star)$ (\textit{cf.} \cite[Prop.~5.2]{FV}). In
			addition, since for any choice of marking we have
			$\operatorname{DCH}^*(Z\times
			Z) = \CH^*(Z\times Z)$, any action of a finite group $G$ on $Z$ satisfies the
			condition $(\star_G)$. 
			
			By induction, assuming that $\mathrm{Fix}_{Y_i}(\Gamma)$ and $Y_i$ have a
			marking satisfying $(\star)$ and $(\star_G)$ such that
			$\mathrm{Fix}_{Y_i}(\Gamma) \hookrightarrow Y_i$ is distinguished, it only
			remains to show that the graphs of the embeddings
			$\mathrm{Fix}_{Y'_i}(\Gamma)\hookrightarrow Y_i'$,
			$\mathrm{Fix}_{Y''_i}(\Gamma)\hookrightarrow Y_i''$
			and $\mathrm{Fix}_{Y_{i+1}}(\Gamma)\hookrightarrow Y_{i+1}$ 
			are distinguished for suitable choices of markings satisfying $(\star)$ and
			$(\star_G)$. In
			fact,
			we only need to show this component-wise for the irreducible components of
			the
			fixed loci of $\Gamma$.
			
			In case (a), which is the initial case, this is obvious (see
			\cite[Prop.~3.5]{FV}).
			
			In case (b) (and also case (c) which is similar), we have the following
			more precise description (\cite[pp. 326--27]{Schreieder}) of the irreducible
			components of the fixed locus of $\Gamma$
			on $Y_i'$. Let $P$ be an irreducible component of
			$\mathrm{Fix}_{Y_i'}(\Gamma)$, and let $Z$ be the image of $P$ inside $Y_i$.
			Then, depending on whether $Z$ is contained in
			$\mathrm{Fix}_{Y_i}(\langle\Gamma,\eta_{i+1}\rangle)$ or not, one of the
			following occurs\,:
			\begin{itemize}
				\item $Z$ is an irreducible component of
				$\mathrm{Fix}_{Y_i}(\langle\Gamma,\eta_{i+1}\rangle)$ and $P\to Z$ is a
				projective sub-bundle of the projective bundle $E_i'|_Z \to Z$, where $E_i'$
				is
				the exceptional divisor of the blow-up $Y_i' \to Y_i$.
				\item $Z$ is an irreducible component of $\mathrm{Fix}_{Y_i}(\Gamma)$ and
				$P$ is the strict transform of $Z$ in $Y_i'$\,; in particular $P \to Z$ is
				the
				blow-up along $\mathrm{Fix}_Z(\eta_i)$.
			\end{itemize}
			In the first case, we have the composition of inclusions $P\hookrightarrow
			E_i' \hookrightarrow Y_i'$. The left inclusion is distinguished because as we
			saw, both $P$ and $E_i'$ have trivial Chow groups. As for the right
			inclusion, $Y_i'$ is the blow-up of $Y_i$ along $\mathrm{Fix}_{Y_i}(\Gamma)$,
			by
			induction $Y_i$ satisfies $(\star)$ and $(\star_G)$, and
			$\mathrm{Fix}_{Y_i}(\Gamma)$ has
			trivial Chow
			groups and has a suitable marking making the inclusion
			$\mathrm{Fix}_{Y_i}(\Gamma) \hookrightarrow Y_i$ distinguished\,; therefore by
			\cite[Prop.~4.8]{FV} $E_i'$ and
			$Y_i'$ have markings that satisfy $(\star)$ and $(\star_G)$ such that $E_i'
			\hookrightarrow Y_i'$ is distinguished.
			In the second case, by arguing as in the proof of
			\cite[Prop.~3.4]{SV} (with $\CH^*(-)_{(0)}$ replaced with
			$\operatorname{DCH}^*(-)$) and using the fact that $P$ has trivial Chow
			groups,  one can show that $P\hookrightarrow Y_i'$ is distinguished.
			
			In case (d), finally, we have that $\pi: Y_i'' \to Y_{i+1}$ is a
			$\ZZ_{3}$-cyclic covering branched along the smooth divisor
			$\mathrm{Fix}_{Y_i''}(\eta_i)$. That $Y_i''$ satisfies $(\star)$ and
			$(\star_G)$, together with the fact proved above (case (c)) that
			$\mathrm{Fix}_{Y_i''}(\eta_i) \hookrightarrow Y_i''$ is distinguished, is enough
			to conclude that the quotient $Y_{i+1}$ satisfies  $(\star)$ and $(\star_G)$\,;
			see \cite[Prop.~4.12]{FV}. 
			It remains to show that $\mathrm{Fix}_{Y_{i+1}}(\eta_i)  \hookrightarrow
			Y_{i+1}$ is distinguished.
			By the projection formula, any generically finite morphism $f: Z_1\to Z_2$ of
			degree $d$ between smooth projective varieties induces a surjective morphism
			$f_* : \h(Z_1)\to \h(Z_2)$ with a section $\frac{1}{d}f^* :\h(Z_2) \to \h(Z_1)$.
			
			If in addition $Z_1$ has a marking, then $f$ is distinguished for the marking
			on $Z_2$ induced by that of $Z_1$. 
			Let $P$ be an irreducible component of $\mathrm{Fix}_{Y_{i+1}}(\Gamma)$. We
			know that there is an irreducible component $Z$ of some fixed locus in $Y_i''$
			such that $\pi$ restricts to either an isomorphism or a 3-to-1 morphism $Z\to
			P$. By induction, $Z$ has a marking such that the inclusion $Z \hookrightarrow
			Y_i''$ is distinguished. Endow $P$ with the marking induced by that of $Z$\,; in
			particular $f$ is distinguished. Then the inclusion $P\hookrightarrow Y_{i+1}$
			is distinguished because it is the composite of  $\pi$, the inclusion
			$Z\hookrightarrow Y_i''$, and $\frac{1}{\deg f}f^*$, all of which are
			distinguished.
			
			The proof of Lemma \ref{lem:schreieder} is complete.
		\end{proof}
		
		We have now established properties (ii)--(v) for $X$. With Lemma
		\ref{lem:schreieder}, we have in fact showed that $$\h(X) \simeq \h(Y_0)^{G_c}
		\oplus \bigoplus \mathds{1}(*),$$ where the right hand side summand is fixed
		by
		the action of $G$. Since the motive of $Y_0$ is of abelian type (and hence
		finite-dimensional in the sense of Kimura), in order to establish (i), it thus
		suffices to see that the $G_c$-invariant cohomology of $Y_0$ is spanned by
		$G$-invariant algebraic classes, by $g$ linearly independent $(a,b)$-forms and
		their conjugates. This follows from conditions (i) and (ii) for $(X_1,\phi_1)$
		and for $(X_2,\phi_2)$, as in \cite[pp. 329--30]{Schreieder}.
		
		The proof of Proposition \ref{prop:schreieder} is now complete.
	\end{proof}

	\begin{remark}\label{diff} As mentioned before (Remark \ref{diff0}), the
		Cynk--Hulek variety $X_{CH}$ (given by Theorem \ref{ch3}) and the Schreieder
		variety $X_S$ (given by the $c=1, b=0$ case of Theorem \ref{schreieder}) are
		not necessarily
		the same. The difference in their construction is clear from comparison of
		subsections \ref{sec:3} and \ref{sec:3c}\,: in the construction of $X_{CH}$,
		there
		is at each step the blow-down $b\colon Z\to X$ (in order to have a {\em
			crepant\/} resolution), whereas in the construction of $X_S$ the varieties
		$Z$
		and $X$ coincide.
	\end{remark}

	\subsection{Proof of the main Theorem \ref{t:bv}}
	In view of Remark \ref{rmk:reduction}, 
	Theorem \ref{t:bv} follows from the following two claims\,:	
	
	\begin{claim}\label{key}
		Let $X$ be the $n$-dimensional Calabi--Yau variety of Theorem \ref{ch} or
		\ref{ch3}, or a Schreieder variety as in Theorem \ref{schreieder}. Then
		$X$ admits a marking $\phi : \h(X) \stackrel{\simeq}{\longrightarrow} M$ that
		satisfies $(\star)$.
	\end{claim}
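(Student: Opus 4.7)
The plan is to prove Claim \ref{key} by induction on the number of (hyper)elliptic curve factors, handling the three families separately via the three inductive constructions already at our disposal: Proposition \ref{p:inductionZ2} for the Cynk--Hulek varieties of Theorem \ref{ch}, Proposition \ref{p:inductionZ3} for those of Theorem \ref{ch3}, and Proposition \ref{prop:schreieder} for the Schreieder varieties. Each of these propositions is formulated so that its conclusion reproduces the hypotheses of the next iteration, so the real content lies in setting up the base case and checking that the iterated construction genuinely reproduces the variety $X$ in the statement.

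For the base cases, I would appeal uniformly to Proposition \ref{prop:markinghyperelliptic}, which endows each starting (hyper)elliptic curve --- the elliptic curve $E$ with its $[-1]$-involution, the elliptic curve $E$ with an order-$3$ automorphism, and the hyperelliptic curve $C_{g,1}$ (with $g = (3^c-1)/2$) equipped with the $\mu_{3^c}$-action --- with a marking satisfying $(\star)$ and $(\star_H)$, together with the property that the inclusion of any fixed point is distinguished. In dimension~$1$, the fixed locus is a finite set of pairwise rationally-equivalent points (Lemma \ref{lem:fixedpoints}), hence a smooth ``divisor'' with trivial Chow groups; this is enough to initiate Propositions \ref{p:inductionZ2} and \ref{p:inductionZ3}. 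For the Schreieder case, one needs to verify in addition that $(C,\phi)$ belongs to the family $S_c^{1,0}$ of \S\ref{sec:3c}: conditions (ii)--(v) of the definition follow directly from Proposition \ref{prop:markinghyperelliptic} and Lemma \ref{lem:fixedpoints}, while the motivic condition (i) reduces to the classical description of $H^{1,0}(C)$ as the span of the forms $\omega_i = x^{i-1} y^{-1} \, dx$ for $1 \leq i \leq g$, on which $\phi$ acts with the $g$ non-trivial eigenvalues $\zeta, \zeta^2, \ldots, \zeta^g$.

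With the base cases in place, the induction runs mechanically. For Theorem \ref{ch}, I apply Proposition \ref{p:inductionZ2} repeatedly, with $(X_1, H_1)$ the pair produced at the previous step and $(X_2, H_2) = (E_k, \ZZ_2)$ the next elliptic curve. For Theorem \ref{ch3}, I do the same with Proposition \ref{p:inductionZ3}, keeping $X_1$ an elliptic curve and letting $X_2$ absorb the variety built so far. For the Schreieder varieties, I iterate Proposition \ref{prop:schreieder}, choosing the pair of bidegrees at each step so as to reach the prescribed final $(a,b)$. In each case the resulting variety coincides with the original Cynk--Hulek or Schreieder variety --- this is precisely how these varieties were constructed in the cited papers --- and it inherits a marking $\phi$ satisfying $(\star)$, as required.

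The main obstacle is one of bookkeeping rather than a logical hurdle. In particular, one must check at the outset of the $\ZZ_3$ induction that the asymmetry between $X_1$ (with purely divisorial fixed locus) and $X_2$ (with both divisor and codimension-$2$ components of the fixed locus) in the hypotheses of Proposition \ref{p:inductionZ3} can be accommodated in the very first iteration, where both factors are elliptic curves with purely divisorial fixed loci. This is the degenerate sub-case $B_{2,2} = \emptyset$ of the hypothesis, and an inspection of the argument of Proposition \ref{p:inductionZ3} shows that it causes no difficulty.
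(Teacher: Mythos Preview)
Your proposal is correct and follows essentially the same approach as the paper: the proof in the paper simply invokes Propositions \ref{p:inductionZ2}, \ref{p:inductionZ3}, and \ref{prop:schreieder} iteratively, adding one (hyper)elliptic curve at each step, exactly as you outline. Your treatment is in fact more explicit than the paper's, which leaves the base cases and the bookkeeping you mention (the degenerate case $B_{2,2}=\emptyset$, the verification that $(C,\phi)\in S_c^{1,0}$) entirely implicit.
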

	
	\begin{claim}\label{key2} Let $X$ be the $n$-dimensional Calabi--Yau variety of
		Theorem \ref{ch} or \ref{ch3}, or a Schreieder variety as in Theorem
		\ref{schreieder}. Then there is a decomposition
		\begin{equation}\label{deco} \h(X)= T\oplus \bigoplus {\mathds{1}}(\ast)\ \ \
		\hbox{in}\ \MM_{\rm rat}\
		,\end{equation}
		where $T$ is such that $H^j(T) = 0$ for $j\neq n$, and $T$ is isomorphic to a
		direct summand of the Chow motive of $E_1\times\cdots\times E_n$ (if $X$ is as
		in Theorem \ref{ch}), resp. of $E^n$ (for $X$ as in Theorem \ref{ch3}), resp. of
		$C^n$ (for $X$ as in Theorem \ref{schreieder}).		
	\end{claim}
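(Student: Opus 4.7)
The strategy is to identify $\h(X)$ with the $G$-invariant submotive of $\h(C_1 \times \cdots \times C_n)$ (where $C_1 \times \cdots \times C_n$ is the product of curves of which $\bar X$ is the singular quotient) up to a direct sum of Lefschetz motives. Once this identification is achieved, Corollary \ref{C:CH} furnishes the required decomposition $\h(X) = T \oplus \bigoplus \mathds{1}(*)$ with $T$ of the stated form.

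We proceed by induction on $n$, following the inductive constructions of $X$ in Propositions \ref{p:inductionZ2}, \ref{p:inductionZ3}, and \ref{prop:schreieder}. The base case $n=1$ is immediate: $X$ is a smooth projective curve, $\h(X) = \mathds{1} \oplus \h^1(X) \oplus \mathds{1}(-1)$, and we set $T := \h^1(X)$. For the inductive step, each elementary operation appearing in the construction has a controlled motivic effect. A blow-up along a smooth center $Y$ with trivial Chow groups adds summands $\h(Y)(-i)$, all of which are sums of Lefschetz motives. A quotient by a finite group $H$ acting equivariantly replaces the motive by its $H$-invariant submotive, and the $H$-invariant part of a sum of Lefschetz motives is again a sum of Lefschetz motives. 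A blow-down, which appears only in the $\ZZ_3$ Cynk--Hulek case, inverts the blow-up formula and thereby removes Lefschetz summands. Propagating these steps through the whole construction yields an isomorphism $\h(X) \cong \h(C_1 \times \cdots \times C_n)^G \oplus \bigoplus \mathds{1}(*)$, and Corollary \ref{C:CH} concludes. For the Schreieder case there is a more direct route: condition (i) of the class $S_c^{a,b}$ (established in the course of proving Proposition \ref{prop:schreieder}) already supplies a decomposition $\h(X) = T' \oplus \bigoplus \mathds{1}(*)$; the identification $T' \cong T$ as a summand of $\h(C^n)$ then follows from finite-dimensionality of the motive of $C^n$ and a comparison of Hodge numbers.

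The main obstacle is the control of cross-terms that arise when expanding $\h(X_1) \otimes \h(X_2) = (T_1 \oplus L_1) \otimes (T_2 \oplus L_2)$: the mixed summands $T_i \otimes L_{3-i}$ are Tate twists of $T_i$ and, if they survived the group quotient, would contribute non-Lefschetz motives outside the middle degree, contradicting the expected shape of $\h(X)$. The necessary cancellation is essentially the content of Proposition \ref{P:quotient}: the non-trivial eigenvalues of the group action on $\h^1$ of each curve factor (the involution $[-1]$ in the $\ZZ_2$ case, the order-$3$ automorphism $\nu$ in the $\ZZ_3$ case, and the multiplication by $\zeta$ in the Schreieder case), together with the defining linear relation cutting out $G$ inside $H^n$, ensure that the averaging operator $\tfrac{1}{|G|}\sum_{g\in G} \Gamma_g$ annihilates every such mixed summand. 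Carrying out this cancellation at each intermediate stage of the blow-up--quotient--blow-down tower, rather than only for the bare product of curves, is the bookkeeping that must be executed.
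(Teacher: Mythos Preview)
Your proposal is correct and follows the same inductive strategy as the paper: track the motive through the blow-up/quotient tower, observe that each step alters $\h$ only by Lefschetz summands, and invoke Corollary~\ref{C:CH} for the invariant motive of the curve product. The cross-term cancellation you isolate is exactly what the paper packages as an extra inductive hypothesis (vi), namely that $\h(X_i)^{H_i}$ is a sum of Lefschetz motives and $T_i^{H_i}=0$; this is the clean way to say that the averaging operator kills $T_1\otimes L_2$ and $L_1\otimes T_2$ at every stage, not just for the bare curve product.

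The one substantive divergence is in the Theorem~\ref{ch3} case. You propose to push the induction through Proposition~\ref{p:inductionZ3}, including the blow-down $b\colon Z\to X$, arguing that since $Z$ is the blow-up of $X$ along a center with trivial Chow groups, $\h(X)$ differs from $\h(Z)$ only by Lefschetz summands. This is valid. The paper instead takes a shortcut: the Cynk--Hulek variety $X_{CH}$ of Theorem~\ref{ch3} is dominated by the Schreieder variety $X_S$ with $c=1$, $b=0$ (they resolve the same quotient $E^n/G$, and $X_S$ maps onto $X_{CH}$ via the extra blow-down), so Claim~\ref{key2} for $X_S$ transfers directly to $X_{CH}$. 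Your route is self-contained but requires the blow-down bookkeeping; the paper's route avoids it at the cost of invoking the Schreieder case first. Both work.

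For the Schreieder varieties, your ``more direct route'' is precisely what the paper does: condition~(i) in the definition of $S_c^{a,b}$ already gives the decomposition, and the proof of Proposition~\ref{prop:schreieder} exhibits $T$ as a summand of $\h(Y_0)^{G_c}\subset \h(C^n)$. Your appeal to finite-dimensionality plus Hodge-number comparison is unnecessary here---the inclusion into $\h(C^n)$ is explicit from the construction.
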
	
	
	\begin{proof}[Proof of Claim \ref{key}]
		The Cynk--Hulek varieties of Theorem \ref{ch} (resp. Theorem \ref{ch3}) are
		constructed inductively using
		the process of Proposition \ref{p:inductionZ2} (resp. Proposition
		\ref{p:inductionZ3}), by adding an elliptic curve with
		$[-1]$-involution (resp. an elliptic curve with non-trivial $\ZZ_3$-action),
		at
		each step. Repeatedly applying Proposition \ref{p:inductionZ2} (resp.
		Proposition \ref{p:inductionZ3}), 
		we find that they admit a marking satisfying $(\star)$.

		Likewise, the Schreieder varieties are obtained inductively using Proposition
		\ref{prop:schreieder}, by adding the hyperelliptic curve $C_{g,1}$ at each
		step. A repeated application of Proposition
		\ref{prop:schreieder} establishes Claim \ref{key} for the Schreieder
		varieties.  
	\end{proof}
	
	\begin{proof}[Proof of Claim \ref{key2}]
		The Cynk--Hulek varieties of Theorem \ref{ch}  are
		constructed inductively using
		the process of Proposition \ref{p:inductionZ2},  by adding at each step an
		elliptic curve $E$ with
		$[-1]$-involution. Note that the quotient $E/[-1]$ is isomorphic to
		$\mathbb{P}^1$ and hence has Chow motive isomorphic to $\mathds{1} \oplus
		\mathds{1}(-1)$. In particular, the Chow motive of $E$ is isomorphic to $T
		\oplus (\mathds{1} \oplus \mathds{1}(-1))$, where $[-1]$ acts trivially on the
		right-hand side summand. Therefore, in order to prove Claim \ref{key2} for the
		Cynk--Hulek varieties of Theorem \ref{ch}, it is enough to remark that
		Proposition~\ref{p:inductionZ2} continues to hold if one adds the property 
		\begin{enumerate}[(i)]
			\item[\emph{(vi)}] the decomposition $\h(X_i) = T_i\oplus \h(X_i)^{H_i}$ is
			such that $ \h(X_i)^{H_i}	\simeq \bigoplus \mathds{1}(*)$, and $H^j(T_i) = 0$ if
			$j\neq \dim X_i$.
		\end{enumerate}
		Recall that $X$ is the quotient by $G \simeq \ZZ_{2}$ of the blow-up $Z$ of
		$X_1\times X_2$ along $B_1\times B_2$, where $B_i$ is the fixed locus of $H_i$
		acting on $X_i$ and is assumed to have motive isomorphic to a direct sum of
		Lefschetz motives. By the blow-up formula, we have 
		$$\h(Z) \simeq \h(X_1)\otimes \h(X_2) \oplus \big(\h(B_1)\otimes\h(B_2) \oplus
		\h(B_1)\otimes\h(B_2)(-1)\big).$$
		The right-hand side summand is fixed under the action of $H_1\times H_2$ and is
		isomorphic to a direct sum of Lefschetz motives. Thus in order to conclude it is
		enough to note that 
		$(T_1\otimes \h(X_2)^{H_2})^G=0$ (and similarly $\h(X_1)^{H_1}\otimes
		T_2)^{G}=0$) and
		the $(H_1\times H_2)$-invariant part of the motive $\h(X_1)\otimes \h(X_2)$ is
		a direct sum of Lefschetz motives\,; this follows at once from the assumption
		that $T_i^{H_i} = 0$.  
		
		The proof of Claim \ref{key2} for the Schreieder varieties was already taken
		care of. Indeed, thanks to Proposition \ref{prop:schreieder} we know
		that Schreieder varieties $X$ are in the class $S_c^{a,b}$; this entails in
		particular (by definition of $S_c^{a,b}$) that
		the motive of $X$ decomposes as
		\[ \h(X) =T \oplus  \bigoplus \mathds{1}(*)    \ \ \ \hbox{in}\ \MM_{\rm
			rat}\ ,\]
		where $T$ is such that $H^j(T)=0$ for all $j\not=\dim X$. In fact, at the end
		of the proof of Proposition \ref{prop:schreieder}), we
		constructed $T$ as a direct summand of $ \h(Y_0)^{G_c} $,
		and so $T$ is indeed a direct summand of the Chow motive of $C^n$.		
		
		Finally, to prove Claim \ref{key2} for the Cynk--Hulek varieties $X_{CH}$ of
		Theorem \ref{ch3} we argue as follows\,: the variety $X_{CH}$ is dominated by
		the
		Schreieder variety $X_S$ (of the same dimension, and where $c=1$ and $b=0$ in
		the Schreieder construction). Thus, the truth of Claim \ref{key2} for $X_S$
		implies the truth of Claim \ref{key2} for $X_{CH}$.
	\end{proof}

	\subsection{Proof of Theorem \ref{t:mck}} This is immediate\,:
	in view of Proposition \ref{prop:multmarking}, Theorem \ref{t:mck} follows from
	Claim~\ref{key}.\qed

	\begin{remark}
		Alternatively, we could have established the existence of a self-dual
		multiplicative Chow--K\"unneth decomposition for $X$ as in Theorem \ref{t:mck}
		directly (\emph{i.e.}, without invoking Proposition~\ref{prop:multmarking}
		(\cite[Prop.~6.1]{FV}) by using the
		results of \cite{SV} instead of those of \cite{FV}.
	\end{remark}

	\subsection{Proof of Corollary \ref{cor:moddiag}} By Claim \ref{key}, we know
	that any Schreieder surface $S$ has a marking that satisfies $(\star)$. Since
	the cycle $$(x,x,x) - (x,x,p) - (x,p,x) - (p,x,x) +(p,p,x) + (p,x,p) + (x,x,p)$$
	is numerically trivial (this cycle is numerically trivial for any regular
	surface), it suffices to show that there exists a point $p$ in $S$ such that
	each summand is distinguished. By definition of $(\star_{\mathrm{Mult}})$, the
	cycle $(x,x,x)$ is distinguished. By \cite[Lemma~3.8]{FV}, the diagonal
	$\Delta_S = (x,x)$ is distinguished in $S\times S$\,; and it is clear that the
	fundamental class of $S$ is distinguished in $S$ (see also
	\cite[Remark~3.4]{FV}). Therefore it suffices to exhibit a point $p$ that is
	distinguished in $S$. The variety $S$ is constructed using the diagram
	\eqref{eq:diagschrei} starting from $Y_0$ the product of two hyperelliptic
	curves. Choose a fixed point $p_0$ of $G$ in $Y_0$. By Proposition
	\ref{prop:markinghyperelliptic} and the fact that $(\star)$ is stable under
	product, $Y_0$ has a marking that satisfies $(\star)$ such that $p_0$ is
	distinguished. Now by inspection of the proof of Proposition
	\ref{prop:schreieder}, we see, by defining inductively $p_i$ as the image of
	$p_{i-1}''$ under $Y_{i-1}'' \to Y_i$, $p_i'$ as a point in the pre-image of
	$p_i$ under $Y_i' \to Y_i$, and $p_i''$  as a point in the pre-image of $p_i'$
	under $Y_i'' \to Y_i'$, that $p_c = p$ is a distinguished point in $S = Y_c$.
	
	Alternately, Corollary \ref{cor:moddiag} is a consequence of Theorem
	\ref{t:mck} combined with \cite[Proposition~8.14]{SVfourier} and with the fact
	that there exists a point $p$ in $S$ that is distinguished.\qed

	\subsection{Final remarks}
	We remark that Theorems \ref{t:bv} and \ref{t:mck} hold also in the following
	two situations. First in the Schreieder construction, we may add via Proposition
	\ref{prop:schreieder} at each step, instead of the hyperelliptic curve
	$C_{g,1}$, more generally the hyperelliptic curve $C_{g,D}$ for any non-zero
	rational number $D$. Second in the construction of a smooth model of the
	Cynk--Hulek varieties of Theorem \ref{ch}, one may add via Proposition
	\ref{p:inductionZ2} at each step a hyperelliptic curve equipped with its
	hyperelliptic involution instead of an elliptic curve equipped with its
	$[-1]$-involution.

	\section{Applications}\label{sec:applications}
	
	\subsection{Voisin's conjecture}
	\label{svois}
	
	Voisin \cite{V9} has formulated the following intriguing conjecture, which is a
	special instance of the
	Bloch--Beilinson conjectures.
	
	\begin{conjecture}[Voisin \cite{V9}]\label{conj} Let $X$ be a smooth projective
		variety of dimension $n$, with $p_g(X):= h^{n,0}(X) = 1$ and $h^{j,0}(X)=0$
		for
		$0<j<n$. Then any two zero-cycles $a,a^\prime\in \CH^n_{num}(X)$ satisfy
		\[ a\times a^\prime = (-1)^n a^\prime\times a\ \ \ \hbox{in}\ \CH^{2n}(X\times
		X)\ .\]
		(Here, $a\times a^\prime$ is the exterior product $(p_1)^\ast(a)\cdot
		(p_2)^\ast(a^\prime)\in \CH^{2n}(X\times X)$, where $p_j$ is projection to the
		$j$-th factor.)
	\end{conjecture}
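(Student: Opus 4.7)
The plan is to prove Voisin's conjecture in the Cynk--Hulek case stated as Theorem~\ref{conjV}, by using Claim~\ref{key2} to transfer a skew-commutativity identity from a product of elliptic curves $A$ to $X$. By Claim~\ref{key2}, $\h(X) = T \oplus \bigoplus \mathds{1}(\ast)$, with $T$ a direct summand of $\h(A)$, where $A := E_1\times\cdots\times E_n$ in the setting of Theorem~\ref{ch} and $A := E^n$ in that of Theorem~\ref{ch3}. The Lefschetz summands contribute to $\CH^n(X)$ only through the single copy of $\mathds{1}(-n)$ spanning the (numerically nontrivial) class of a point, and $H^{2n}(T)=0$, so projection onto $T$ identifies $\CH^n_{num}(X)$ with $\CH^n(T)$. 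Inspecting the explicit realization of $T$ in the proof of Corollary~\ref{C:CH}, namely via the projector $\pi^1_{E_1}\otimes\cdots\otimes\pi^1_{E_n}$ (averaged over $G$ in the $\ZZ_3$-case), each $a\in\CH^n(T)$ lifts to a $0$-cycle $\tilde a\in\CH^n(A)$ lying in the image of the external-product map
$$\Pic^0(E_1)\otimes\cdots\otimes \Pic^0(E_n) \longrightarrow \CH^n(A),$$
or, in the $\ZZ_3$-case, in the $G$-invariant part of that image.

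The heart of the argument is the following identity on an elliptic curve $E$: for $b, b'\in\Pic^0(E)$, one has
$$b\times b' + b'\times b = 0 \quad \text{in}\ \CH^2(E\times E).$$
This follows from the theorem of the cube, which gives $\mu^\ast b = p_1^\ast b + p_2^\ast b$ for the addition map $\mu\colon E\times E\to E$; expanding
$$\mu^\ast(b\cdot b') = \bigl(p_1^\ast b + p_2^\ast b\bigr)\bigl(p_1^\ast b' + p_2^\ast b'\bigr) = p_1^\ast(b\cdot b') + b\times b' + b'\times b + p_2^\ast(b\cdot b')$$
and using that $b\cdot b' \in \CH^2(E) = 0$ since $\dim E = 1$ yields the claim.

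To bootstrap, I would take generators $\tilde a = b_1\boxtimes\cdots\boxtimes b_n$ and $\tilde a' = b_1'\boxtimes\cdots\boxtimes b_n'$ with $b_i, b_i'\in\Pic^0(E_i)$, and rearrange $A\times A \cong \prod_i (E_i\times E_i)$. Up to a reordering sign common to both sides, $\tilde a\times\tilde a'$ then becomes the external product over $i$ of the $b_i\times b_i'$; applying the elliptic identity in each of the $n$ factors produces a total sign of $(-1)^n$ relating $\tilde a\times\tilde a'$ to $\tilde a'\times \tilde a$ in $\CH^{2n}(A\times A)$. Linearity extends the identity to all of $\CH^n(T)$, and transferring back to $X$ via Claim~\ref{key2} completes the proof. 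I expect the main obstacle to be verifying in the first step that $\CH^n(T)$ is indeed generated by such external products of $\Pic^0(E_i)$ (or by the $G$-invariant part thereof); this rests on the explicit form of the projector cutting out $T$ in Corollary~\ref{C:CH}, together with a Beauville-type analysis of $\CH_0$ of products of elliptic curves.
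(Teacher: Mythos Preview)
Your argument is correct, and it lands in the same place as the paper's proof of Theorem~\ref{conjV}: reduce to a skew-commutativity statement for zero-cycles on the product $A=E_1\times\cdots\times E_n$. The two proofs differ only in the packaging.

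For the reduction step, the paper takes the shorter route: since $\CH_0$ is a birational invariant among global finite quotients, one passes directly from $X$ to $\bar X$ and invokes Corollary~\ref{C:CH} to get $\CH^n_{num}(\bar X)\subseteq p_*\CH^n(A)_{(n)}$. Your route via Claim~\ref{key2} works too, but it requires identifying the summand $T$ of $\h(X)$ with the explicit summand of $\h(A)$ from Corollary~\ref{C:CH}; this holds (by finite-dimensionality, or by unwinding the inductive proof of Claim~\ref{key2}) but is an extra step. The ``obstacle'' you flag is not one: applying $\pi^1_{E_1}\otimes\cdots\otimes\pi^1_{E_n}$ to a point $[P_1,\ldots,P_n]$ gives $([P_1]-[0])\times\cdots\times([P_n]-[0])$, so $\CH^n(T)$ is visibly generated by external products of elements of $\Pic^0(E_i)$; no further Beauville-type analysis is needed.

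For the key identity on $A$, the paper simply quotes Voisin's general result (Proposition~\ref{abvar}) for $\CH^n(B)_{(n)}$ on any abelian variety~$B$. Your direct argument via $\mu^*b=p_1^*b+p_2^*b$ and $b\cdot b'=0$ in $\CH^2(E)$ is a clean, self-contained proof of the special case actually needed here (products of elliptic curves), and the bootstrap to $A\times A\cong\prod_i(E_i\times E_i)$ is correct---there is no ``reordering sign'', since you are just transporting along an isomorphism of varieties. So your approach trades a black-box citation for an elementary computation; the paper's approach is shorter but less self-contained.
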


	For background and motivation for Conjecture \ref{conj}, \emph{cf.}
	\cite[Section 4.3.5.2]{Vo}.
	Conjecture \ref{conj} has been proven in some scattered special cases
	\cite{V9},
	\cite{moi}, \cite{desult}, \cite{tod}, \cite{BLP}, but is still widely open for
	a general K3 surface.
	
	\begin{remark}
		Conjecture \ref{conj} can be thought of as a version of Bloch's conjecture for
		motives.
		Indeed, given $X$ as in Conjecture \ref{conj}, consider the Chow motive $M$
		defined as
		\[ M:= \begin{cases}  \wedge^2 \h^n(X) :=  (X\times X, {1\over
			2}{\displaystyle \sum_{\sigma\in\Sy_2}}\hbox{sgn}(\sigma) \Gamma_\sigma\circ
		(\pi^n_X\times\pi^n_X),0)      &\hbox{if\ $n$\ is\ even}\ ,\\
		\hbox{Sym}^2 \h^n(X) :=    (X\times X, {1\over
			2}{\displaystyle\sum_{\sigma\in\Sy_2}}  \Gamma_\sigma\circ
		(\pi^n_X\times\pi^n_X),0)           &\hbox{if\ $n$\ is\ odd}\ .\\
		\end{cases}\]
		(Here, for $\h^n(X)$ to make sense, we need to assume that $X$ has a
		Chow--K\"unneth decomposition, in the sense of Definition \ref{ck}).
		The condition on $p_g(X)$ implies that $h^{2n,0}(M)= 0$, and so $M$ is a
		motive
		with
		\[ h^{j,0}(M)=0\ \ \ \forall\ j\ .\]
		A motivic version of Bloch's conjecture would then imply that
		\[ \CH_0^{}(M)=0\ .\]
		On the other hand, the condition on $h^{j,0}(X)$ conjecturally implies that
		$\CH_{num}^n(X)=(\pi^n_X)_\ast \CH^n(X)$. It follows that given two
		zero-cycles 
		$a,a^\prime\in \CH^n_{num}(X)$, one conjecturally has
		\[   a\times a^\prime - (-1)^n a^\prime\times a = (\pi^n_X\times \pi^n_X)_\ast
		(a\times a^\prime) -   (-1)^n \iota_\ast  (\pi^n_X\times \pi^n_X)_\ast
		(a\times
		a^\prime)  \ \ \mbox{in}\ \CH_0^{}(M)=0,\]
		where $\iota$ is the non-trivial element of $\Sy_2$.
		This heuristically explains Conjecture \ref{conj}.
	\end{remark}
	
	We now prove Voisin's conjecture for Cynk--Hulek Calabi--Yau varieties\,: 
	
	\begin{theorem}\label{conjV} Let $X$ be a Calabi--Yau variety of dimension $n$
		as in Theorem \ref{ch} or \ref{ch3}. Then Conjecture \ref{conj} is true for
		$X$:
		any $a,a^\prime\in \CH^n_{num}(X)$ satisfy
		\[ a\times a^\prime = (-1)^n \, a^\prime\times a\ \ \ \hbox{in}\
		\CH^{2n}(X\times X)\ .\]
	\end{theorem}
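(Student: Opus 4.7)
The plan is to use Claim~\ref{key2} to reduce the assertion to a computation on a product of elliptic curves, and then further to the case of a single elliptic curve, where a direct ring-theoretic argument suffices.

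First I would invoke Claim~\ref{key2} to write $\h(X)=T\oplus \bigoplus \mathds{1}(\ast)$ in $\MM_{\rm rat}$, where $T$ is a direct summand of the Chow motive of $A:=E_1\times\cdots\times E_n$ (in the setting of Theorem~\ref{ch}) or of $E^n$ (in the setting of Theorem~\ref{ch3}). The Lefschetz summands contribute only non-numerically-trivial classes to $\CH^n(X)$, so $\CH^n_{num}(X)$ is contained in $\CH^n(T)$. By Corollary~\ref{C:CH} the motive $T$ is cut out of $\h(A)$ by (a group average of) the projector $\pi^1_{E_1}\otimes\cdots\otimes \pi^1_{E_n}$, and consequently $\CH^n(T)$ is $\QQ$-spanned by exterior products $a_1\times\cdots\times a_n$ with $a_i\in \Pic^0(E_i)_\QQ=\CH^1(E_i)_{(1)}$. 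Since $T\otimes T$ is a common direct summand of $\h(X\times X)$ and $\h(A\times A)$, and the cycle $a\times a'-(-1)^n a'\times a$ lies in the Chow group of this summand, the desired identity in $\CH^{2n}(X\times X)$ is equivalent to the same identity in $\CH^{2n}(A\times A)$.

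By $\QQ$-bilinearity I may then assume $a=a_1\times\cdots\times a_n$ and $a'=a'_1\times\cdots\times a'_n$ with $a_i,a'_i\in \Pic^0(E_i)_\QQ$. Reordering coordinates gives an isomorphism $A\times A\cong E_1^2\times\cdots\times E_n^2$ under which $a\times a'$ becomes $(a_1\times a'_1)\times\cdots\times (a_n\times a'_n)$ and $a'\times a$ becomes $(a'_1\times a_1)\times\cdots\times (a'_n\times a_n)$. It therefore suffices to establish the base case $n=1$: for any elliptic curve $E$ and any $a,a'\in \Pic^0(E)_\QQ$,
\[ a\times a'+a'\times a=0\ \ \ \hbox{in}\ \CH^2(E\times E)\ .\]
Granting this, each of the $n$ factors contributes a sign, and multiplying them together produces $(-1)^n$.

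For the base case I would use the addition map $\mu\colon E\times E\to E$. Its pullback on $\Pic^0$ is the diagonal embedding into $\Pic^0(E\times E)\cong \Pic^0(E)^{\oplus 2}$, so that $\mu^{\ast}a=p_1^{\ast}a+p_2^{\ast}a$ for any $a$ of degree zero. Since $\mu^{\ast}$ is a ring homomorphism and $a\cdot a'\in \CH^2(E)=0$, expanding
\[ 0=\mu^{\ast}(a\cdot a')=(\mu^{\ast}a)\cdot(\mu^{\ast}a')=(p_1^{\ast}a+p_2^{\ast}a)(p_1^{\ast}a'+p_2^{\ast}a')\]
kills the ``diagonal'' summands $p_i^{\ast}(a\cdot a')$ and leaves exactly $a\times a'+a'\times a=0$. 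The only genuine obstacle is the motivic bookkeeping in the first step: one must verify that the exterior product on $\CH^n(T)$, as a Chow group of a common sub-motive of $\h(X)$ and $\h(A)$, is computed identically in the two ambient varieties, so that vanishing on $A\times A$ forces vanishing on $X\times X$. Once this is in place, the reduction and the one-line computation via $\mu$ complete the proof.
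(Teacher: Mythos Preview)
Your argument is correct. Both your proof and the paper's reduce to the abelian variety $A=E_1\times\cdots\times E_n$, but by different mechanisms and with different endgames. The paper avoids Claim~\ref{key2} and your motivic bookkeeping step entirely: since $f\colon X\to\bar X$ is a resolution, birational invariance of $0$-cycles gives $\CH^n(X)\cong\CH^n(\bar X)$, and then Corollary~\ref{C:CH} places $\CH^n_{num}(\bar X)$ inside $p_*\CH^n(A)_{(n)}$; this replaces the abstract identification of $T\otimes T$ as a common summand of $\h(X\times X)$ and $\h(A\times A)$ with an honest pushforward of cycles along the actual morphisms $p$ and $f$. For the conclusion, the paper simply quotes Voisin's general result (Proposition~\ref{abvar}, \cite[Example~4.40]{Vo}) that $a\times a'=(-1)^n a'\times a$ for all $a,a'\in\CH^n(B)_{(n)}$ on any $n$-dimensional abelian variety~$B$, whereas you supply a self-contained proof in the special case $B=E_1\times\cdots\times E_n$ by factoring through $E_i^2$ and using the addition map. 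Your route is more elementary and makes the antisymmetry transparent; the paper's is cleaner on the reduction step and applies to arbitrary abelian varieties. One small imprecision worth tightening: Corollary~\ref{C:CH} computes the motive of $\bar X$, not of $X$, so it does not literally identify the projector cutting out the $T$ of Claim~\ref{key2}. What you actually use is that $H^j(T)=0$ for $j\neq n$ together with Kimura finite-dimensionality forces $T$ to be a summand of $\h^n(A)$, and that $\CH^n(A)_{(n)}$ is spanned by exterior products because $(\pi^1_{E_1}\otimes\cdots\otimes\pi^1_{E_n})$ applied to a point $(x_1,\ldots,x_n)$ yields $([x_1]-[0])\times\cdots\times([x_n]-[0])$.
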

	\begin{proof} Consider morphisms
		$$\xymatrix{
			& E_1\times\cdots\times E_n \ar[d]^p \\
			X \ar[r]^f & \bar{X}
		}$$
		as in Theorem \ref{ch} or \ref{ch3}. 
		The Chow group of $0$-cycles is a birational invariant amongst varieties that
		are global quotients (this follows for instance from \cite[Example
		17.4.10]{F}),
		and so
		$ f^\ast\colon \CH^n(\bar{X}) \to \CH^n(X)\ $  is an isomorphism.
		Consequently, it suffices to prove Conjecture~\ref{conj} for $\bar{X}$. 
		Using Corollary \ref{C:CH},
		we see
		that $\CH^n_{num}(\bar X)$ is contained in $p_*\CH^n(E_1\times \cdots
		E_n)_{(n)}$.
		Therefore, we are reduced  to proving that $a\times a^\prime = (-1)^n
		a^\prime\times a$ for all $a, a' \in \CH^n(E_1\times \cdots \times
		E_n)_{(n)}$\,; this is a special case of\,:
		\begin{proposition}[Voisin, Example 4.40 in \cite{Vo}]\label{abvar} 
			Let $B$ be an abelian variety of dimension $n$. Let $a,a^\prime\in
			\CH^n(B)_{(n)}$. Then
			\[a\times a^\prime=(-1)^n \, a^\prime\times a\ \ \ \hbox{in}\
			\CH^{2n}(B\times B) .\]
		\end{proposition}
		This concludes the proof of the theorem.
	\end{proof}

	\subsection{Voevodsky's conjecture}
	\label{svoe}
	
	In this paragraph, we give an application of our results to Voevodsky's
	conjecture on smash-equivalence.

	\begin{definition}[Voevodsky \cite{Voe}]\label{sm} Let $X$ be a smooth
		projective variety. A cycle $a\in \CH^i(X)$ is called {\em smash-nilpotent\/} 
		if there exists $m\in\NN$ such that
		\[ \begin{array}[c]{ccc}  a^m:= &\underbrace{a\times\cdots\times a}_{(m\hbox{
				times})}&=0\ \ \hbox{in}\ 
		\CH^{mi}(X\times\cdots\times
		X)_{}\ .
		\end{array}\]
		\vskip0.6cm
		
		Two cycles $a,a^\prime$ are called {\em smash-equivalent\/} if their
		difference
		$a-a^\prime$ is smash-nilpotent. We will write $\CH^i_\otimes(X)\subseteq
		\CH^i(X)$ for the subgroup of smash-nilpotent cycles.
	\end{definition}
	
	\begin{conjecture}[Voevodsky \cite{Voe}]\label{voe} Let $X$ be a smooth
		projective variety. Then
		\[  \CH^i_{num}(X)\ \subseteq\ \CH^i_\otimes(X)\ \ \ \hbox{for\ all\ }i\ .\]
	\end{conjecture}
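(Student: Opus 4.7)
The plan is to use Claim~\ref{key2} to reduce to showing that every cycle on the motive $T$ appearing there is smash-nilpotent, and then to exploit the odd-dimensionality to place $T$ inside a \emph{negative} Kimura--O'Sullivan motive, on which this conclusion is automatic.

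First, by Claim~\ref{key2} there is a decomposition $\h(X) = T \oplus \bigoplus_{k}\mathds{1}(a_k)$ in $\MM_{\rm rat}$, where, via Corollary~\ref{C:CH}, the summand $T$ is a direct summand of $\h^1(C_1)\otimes\cdots\otimes\h^1(C_n)$ for the relevant (hyper)elliptic curves $C_i$. Both $\CH^\ast_{\mathrm{num}}(-)$ and $\CH^\ast_\otimes(-)$ are additive on direct sums of Chow motives, and $\CH^\ast_{\mathrm{num}}$ vanishes tautologically on each Lefschetz summand $\mathds{1}(a_k)$. Thus the proposition reduces to proving $\CH^\ast_{\mathrm{num}}(T)\subseteq \CH^\ast_\otimes(T)$; I would in fact establish the stronger assertion that \emph{every} cycle on $T$ is smash-nilpotent.

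Second, by Shermenev's theorem applied to the Jacobian (which identifies $\h^1(C_i)$ with $\h^1(J(C_i))$ via the Abel--Jacobi map), each $\h^1(C_i)$ is a negative Kimura motive, i.e., $\mathrm{Sym}^{2g_i+1}\h^1(C_i) = 0$ in $\MM_{\rm rat}$. The Schur-functor analysis for Kimura-finite motives---equivalently, the super-vector-space yoga in which negative motives behave like purely odd super-vector spaces---shows that an $n$-fold tensor product of negative Kimura motives is itself negative when $n$ is odd. Hence $\h^1(C_1)\otimes\cdots\otimes\h^1(C_n)$ is negative Kimura, and since direct summands of negative Kimura motives are negative Kimura, so is $T$: fix $N$ with $\mathrm{Sym}^{N+1}T = 0$.

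Finally, Kimura's symmetrization trick concludes. For any $\alpha \in \CH^i(T) = \mathrm{Hom}(\mathds{1}(-i), T)$, the morphism $\alpha^{\otimes (N+1)} \colon \mathds{1}(-i(N+1))\to T^{\otimes(N+1)}$ is $\Sy_{N+1}$-invariant, since the source carries the trivial $\Sy_{N+1}$-action and permuting identical tensor factors on the target reproduces $\alpha^{\otimes(N+1)}$. Consequently $\alpha^{\otimes(N+1)} = \mathrm{sym}_{N+1}\circ \alpha^{\otimes(N+1)}$, where $\mathrm{sym}_{N+1}:= \tfrac{1}{(N+1)!}\sum_{\sigma\in\Sy_{N+1}} \sigma$ is the symmetrization idempotent defining $\mathrm{Sym}^{N+1}T$. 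But $\mathrm{sym}_{N+1}=0$ in $\mathrm{End}(T^{\otimes(N+1)})$ by construction, so $\alpha^{\otimes(N+1)} = 0$ and $\alpha$ is smash-nilpotent. The main obstacle is the Kimura-negativity of an $n$-fold tensor product of negative motives for $n$ odd---a Schur-functor statement that is classical but requires some care---and this is precisely where the odd-dimensionality hypothesis is used; once it is in hand, the smash-nilpotence follows for free from the one-line argument above.
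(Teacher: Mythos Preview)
The statement you were asked to prove is Conjecture~\ref{voe}, Voevodsky's smash-nilpotence conjecture for an \emph{arbitrary} smooth projective variety. The paper does not prove this statement; it is stated as an open conjecture, and there is no proof in the paper to compare against. Your argument does not prove it either: the very first line invokes Claim~\ref{key2}, which applies only to Cynk--Hulek and Schreieder varieties, and you explicitly use the odd-dimensionality hypothesis. What you have written is a proof of Proposition~\ref{conjVoe}, not of Conjecture~\ref{voe}.

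Regarded as a proof of Proposition~\ref{conjVoe}, your argument is essentially the same as the paper's. Both reduce via Claim~\ref{key2} to the summand $T$, observe that $T$ sits inside an oddly finite-dimensional (Kimura-negative) motive because $n$ is odd, and conclude by Kimura's result \cite[Proposition~6.1]{Kim} that every cycle on such a motive is smash-nilpotent. You spell out Kimura's symmetrization argument explicitly rather than citing it, which is fine. One small imprecision: you assert via Corollary~\ref{C:CH} that $T$ is a direct summand of $\h^1(C_1)\otimes\cdots\otimes\h^1(C_n)$, but Corollary~\ref{C:CH} computes the motive of $\bar X$, not of $X$; Claim~\ref{key2} only gives $T$ as a summand of $\h(C_1\times\cdots\times C_n)$. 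The paper bridges this by invoking Kimura finite-dimensionality to pass to a summand of $(C_1\times\cdots\times C_n,\pi^n,0)$, and then notes this whole motive is oddly finite-dimensional (each K\"unneth summand $\h^{i_1}(C_1)\otimes\cdots\otimes\h^{i_n}(C_n)$ with $i_1+\cdots+i_n=n$ odd has an odd number of $\h^1$ factors). Your conclusion is unaffected, but the justification should go through $\h^n$ rather than $\h^1\otimes\cdots\otimes\h^1$.
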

	
	\begin{remark} It is known \cite[Th\'eor\`eme 3.33]{An} that Conjecture
		\ref{voe} for all smooth projective varieties implies (and is strictly
		stronger
		than) Kimura's conjecture ``all smooth projective varieties have
		finite-dimensional motive'' \cite{Kim}.
	\end{remark}

	Thanks to Claim \ref{key2}, we can verify Voevodsky's
	conjecture for odd-dimensional Cynk--Hulek varieties and Schreieder
	varieties\,:

	\begin{proposition}\label{conjVoe} Let $X$ be a Cynk--Hulek Calabi--Yau variety
		as in Theorem \ref{ch} or \ref{ch3}, or a Schreieder variety as in Theorem
		\ref{schreieder}. Suppose the dimension $n$ of $X$ is odd.
		Then
		\[  \CH^i_{num}(X)\ \subseteq\ \CH^i_\otimes(X)\ \ \ \hbox{for\ all\ }i.\]
	\end{proposition}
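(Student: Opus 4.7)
The approach is to apply Claim \ref{key2} to decompose $\h(X)$ into Lefschetz motives --- where the statement is tautological --- plus a summand $T$ that is a direct summand of an abelian variety motive and has cohomology concentrated in odd degree, and then to deduce that every cycle on $T$ is smash-nilpotent from the fact that $T$ is Kimura-oddly finite-dimensional. More precisely, Claim \ref{key2} gives $\h(X) = T \oplus \bigoplus \mathds{1}(\ast)$ in $\MM_{\rm rat}$, where $T$ is a direct summand of the Chow motive of an abelian variety $A$ --- namely $E_1\times\cdots\times E_n$ (resp.\ $E^n$) in the Cynk--Hulek case, and, via the Abel--Jacobi embedding of $C$ into its Jacobian, $J(C)^n$ in the Schreieder case --- with $H^j(T) = 0$ for $j \neq n$. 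The decomposition splits each $\CH^i(X)$, and on each Lefschetz summand $\mathds{1}(\ast)$ the rational Chow groups are either $\QQ$ or zero, so numerical equivalence is trivial and the inclusion is automatic. Since $n$ is odd, $H^{2i}(T) = 0$ for all $i$, so every cycle in $\CH^i(T)$ is already homologically (hence numerically) trivial, and it suffices to prove that \emph{every} cycle in $\CH^\ast(T)$ is smash-nilpotent.

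The key point is that $T$ is Kimura-oddly finite-dimensional. Since $\h(A)$ is Kimura-finite by Shermenev's theorem, and since, by Kimura's nilpotency theorem, any direct summand of a Kimura-finite motive with vanishing cohomology must itself vanish, the concentration of the cohomology of $T$ in degree $n$ forces $T$ to be a direct summand of $\h^n(A)$. Writing $A = \prod_i A_i$ with each $\h^{k}(A_i)$ being Kimura-odd for $k$ odd and Kimura-even for $k$ even (again by Shermenev), we have $\h^n(A) = \bigoplus_{k_1 + \cdots + k_d = n} \bigotimes_i \h^{k_i}(A_i)$. As $n$ is odd, each summand involves an odd number of Kimura-odd factors, so each summand is itself Kimura-odd. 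Hence $T$ is Kimura-odd: there exists an integer $m$ with $\mathrm{Sym}^{m+1} T = 0$ in $\MM_{\rm rat}$, where $\mathrm{Sym}^N T$ denotes the image in $T^{\otimes N}$ of the symmetrizer $\frac{1}{N!} \sum_{\sigma \in \Sy_N} \sigma$.

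To conclude, any $\alpha \in \CH^i(T) = \mathrm{Hom}_{\MM_{\rm rat}}(\mathds{1}, T(i))$ gives a tensor power $\alpha^{\otimes N} : \mathds{1} \to T^{\otimes N}(Ni)$ which, being $\Sy_N$-invariant, factors through $\mathrm{Sym}^N T(Ni)$; for $N > m$ this vanishes, so $\alpha^{\otimes N} = 0$ and $\alpha$ is smash-nilpotent. The main technical hurdle is the verification of Kimura-oddness of $T$, which requires combining Shermenev's finite-dimensionality of abelian variety motives with parity bookkeeping in the K\"unneth decomposition of $\h(A)$; once that is established, the smash-nilpotence follows formally from the $\Sy_N$-invariance of $\alpha^{\otimes N}$.
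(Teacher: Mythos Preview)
Your proof is correct and follows essentially the same approach as the paper: decompose $\h(X)$ via Claim~\ref{key2}, reduce to the summand $T$, argue that $T$ is Kimura-odd because its cohomology sits in odd degree $n$ and it is a direct summand of a Kimura-finite motive, and then conclude smash-nilpotence of all cycles on $T$. The paper works directly with the product of curves $C_1\times\cdots\times C_n$ (noting $(C_1\times\cdots\times C_n,\pi^n,0)$ is oddly finite-dimensional and citing \cite[Proposition~6.1]{Kim}), whereas you take the small extra step of passing to the Jacobian and spell out the symmetrizer argument, but these are cosmetic differences.
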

	
	\begin{proof} 
		According to Claim \ref{key2}, we have a decomposition
		\[ \h(X) = T\oplus \bigoplus {\mathds{1}}(\ast), \]	
		with $H^j(T)=0$ for $j\not= n$, and $T$ isomorphic to a direct summand of
		$\h(C_1\times\cdots\times C_n)$. Here, the $C_i$ are elliptic curves in case $X$
		is a Cynk--Hulek variety, and the hyperelliptic curves of \S\ref{sec:Cg} in case
		$X$ is a Schreieder variety.

		By Kimura finite-dimensionality, $T$ is isomorphic to a direct summand of the
		motive
		$(C_1\times\cdots\times C_n,\pi^n,0)$, where $\pi^n$ is any Chow--K\"unneth
		projector on the degree-$n$ cohomology.		
		But the Chow motive $(C_1\times\cdots\times C_n,\pi^n,0)$ is oddly
		finite-dimensional (in the sense of \cite{Kim}).
		Hence, together with the fact that $\CH^{i}_{num}({X})=
		\CH^i_{num}(T)$, the corollary is implied by the fact that $\CH^i_{}(M)
		\subseteq \CH^i_\otimes(M)$ for all $i$ and for all oddly finite-dimensional
		Chow motives $M$  (this is due to Kimura
		\cite[Proposition 6.1]{Kim}, and is also used in \cite{KSeb}). 
	\end{proof}

	\subsection{Supersingularity} 
	
	The construction of the Cynk--Hulek Calabi--Yau varieties also makes sense in
	positive characteristic $\ge 5$. In this final section, we present
	supersingular
	Calabi--Yau varieties for which the motive behaves in stark contrast to the
	characteristic zero case\,:
	
	\begin{proposition}\label{ss} Let $k$ be an algebraically closed field of
		characteristic $\ge 5$. Let $X$ be a Calabi--Yau variety over $k$ obtained as
		in
		Theorem \ref{ch} or \ref{ch3}, where the elliptic curves are assumed to be
		supersingular. Assume $X$ is even-dimensional. Then the Chow motive of $X$ is
		isomorphic to a direct sum of Lefschetz motives. Consequently, the cycle class
		map to $\ell$-adic cohomology induces
		isomorphisms
		\[  \CH^i(X)_{\QQ_\ell}\ \xrightarrow{\cong}\ H^{2i}(X, \QQ_\ell(i))\ \ \
		\forall i\ \]
		(where $\ell$ is a prime different from $\hbox{char}(k)$). 
	\end{proposition}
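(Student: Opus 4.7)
The plan is to combine the motivic decomposition of $X$ provided by Claim~\ref{key2} with an analysis of the motive of a power of a supersingular elliptic curve. By Claim~\ref{key2}, write $\h(X) = T \oplus \bigoplus \mathds{1}(\ast)$, where $T$ is concentrated in cohomological degree $n$ and is a direct summand of $\h(E_1\times\cdots\times E_n)$ (resp.\ $\h(E^n)$). It therefore suffices to prove that the $n$-th Chow--K\"unneth summand of $\h(E_1\times\cdots\times E_n)$ is a direct sum of Lefschetz motives.

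Using the standard decomposition $\h(E_i) = \mathds{1}\oplus\h^1(E_i)\oplus\mathds{1}(-1)$, the $n$-th Chow--K\"unneth summand expands as
\[ \h^n(E_1\times\cdots\times E_n) = \bigoplus_{\substack{(i_1,\ldots,i_n)\in\{0,1,2\}^n\\ i_1+\cdots+i_n = n}} \h^{i_1}(E_1)\otimes\cdots\otimes\h^{i_n}(E_n). \]
In each summand, letting $k$ denote the number of indices $j$ with $i_j=1$, one has $k\equiv n\pmod 2$, so $k$ is \emph{even} since $n$ is. Every such summand is thus of the form $\h^1(E_{j_1})\otimes\cdots\otimes\h^1(E_{j_k})\otimes\mathds{1}(\ast)$ with $k$ even.

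The main step is to establish the Chow-motivic isomorphism
\[ \h^1(E)\otimes\h^1(E') \cong \mathds{1}(-1)^{4} \quad \text{in } \MM_{\mathrm{rat}}, \]
for any pair of supersingular elliptic curves $E,E'$ over $k$. Here $E\times E'$ is a supersingular abelian surface, so its N\'eron--Severi group has maximal rank $b_2 = 6$\,; in particular the subspace $H^1(E,\QQ_\ell)\otimes H^1(E',\QQ_\ell) \subseteq H^2(E\times E',\QQ_\ell)$, of dimension $4$, is spanned by algebraic classes---concretely, by the graphs of the four linearly independent elements of $\mathrm{Hom}(E,E')\otimes\QQ$. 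Choosing such a basis produces a morphism $F\colon \mathds{1}(-1)^{4} \to \h^1(E)\otimes\h^1(E')$ of Chow motives whose $\ell$-adic realization is an isomorphism. Since abelian varieties have finite-dimensional Chow motive in the sense of Kimura (Shermenev--K\"unnemann), $F$ is itself an isomorphism.

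Pairing up the $\h^1$-factors in each summand of $\h^n(E_1\times\cdots\times E_n)$ via this isomorphism shows that $T$, and hence $\h(X)$, decomposes as a direct sum of Lefschetz motives. The consequence on the cycle class map is then immediate\,: a decomposition $\h(X)\cong\bigoplus \mathds{1}(-n_j)$ induces compatible direct sum decompositions of $\CH^\ast(X)_{\QQ_\ell}$ and $H^{2\ast}(X,\QQ_\ell(\ast))$ under which the cycle class map is the identity. The main obstacle I anticipate is the finite-dimensionality step linking an $\ell$-adic isomorphism to a Chow-motivic isomorphism\,; alternatively, one can invoke directly Fakhruddin's theorem that rational and numerical equivalence coincide on supersingular abelian varieties over $\overline{\mathbb{F}_p}$.
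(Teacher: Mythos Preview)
Your proposal is correct and follows essentially the same route as the paper's proof: both invoke Claim~\ref{key2} to reduce to the $n$-th Chow--K\"unneth summand of a product of supersingular elliptic curves, establish the key isomorphism $\h^1(E)\otimes\h^1(E')\cong\mathds{1}(-1)^{\oplus 4}$ using that $\mathrm{Hom}(E,E')\otimes\QQ$ is $4$-dimensional together with Kimura finite-dimensionality, and then pair up the even number of $\h^1$-factors. The only cosmetic difference is that the paper first replaces all the $E_i$ by a single $E$ (using that supersingular elliptic curves form one isogeny class), whereas you work directly with pairs $(E,E')$; the paper also makes explicit the use of finite-dimensionality to pass from ``$T$ is a summand of $\h(E_1\times\cdots\times E_n)$ with cohomology in degree $n$'' to ``$T$ is a summand of $\h^n(E_1\times\cdots\times E_n)$'', which you leave implicit.
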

	
	\begin{proof} 
		First of all, we observe that the construction of the smooth projective
		Calabi--Yau varieties of Cynk--Hulek carries over to characteristic $\ge 5$.
		Using Claim \ref{key2}, we have a decomposition
		\[ \h(X) = T\oplus \bigoplus {\mathds{1}}(\ast), \]	
		with $H^j(T)=0$ for $j\not= n$, and $T$ isomorphic to a direct summand of
		$\h(E_1\times\cdots\times E_n)$.
		By Kimura finite-dimensionality, $T$ is isomorphic to a direct summand of the
		motive
		$(E_1\times\cdots\times E_n,\pi^n,0)$, where $\pi^n$ is any Chow--K\"unneth
		projector on the degree-$n$ cohomology.	
		Therefore, by Kimura finite-dimensionality again, one is reduced to proving that
		$(E_1\times\cdots\times E_n,\pi^n,0)$ is isomorphic to a direct sum of Lefschetz
		motives when the elliptic curves $E_1,\ldots, E_n$ are supersingular.
		
		Recall that there is only one isogeny class of supersingular elliptic curve\,;
		let us denote it by $E$. 
		We endow $E$ with its canonical Chow--K\"unneth decomposition, namely the one
		given by $\h^i(E) = (E,\pi_E^i,0)$ with
		\[\pi_E^0 := 0_E\times E, \quad \pi_E^2 = E\times 0_E, \quad \pi^1_E = \Delta_E
		- \pi_E^0 - \pi_E^2.\]		
		Since $E$ is supersingular, we have that
		$\mathrm{End}(\h^1(E))=\mathrm{End}(E)\otimes \QQ$ is $4$-dimensional. This
		implies that 
		\begin{equation}\label{eq:supersingular}
		\h^1(E) \otimes \h^1(E) \simeq \mathds{1}(-1)^{\oplus 4}.
		\end{equation}
		% Now $T$ is isomorphic to a direct summand of the motive $(E^n,
		%\pi_{E^n}^n,0)$, where
		Now $\pi_{E^n}^n$ can be chosen as follows\,:
		$$\pi_{E^n}^n = \sum_{i_1+\cdots+i_n = n} \pi_E^{i_1}\otimes \cdots \otimes
		\pi_E^{i_n}.$$ But for $i_1+\cdots+i_n $ even, 
		$(E^n,\pi_E^{i_1}\otimes \cdots \otimes \pi_E^{i_n},0) =
		\h^{i_1}(E)\otimes\cdots \otimes \h^{i_n}(E)$ is isomorphic to a direct sum of
		Lefschetz motives thanks to \eqref{eq:supersingular}. Consequently, for $n$
		even, $(E^n, \pi_{E^n}^n,0)$ is isomorphic to a direct sum of Lefschetz motives,
		thereby establishing the proposition. 
	\end{proof}

	\begin{remark} In dimension $n=2$, Proposition \ref{ss} follows from the
		general
		result that supersingular K3 surfaces are unirational \cite{Lied}. For the
		proof of Proposition~\ref{ss}, we could alternatively have used the description
		of the Chow groups of supersingular abelian varieties given by Fakhruddin
		\cite{Fak}. Finally, in case the ground field $k$ is finite, the fact that the
		cycle class map to $\ell$-adic cohomology induces
		isomorphisms
		$\CH^i(X)_{\QQ_\ell}\ \xrightarrow{\cong}\ H^{2i}(X, \QQ_\ell(i))$ for all $i$
		and for all Cynk--Hulek Calabi--Yau varieties is true without the
		assumption
		of supersingularity and also without the restriction on the parity of the
		dimension, as follows from \cite{Kahn} (but beware that the Chow motive is then
		not necessarily isomorphic to a direct sum of Lefschetz motives, even in the
		even-dimensional case).
	\end{remark}

	\vskip1cm

\end{document}